\newtheorem{corollary}{Corollary}
\newtheorem{lemma}[corollary]{Lemma}
\newtheorem{lemma*}[lem6]{Lemma}
\newtheorem{proposition}[corollary]{Proposition}
\newtheorem{theorem}[corollary]{Theorem}
\newtheorem{example}[corollary]{Example}
\newtheorem{question}[corollary]{Question}
\newtheorem{conjecture}[corollary]{Conjecture}
\newcommand{\defeq}{\vcentcolon=}
\begin{document}

\AtEndDocument{%
  \par
  \medskip
  \begin{tabular}{@{}l@{}}%
    \textsc{Thomás Jung Spier} \\
    \textsc{Dept. of Combinatorics and Optimization} \\ 
    \textsc{University of Waterloo, Canada} \\
    \textit{E-mail address}: \texttt{tjungspier@uwaterloo.ca}
  \end{tabular}}

\title{Efficient reconstruction of the characteristic polynomial}
\author{Thomás Jung Spier
\footnote{tjungspier@uwaterloo.ca}}
\date{\today}
\maketitle
\author
\vspace{-0.8cm}

\begin{abstract} 
    The polynomial reconstruction problem, introduced by Cvetković in 1973, asks whether the characteristic polynomial $\phi^G$ of a graph $G$ with at least $3$ vertices can be reconstructed from the polynomial deck $\{\phi^{G \setminus i}\}_{i \in V(G)}$. In this work, we prove that $\phi^G \pmod{4}$ can be reconstructed from the polynomial deck if the number of vertices in $G$ is even or if the rank of the walk matrix of $G$ over $\Fds_2$ is less than $\lceil n/2 \rceil$. We also prove that for every graph $G$, $\phi^{\overline{G}}\pmod{4}$ can be computed from $\phi^G\pmod{4}$, strengthening a recent result by Ji, Tang, Wang and Zhang. Finally, Hagos showed that the pair of characteristic polynomials $(\phi^G, \phi^{\overline{G}})$ is reconstructible from the generalized polynomial deck $\{(\phi^{G \setminus i}, \phi^{\overline{G} \setminus i})\}_{i \in V(G)}$. We also present an efficient version of this result that requires less information.
\end{abstract}

\begin{center}
\textbf{Keywords}
characteristic polynomial ; reconstruction ; controllable
\end{center}

%%%%%%%%%%%%%%%%%%%%%%%%%%%%%%%%%%%%%%%%%%%%%%%%%%%%%%%%%%%%%%%%%%%%%%%%%%%%%%%%

\section{Introduction}\label{sec:introduction}

The well-known Ulam-Kelly reconstruction conjecture~\cite{kelly1942isometric} asks whether every graph $G$ with at least three vertices can be uniquely determined from its deck, which is the multiset of its vertex-deleted subgraphs $\{G \setminus i\}_{i \in V(G)}$. As early as 1973, Cvetković (see \cite[p. 69]{cvetkovic1988recent}, \cite{gutman1975reconstruction}) posed the following spectral analogue of Ulam-Kelly's reconstruction conjecture.

To state this problem, let $G$ be a graph with adjacency matrix $A$. The characteristic polynomial of $G$ is defined as $\phi^G(x) \coloneqq \det(xI - A)$. We assume that $G$ has $n$ vertices, with its vertex set indexed by $[n]$. The polynomial deck of $G$ is then defined as the multiset of characteristic polynomials of its vertex-deleted subgraphs $\{\phi^{G \setminus i}\}_{i \in [n]}$.

\begin{question}[Polynomial reconstruction problem~\cite{cvetkovic1988recent}]\label{question:polynomial_reconstruction} Given two graphs $G$ and $H$ with at least $3$ vertices, is it true that if they have the same polynomial deck, then $\phi^G=\phi^H$?
\end{question}

The assumption that the graph has at least three vertices arises from the fact that the two graphs with two vertices have identical polynomial decks, yet their characteristic polynomials differ by $1$. 

The following well-known result is part of the motivation for Question~\ref{sec:questions} and shows that the question is actually equivalent to the reconstruction of the constant coefficient of the characteristic polynomial from the polynomial deck.

\begin{theorem}[Theorem 1.5 in~\cite{GodsilAlgebraicCombinatorics}]\label{thm:charac_derivative} Let $G$ be a graph. Then,
\[\partial_x\phi^G(x) = \displaystyle\sum_{i\in [n]}\phi^{G\setminus i}(x).\]
\end{theorem}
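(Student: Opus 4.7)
The plan is to differentiate $\phi^G(x) = \det(xI - A)$ directly, using the multilinearity of the determinant in its rows (equivalently, Jacobi's formula for the derivative of a determinant). Writing $r_i(x)$ for the $i$-th row of $xI - A$, one has $r_i'(x) = e_i$, so by the product rule applied via multilinearity,
$$\partial_x \det(xI - A) = \sum_{i=1}^n \det\bigl[r_1(x), \ldots, r_{i-1}(x), e_i, r_{i+1}(x), \ldots, r_n(x)\bigr].$$

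Next, I would expand the $i$-th summand along its $i$-th row. Since that row is the standard basis vector $e_i$, only the $(i,i)$-cofactor contributes, and it equals the determinant of the matrix obtained from $xI - A$ by deleting row and column $i$. This in turn equals $\det(xI_{n-1} - A[\hat{\imath}])$, where $A[\hat{\imath}]$ denotes the principal submatrix of $A$ indexed by $[n] \setminus \{i\}$. Because $A[\hat{\imath}]$ is precisely the adjacency matrix of $G \setminus i$, the $i$-th summand is $\phi^{G \setminus i}(x)$, and summing yields the claimed identity.

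There is no real obstacle: the argument is essentially Jacobi's formula $\tfrac{d}{dx}\det M(x) = \operatorname{tr}(\operatorname{adj}(M(x))\, M'(x))$ specialized to $M(x) = xI - A$, after observing that the diagonal entries of $\operatorname{adj}(xI - A)$ are the characteristic polynomials $\phi^{G \setminus i}(x)$. The only point to verify carefully is that deleting the $i$-th row and column of $xI - A$ gives exactly $xI_{n-1}$ minus the adjacency matrix of $G \setminus i$, which is immediate from the definition of vertex deletion.
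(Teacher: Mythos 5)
Your proof is correct and complete. The paper does not prove this statement itself but cites it as Theorem 1.5 of Godsil's \emph{Algebraic Combinatorics}; your argument via multilinearity of the determinant (equivalently Jacobi's formula), expanding the $i$-th summand along the row $e_i$ to produce the principal cofactor $\det(xI_{n-1}-A[\hat{\imath}]) = \phi^{G\setminus i}(x)$, is precisely the standard proof given in that reference.
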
 

After 50 years, no counterexample has been found for Question~\ref{question:polynomial_reconstruction}. Schwenk ~\cite{schwenk1979spectral} suspected that this problem had a negative solution, but that counterexamples would be difficult to find. Question~\ref{question:polynomial_reconstruction} is known to have a positive solution for polynomial decks of graphs with at most 10 vertices, regular graphs, trees~\cite{cvetkovic1998seeking}, and unicyclic graphs~\cite{simic2007polynomial}. Additionally, partial progress has been made for polynomial decks of other graph families, such as bipartite~\cite{gutman1975reconstruction} and disconnected graphs~\cite{sciriha2001polynomial} (or see~\cite{farrugia2021graphs}). For a comprehensive overview of the known results on Question~\ref{question:polynomial_reconstruction}, we recommend the article by Sciriha and Stani{\'c}~\cite{sciriha2023polynomial}.

If we seek additional information beyond the polynomial deck that guarantees the reconstruction of the characteristic polynomial, the earliest result dates back to 1979, when Tutte~\cite{tutte1979all} proved that the characteristic polynomial $\phi^G$ is reconstructible from the deck of $G$. To prove this, he showed that the total number of Hamiltonian cycles of $G$ is reconstructible from its deck and used this to obtain $\phi^G$. In the same article, Tutte proved that if $\phi^G$ is irreducible, then $G$ can be reconstructed from its deck.

The following stronger result, which uses less additional information to determine the characteristic polynomial, was obtained in 2000 by Hagos~\cite{hagos2000characteristic}. To state the result, we denote by $\overline{G}$ the complement of the graph $G$, and by $\overline{A}$ its adjacency matrix. Note that if $J$ is the all-ones matrix, then $J = I + A + \overline{A}$. The generalized polynomial deck of $G$ is the multiset of pairs of characteristic polynomials $\{(\phi^{G \setminus i}, \phi^{\overline{G} \setminus i})\}_{i \in [n]}$.

\begin{theorem}[Hagos~\cite{hagos2000characteristic}]\label{thm:hagos} Given two graphs $G$
and $H$, if they have the same generalized polynomial deck, then $(\phi^G, \phi^{\overline{G}})=(\phi^H, \phi^{\overline{H}})$.
\end{theorem}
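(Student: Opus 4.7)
The plan is to combine Theorem~\ref{thm:charac_derivative} with a classical identity relating $\phi^G$ and $\phi^{\overline G}$. \textbf{First}, I would apply Theorem~\ref{thm:charac_derivative} to both $G$ and $\overline{G}$: the sums $\sum_i \phi^{G\setminus i}$ and $\sum_i \phi^{\overline G\setminus i}$ are computable from the generalized polynomial deck and equal $\phi^{G\prime}$ and $\phi^{\overline G\prime}$, so both characteristic polynomials are determined up to additive constants $a:=\phi^G(0)$ and $b:=\phi^{\overline G}(0)$; the task reduces to pinning these constants down.

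\textbf{Second}, I would derive the complement identity using $\overline{A}=J-I-A$ with $J=\mathbf{1}\mathbf{1}^{\mathsf T}$ and the matrix determinant lemma, obtaining
\[
(-1)^n\phi^{\overline G}(-x-1)=\phi^G(x)+\Phi_G(x),\qquad \Phi_G(x):=\mathbf{1}^{\mathsf T}\mathrm{adj}(xI-A)\mathbf{1}.
\]
Here $\Phi_G$ has degree $n-1$ with leading coefficient $n$. I would then show that $\Phi_G$ itself is recoverable from the generalized deck up to a constant: each pair $(\phi^{G\setminus i},\phi^{\overline G\setminus i})$ determines $\Phi_{G\setminus i}$ via the same identity applied on $n-1$ vertices, and an analogue of Theorem~\ref{thm:charac_derivative} for $\Phi_G$ --- namely $\partial_x\Phi_G(x)=\sum_i\Phi_{G\setminus i}(x)$, proved by writing $\Phi_G(x)=\phi^G(x)-\det(xI-A-J)$ and differentiating --- integrates to recover $\Phi_G$ up to a constant.

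\textbf{Third}, evaluating the complement identity at $x=0$ yields a linear relation between $a$, $b$, and $\Phi_G(0)$; iterating the identity gives the self-duality $\Phi_G(x)=(-1)^{n+1}\Phi_{\overline G}(-x-1)$, and applying the complement identity to $\overline G$ gives further relations. I would combine these into a small linear system that determines the unknown constants and hence recovers $(\phi^G,\phi^{\overline G})$.

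\textbf{The main obstacle}, I expect, is that these complement identities contain a built-in linear dependency: the three natural equations yield only two independent constraints on the four unknowns $a$, $b$, $\Phi_G(0)$, $\Phi_{\overline G}(0)$. To close the gap one must exploit the pair structure of the generalized deck beyond its marginal sums --- for instance by considering a symmetric function of the pairs such as $\sum_i \phi^{G\setminus i}(x)\phi^{\overline G\setminus i}(y)$, or by using that the multiset of individual polynomials $\Phi_{G\setminus i}$ constrains $\Phi_G(0)$ via a Kelly-type reconstruction. Making this last step rigorous is the crux of the argument.
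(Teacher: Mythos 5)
Your first two steps are sound, and your misgiving in the final paragraph is exactly right: the approach as stated cannot close. The underlying reason is that every relation you propose is derived from the single complement identity $(-1)^n\phi^{\overline G}(-x-1)=\phi^G(x)+\Phi_G(x)$ together with its image under the involution $G\leftrightarrow\overline G$, $x\mapsto -x-1$; your ``self-duality'' $\Phi_G(x)=(-1)^{n+1}\Phi_{\overline G}(-x-1)$ is precisely the statement that this involution fixes the identity, so it contributes no new constraint, and the resulting linear system on $(a,b,\Phi_G(0),\Phi_{\overline G}(0))$ has rank $2$, leaving a one-parameter family of candidates. More fundamentally, every quantity you use --- $\sum_i\phi^{G\setminus i}$, $\sum_i\phi^{\overline G\setminus i}$, $\Phi_G$ and $\Phi_{\overline G}$ up to constants --- depends only on the two \emph{marginal} multisets $\{\phi^{G\setminus i}\}_i$ and $\{\phi^{\overline G\setminus i}\}_i$ separately. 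Nowhere do you use that these are paired vertex by vertex, and that pairing is exactly the extra information in the generalized deck.

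The missing ingredient --- the one Hagos's proof and this paper both hinge on --- is the Godsil--McKay walk identity (Theorem~\ref{thm:godsil_mckay}):
\[
w^G(x)=w^{G\setminus i}(x)+\frac{w^G_i(x)^2}{w^G_{i,i}(x)}.
\]
This is where the pairing enters: $w^{G\setminus i}$ is computed from the pair $(\phi^{G\setminus i},\phi^{\overline G\setminus i})$ via Equation~\eqref{eq:all_walks}, while $w^G_{i,i}=\phi^{G\setminus i}/\phi^G$ via Equation~\eqref{eq:closed_walk}, so the identity determines (many coefficients of) $w^G_i(x)^2$, hence of $w^G_i(x)$ itself since its leading coefficient is $1$. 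The second crucial mechanism, Equation~\eqref{eq:main_mechanism}, namely $\sum_{i}(e_i^TA^k\mathbf 1)(e_i^TA^l\mathbf 1)=\mathbf 1^TA^{k+l}\mathbf 1$, then converts $n$ short row-expansions $w^G_i$ into a roughly twice-as-long expansion of $w^G$, and the linear system of Proposition~\ref{prop:gen_basics_2} pins down the remaining constant coefficient of $\phi^G$ (and hence of $\phi^{\overline G}$). This is what Theorem~\ref{thm:main_general} in the paper carries out, and it both proves Theorem~\ref{thm:hagos} and makes it constructive. Your tentative idea of examining $\sum_i\phi^{G\setminus i}(x)\phi^{\overline G\setminus i}(y)$ is in the right spirit --- it, too, is a genuinely paired invariant --- but the walk-generating-function route is the one that actually closes the gap.
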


Contrary to Tutte's result, which can, in principle, lead to the computation of $\phi^G$ (albeit inefficiently, since it involves computing Hamiltonian cycles), it turns out that the proof of Theorem~\ref{thm:hagos} is not constructive. Therefore, the algorithm for obtaining $(\phi^G, \phi^{\overline{G}})$, which is implicit in Theorem~\ref{thm:hagos}, requires the computation of all possible characteristic polynomials and generalized polynomial decks for graphs with $n$ vertices. This algorithm is clearly inefficient, as the number of graphs on $n$ vertices is asymptotically $\dfrac{2^{\binom{n}{2}}}{n!}$ (see~\cite{flajolet2009analytic}).

Our first result addresses this issue by strengthening Theorem~\ref{thm:hagos} and providing an simple and efficient algorithm to reconstruct $(\phi^G, \phi^{\overline{G}})$ from the generalized polynomial deck. It turns out that our reconstruction algorithm requires less information than the generalized polynomial deck.

\begin{theorem}\label{thm:main_theorem} The pair of characteristic polynomials $(\phi^G, \phi^{\overline{G}})$ can be efficiently reconstructed from the generalized polynomial deck. The reconstruction algorithm only requires $\phi^{G \setminus i}$ and the top $\lceil \frac{n + 4}{2} \rceil$ coefficients of $\phi^{\overline{G} \setminus i}$ for every $i\in[n]$.
\end{theorem}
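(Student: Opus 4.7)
The plan is to introduce the auxiliary polynomial $R_G(y) \defeq e^{\top}\operatorname{adj}(yI - A)\,e$, where $e$ is the all-ones vector, and to observe that the matrix-determinant lemma applied to the rank-one perturbation $yI - \overline A = (y+1)I + A - ee^{\top}$ yields the complement identity
\[
\phi^{\overline G}(x) = (-1)^n\bigl[\phi^G(-1-x) + R_G(-1-x)\bigr].
\]
Thus reconstructing $(\phi^G, \phi^{\overline G})$ is equivalent to reconstructing the pair $(\phi^G, R_G)$, and the remaining task is to recover both from the given data.

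By Theorem~\ref{thm:charac_derivative} the $G$-deck $\{\phi^{G\setminus i}\}$ determines $\partial_x \phi^G$, hence every coefficient of $\phi^G$ except its constant term $c_0$; the same identity applied to only the top $\lceil(n+4)/2\rceil$ coefficients of each $\phi^{\overline G\setminus i}$ recovers the top $\lceil(n+4)/2\rceil$ coefficients of $\phi^{\overline G}$. Substituting into the complement identity, and using that $c_0$ contributes only to the constant coefficient of $\phi^G(-1-x)$, one reads off the top $\lceil(n+2)/2\rceil$ coefficients of $R_G$.

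To recover the remaining coefficients of $R_G$ together with $c_0$, I apply the complement identity to each vertex-deleted subgraph $G\setminus i$, using the fully known $\phi^{G\setminus i}$ and the top $\lceil(n+4)/2\rceil$ coefficients of $\phi^{\overline G\setminus i}$; this determines the top $\lceil(n+2)/2\rceil$ coefficients of every $R_{G\setminus i}$ individually. A short computation --- differentiating the complement identity in $x$, applying Theorem~\ref{thm:charac_derivative} to $\phi^{\overline G}$, substituting the complement identity for each $\phi^{\overline G\setminus i}$, and cancelling --- yields the vertex-deletion identity for the auxiliary polynomial:
\[
\partial_y R_G(y) = \sum_{i\in[n]} R_{G\setminus i}(y).
\]
Combining this identity with the individual (not merely summed) partial data on the $R_{G\setminus i}$'s, and iterating to second-order deletions via $\sum_{j\neq i} R_{G\setminus\{i,j\}}(y) = \partial_y R_{G\setminus i}(y)$ if needed, yields a linear system in the unknown bottom coefficients of $R_G$.

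The main technical obstacle will be showing that this system has full rank, so that the $\lfloor n/2\rfloor$ remaining unknowns (the bottom coefficients of $R_G$ and the constant $c_0$) are uniquely determined. A naive summation of the individual $R_{G\setminus i}$ data merely reproduces the top $\lceil(n+2)/2\rceil$ coefficients of $R_G$ already known, so invertibility must exploit the individuality of the family $\{R_{G\setminus i}\}_i$ together with the structural constraint that each $R_H$ comes from a $0/1$ symmetric matrix. Once $R_G$ is fully recovered, the complement identity delivers all remaining coefficients of $\phi^{\overline G}$ and determines $c_0$ from its constant-coefficient equation, producing an efficient reconstruction algorithm whose steps are all polynomial-time linear-algebraic operations on polynomials of degree $O(n)$.
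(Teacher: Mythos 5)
Your proposed route is essentially a change of variables: the auxiliary polynomial $R_G(y)=e^{\top}\operatorname{adj}(yI-A)e$ satisfies $R_G=\phi^G\cdot w^G$, so the ``complement identity'' you write is Equation~\eqref{eq:all_walks} of the paper in disguise, and the deletion identity $\partial_y R_G=\sum_i R_{G\setminus i}$ is a correct (and nice) restatement of the fact that $-\partial_y w^G=\sum_i (w^G_i)^2$ combined with Theorem~\ref{thm:godsil_mckay}. Up to the point of recovering the top $\lceil(n+2)/2\rceil$ coefficients of $R_G$ and of each $R_{G\setminus i}$, the argument is sound and parallel to the paper's.

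The gap is exactly where you flag it, and it is not a technicality but the entire content of the theorem. The deletion identity $\partial_y R_G=\sum_i R_{G\setminus i}$ is \emph{linear} in the deck data, and like the degree operator it is triangular: the top $k$ coefficients of the right-hand side determine only the top $k$ coefficients of $\partial_y R_G$, hence only the top $k+1$ coefficients of $R_G$, which you already knew. No amount of ``exploiting individuality'' within a linear identity will break past the midpoint, and the proposed iteration to $\sum_{j\neq i}R_{G\setminus\{i,j\}}=\partial_y R_{G\setminus i}$ requires second-order deck data $\{R_{G\setminus\{i,j\}}\}$ that the hypothesis does not supply. Appealing to ``the structural constraint that each $R_H$ comes from a $0/1$ symmetric matrix'' is precisely Hagos's non-constructive move, which the paper is explicitly trying to avoid.

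What the paper's proof actually does at this juncture is \emph{quadratic}, not linear, and that quadraticity is what doubles the reach. From Theorem~\ref{thm:godsil_mckay}, the first $s-1$ coefficients of each $w^G_i$ are extracted via a square root of $(w^G-w^{G\setminus i})\cdot w^G_{i,i}$, using $\phi^{G\setminus i}$ in full (something your proposal never touches). Then the identity $\sum_i (e_i^{\top}A^k\mathbf{1})(e_i^{\top}A^l\mathbf{1})=\mathbf{1}^{\top}A^{k+l}\mathbf{1}$ (Equation~\eqref{eq:main_mechanism}) turns $s-1$ known coefficients of each $w^G_i$ into $2s-3$ known coefficients of $w^G$ --- this sum of products is where the bottom-half information comes from. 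With $s=\lceil(n+4)/2\rceil$ one gets $2s-3\ge n+1$ coefficients, enough to solve Equation~\eqref{eq:determine_coeff} for $b_n$ since the walk matrix always has rank at least $1$. Your framework has no analogue of this squaring step; you would need to express $\sum_i R_{G\setminus i}^2$ (or similar) in terms of $R_G$, which is in fact what the Godsil--McKay identity encodes. Without it, the linear system you propose is rank-deficient and the proof does not close.
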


Although we do not explicitly determine the time complexity of our algorithm, it is clearly polynomial, as it only involves elementary operations with rational power series.

It turns out that for almost all graphs, our reconstruction algorithm can be made to require even less information. Recall that the walk matrix $W^G$ of a graph $G$ is defined by
\[
W^G\defeq(\mathbf{1}\quad A\mathbf{1}\quad \cdots\quad A^{n-1}\mathbf{1}),
\]
\noindent where $\mathbf{1}$ is the all $1$'s vector.

\begin{theorem}\label{thm:main_theorem_2} Assume that we know the top $ \lceil\frac{2n + 4}{3}\rceil $ coefficients of the pairs of polynomials $ (\phi^{G \setminus i}, \phi^{\overline{G} \setminus i}) $ for every $i$ in $[n]$. Then, it is possible to efficiently determine whether the rank of the walk matrix $W^G$ is at least $\lfloor\frac{n-1}{3}\rfloor$, and if so,  efficiently reconstruct $(\phi^G, \phi^{\overline{G}})$.
\end{theorem}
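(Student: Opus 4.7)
The strategy is to reduce the reconstruction to a linear-recurrence / Padé problem for the sequence of walk numbers $w_k := \mathbf{1}^T A^k \mathbf{1}$, with the walk-matrix rank hypothesis calibrating the threshold between ``small'' and ``large'' rank regimes.

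First, I would apply Theorem~\ref{thm:charac_derivative} to both $G$ and $\overline{G}$: summing the given top coefficients of the $\phi^{G \setminus i}$ (resp.\ $\phi^{\overline{G}\setminus i}$) and integrating (using that both polynomials are monic of known degree) recovers the top $k_G := \lceil (2n+4)/3 \rceil + 1$ coefficients of $\phi^G$ and of $\phi^{\overline{G}}$. Next I invoke the classical complement identity
\[
\phi^{\overline{G}}(x) \;=\; (-1)^n\,\phi^G(-1-x)\,\Bigl(1 + \sum_{k\geq 0} w_k\,(-1-x)^{-k-1}\Bigr),
\]
derived from the matrix-determinant lemma applied to $\overline{A} = J - I - A$ and viewed as an equality of formal Laurent series at infinity. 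Matching coefficients of negative powers of $x$ converts the known top $k_G$ coefficients of $\phi^G$ and $\phi^{\overline{G}}$ into the walk numbers $w_0, w_1, \ldots, w_M$ with $M \geq k_G - O(1)$.

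The crucial structural input is that $(w_k)_{k\geq 0}$ satisfies a linear recurrence of minimal order $r = \operatorname{rank} W^G$, whose characteristic polynomial $m(x)$ is the minimal polynomial of $A$ restricted to $\operatorname{span}\{A^i \mathbf{1}\}_{i \geq 0}$; in particular $m \mid \phi^G$, giving the factorization $\phi^G = \psi\, m$ with $\deg \psi = n - r$. Running the Berlekamp--Massey algorithm on $(w_0, \ldots, w_M)$ returns the length $L$ of the shortest recurrence fitting this prefix. The parameter $k_G$ is chosen so that $2T \leq M$ for $T := \lfloor (n-1)/3 \rfloor$; standard Berlekamp--Massey theory then yields the equivalence $L < T \iff r < T$, which realizes the rank test claimed in the theorem. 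For the reconstruction in the case $r \geq T$, I distinguish two sub-cases. When $2r \leq M$, Berlekamp--Massey returns the genuine $m$; the threshold $r \geq T$ is precisely chosen so that $\deg \psi \leq k_G - 1$, and polynomial long division of the known top $k_G$ coefficients of $\phi^G$ by the recovered $m$ determines $\psi$ completely, giving $\phi^G = \psi\, m$ and then $\phi^{\overline{G}}$ from the complement identity. When $2r > M$, Berlekamp--Massey falls short of $m$; but in this regime $\deg \psi = n - r$ is small, so by passing to the reversed polynomials $a(x) := x^n \phi^G(1/x)$ and $b(x) := x^{n-1}\Delta(1/x)$ (where $\Delta := (-1)^n \phi^{\overline{G}}(-1-\cdot) - \phi^G$), the reversed common factor $\psi^*(x) = x^{n-r}\psi(1/x)$ is a low-degree common divisor of $a$ and $b$ that can be extracted from the truncations $a \bmod x^{k_G}$ and $b \bmod x^{k_G}$ via an approximate-GCD / dual-Padé computation; once $\psi$ is found, the reconstruction concludes as before.

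The main obstacle I expect is the second sub-case: verifying that the truncated common factor $\psi^*$ is uniquely determined and efficiently computable from the known truncations, and checking that the two sub-cases together tile the entire range $r \geq T$ without a gap. This is where the specific constants $2/3$ in $k_G$ and $1/3$ in $T$ enter; the delicate balance between the truncation precision $k_G$ and the two target degrees $\deg m$ and $\deg \psi$ is what makes the arithmetic tight, with no slack on either side.
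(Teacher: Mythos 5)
Your recasting of the problem in terms of linear recurrences for the walk numbers $w_k=\mathbf{1}^T A^k\mathbf{1}$ is the right starting point, and the Hankel/Berlekamp--Massey rank test you describe does work in isolation. The reconstruction step, however, has a genuine gap: after applying Theorem~\ref{thm:charac_derivative}, you keep only the top $\lceil(2n+4)/3\rceil$ coefficients of $\phi^G$ and $\phi^{\overline{G}}$ and discard the individual vertex-deleted polynomials entirely. From these aggregates alone you can extract only $w_0,\dots,w_M$ with $M\approx(2n-2)/3$, which is too few to pin down the recurrence when the rank $r$ of $W^G$ is large. Your Berlekamp--Massey sub-case recovers the minimal recurrence only for $r\leq(M+1)/2\approx(n-1)/3$, a window of width $O(1)$ above the threshold $\lfloor(n-1)/3\rfloor$; everything else is delegated to the truncated-GCD fallback, which is not well posed --- the GCD of two polynomials is not a function of their truncations (generic perturbations of the unknown low-order tails make them coprime), and you invoke no constraint that restores uniqueness. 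You flag this as the main obstacle yourself, and it is fatal.

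The paper's proof of this statement (via Theorem~\ref{thm:main_general}) fills exactly this gap with a per-vertex doubling mechanism absent from your proposal. Rewriting the Godsil--McKay identity (Theorem~\ref{thm:godsil_mckay}) as
\[
w^G_i(x)^2 = \bigl(w^G(x)-w^{G\setminus i}(x)\bigr)\,w^G_{i,i}(x)
\]
and using the known truncations of $w^G$, of each $w^{G\setminus i}$, and of each $w^G_{i,i}$ yields the first $\approx s-1$ coefficients of every $w^G_i(x)$, i.e.\ $e_i^TA^k\mathbf{1}$ for $k\leq s-2$ and all $i\in[n]$. Then the identity
\[
\sum_{i\in[n]}\bigl(e_i^T A^k\mathbf{1}\bigr)\bigl(e_i^T A^l\mathbf{1}\bigr)=\mathbf{1}^T A^{k+l}\mathbf{1}
\]
(Equation~\eqref{eq:main_mechanism}) converts this into knowledge of $w_0,\dots,w_{2s-4}$, roughly doubling the reach to about $(4n-4)/3$ walk numbers. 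These feed the Hankel linear system of Equation~\eqref{eq:determine_coeff}, which by Propositions~\ref{prop:gen_basics_3} and~\ref{prop:gen_basics_4} is nonsingular exactly when the rank of $W^G$ is at least $n+1-t$, and then yields the missing coefficients $b_t,\dots,b_n$ directly. This doubling step, driven by the per-vertex deck, is the single indispensable idea your proposal is missing.
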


It was proved by O'Rourke and Touri~\cite{o2016conjecture}, confirming a conjecture by Godsil~\cite{godsil2012controllable}, that almost all graphs are controllable, meaning that the walk matrix $W^G$ has full rank. As a consequence, Theorem~\ref{thm:main_theorem_2} applies to almost all graphs. 

Theorems~\ref{thm:main_theorem} and~\ref{thm:main_theorem_2} also have implications for the original Ulam-Kelly reconstruction conjecture. It was proved by Godsil and McKay~\cite{GodsilMcKay}, generalizing the earlier result by Tutte~\cite{tutte1979all}, that if $G$ is controllable, then it is efficiently reconstructible from its walk matrix $W^G$ and the pair of characteristic polynomials $(\phi^G, \phi^{\overline{G}})$. It is easy to show (see Section~\ref{sec:proof_main_theorem}) that $W^G$ can be efficiently computed from $(\phi^G, \phi^{\overline{G}})$ and the generalized polynomial deck of $G$.

By combining these observations with Theorem~\ref{thm:main_theorem_2} and the fact that almost all graphs are controllable, we obtain the following result.

\begin{corollary}\label{cor:main_corollary} Almost all graphs $G$ are efficiently reconstructible from the top $\lceil \frac{2n + 4}{3} \rceil$ coefficients of the pairs of polynomials $(\phi^{G \setminus i}, \phi^{\overline{G} \setminus i})$ for every $i$ in $[n]$. In particular, almost all graphs are efficiently reconstructible from the generalized polynomial deck.
\end{corollary}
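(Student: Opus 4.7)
The plan is to assemble the corollary from Theorem~\ref{thm:main_theorem_2} together with three established results, without introducing anything new.

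First, I would invoke the O'Rourke-Touri theorem~\cite{o2016conjecture}, which settles Godsil's conjecture that almost all graphs are controllable. For such $G$ the walk matrix $W^G$ has full rank $n$, and in particular rank at least $\lfloor (n-1)/3 \rfloor$, so the hypothesis of Theorem~\ref{thm:main_theorem_2} is automatically satisfied. Therefore, on a set of density one among graphs with $n$ vertices, the top $\lceil (2n+4)/3 \rceil$ coefficients of the pairs $(\phi^{G\setminus i}, \phi^{\overline G \setminus i})$ already suffice to reconstruct $(\phi^G, \phi^{\overline G})$ efficiently.

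Second, I would recover the walk matrix $W^G$. The paragraph preceding the corollary asserts that $W^G$ is efficiently computable from $(\phi^G, \phi^{\overline G})$ together with the deck data, with the details deferred to Section~\ref{sec:proof_main_theorem}. Here the content is an accounting check: the columns $A^k \mathbf{1}$ of $W^G$ for $0 \le k \le n-1$ can in fact be read off from the top $\lceil (2n+4)/3 \rceil$ coefficients already in hand. I would verify this by expanding the generating functions $\phi^{G\setminus i}(x)/\phi^G(x) = \sum_{k\ge 0}(A^k)_{ii}\, x^{-k-1}$ and the analogous series for $\overline G$, and then using the identity $A + \overline A = J - I$ to convert the closed-walk information at each vertex into the required row sums $(A^k \mathbf{1})_i$.

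Third, I would apply the Godsil-McKay theorem~\cite{GodsilMcKay}: for a controllable $G$, the triple $(W^G, \phi^G, \phi^{\overline G})$ determines $G$ by an efficient algorithm. Chaining this with the previous two steps yields the first assertion of the corollary, and the ``in particular'' clause follows at once, since the generalized polynomial deck trivially contains the top $\lceil (2n+4)/3 \rceil$ coefficients of each pair.

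The main obstacle is the book-keeping in the second step: making precise, for each $i$, exactly how many leading coefficients of $\phi^{G\setminus i}$ and $\phi^{\overline G \setminus i}$ must be consumed to recover the integer entries of $W^G$, and checking that $\lceil (2n+4)/3 \rceil$ is enough. Once this is in place the three black boxes compose into a polynomial-time algorithm that only manipulates rational power series of length $O(n)$.
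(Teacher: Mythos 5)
Your high-level skeleton (O'Rourke--Touri for controllability, Theorem~\ref{thm:main_theorem_2} to get $(\phi^G,\phi^{\overline G})$, recovery of $W^G$, then Godsil--McKay) is indeed the route the paper intends, and it cleanly yields the ``in particular'' clause: with the full generalized polynomial deck available, the pipeline of Section~\ref{sec:proof_main_theorem} recovers $w^{G\setminus i}$, $w^G_{i,i}$, and then $w^G_i$ in full, hence all $n$ columns of $W^G$, after which Godsil--McKay finishes.

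However, the ``accounting check'' you defer in step 2 is exactly where the argument breaks down for the quantitative first clause, and the claim you make there --- that the columns $A^k\mathbf{1}$ for $0\le k\le n-1$ can be read off from the top $m=\lceil(2n+4)/3\rceil$ coefficients --- is not correct as stated. Tracing the proof of Theorem~\ref{thm:main_general} (now with $\phi^G$ and $\phi^{\overline G}$ known in full), the top $m$ coefficients of $\phi^{G\setminus i}$ and $\phi^{\overline G\setminus i}$ give only the first $m$ coefficients of $w^G_{i,i}$ and the first $m-1$ coefficients of $w^{G\setminus i}$, hence via Theorem~\ref{thm:godsil_mckay} and Proposition~\ref{prop:gen_basics} only the first $m-1\approx 2n/3$ coefficients of $w^G_i$, i.e.\ roughly the first $2n/3$ columns of $W^G$. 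The order-$n$ recurrence supplied by $\phi^G$ (Proposition~\ref{prop:gen_basics_2}) expresses a coefficient in terms of the $n$ preceding ones, so it cannot fill in the missing $\approx n/3$ initial columns, and knowing the full Gram matrix $(W^G)^T W^G$ from $w^G$ still leaves continuous freedom on the orthogonal complement of the known columns. You therefore either need an additional argument that pins down the remaining columns of $W^G$ (or shows that Godsil--McKay can dispense with them), or you should explicitly restrict your proof to the ``in particular'' statement, where the full generalized polynomial deck is assumed. As written, step 2 does not close the gap it correctly identifies.
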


A result of Johnson and Newman~\cite{johnson1980note} (see also~\cite[p. 6]{godsil2012controllable}) states that two graphs $G$ and $H$ satisfy $(\phi^G, \phi^{\overline{G}}) = (\phi^H, \phi^{\overline{H}})$ if and only if there exists an orthogonal matrix $Q$ such that $Q^T A_G Q = A_H$ and $Q \mathbf{1} = \mathbf{1}$. It also follows from Theorem~\ref{thm:main_theorem} that the existence of this orthogonal matrix is efficiently decidable from the generalized polynomial decks of $G$ and $H$.

Using the same set of techniques as in the proofs of Theorems~\ref{thm:main_theorem} and~\ref{thm:main_theorem_2}, we also prove the following result, identifying a new infinite family of graphs (albeit a vanishingly small fraction of all graphs) for which Question~\ref{question:polynomial_reconstruction} has a positive solution.

\begin{theorem}\label{thm:main_theorem_3} From the polynomial deck, it is possible to determine whether $G$ has no cycles of length $4$, and if so, whether the rank of the walk matrix is at most $2$. If both conditions are satisfied, then $(\phi^G, \phi^{\overline{G}})$ can be efficiently reconstructed.
\end{theorem}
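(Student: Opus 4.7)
The plan is to extract, from the polynomial deck alone, the $4$-cycle count, all vertex degrees, and all ``neighbourhood degree sums'' $\sum_{u\sim v}\deg u$, and then to reconstruct $(\phi^G,\phi^{\overline G})$ by exploiting the divisibility of $\phi^G$ by the minimal polynomial of $A$ acting on $\mathbf{1}$.

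By Theorem~\ref{thm:charac_derivative}, summing the deck yields $\phi^G$ except for its constant term $c_n=(-1)^n\det(A)$; in particular the coefficient $c_4(\phi^G)$ is known. Sachs' theorem gives
\[
c_4(\phi^G) = \binom{|E(G)|}{2} - p_2(G) - 2\,q_4(G),
\]
where $p_2(G)$ and $q_4(G)$ count paths on three vertices and $4$-cycles, respectively. Reading the $c_2$ coefficient from $\phi^G$ and each $\phi^{G\setminus i}$ determines $|E(G)|$ and every $|E(G\setminus i)|$, hence every degree $\deg i = |E(G)| - |E(G\setminus i)|$, and then $p_2(G) = \sum_v\binom{\deg v}{2}$. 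Rearranging isolates $q_4(G)$, so the $C_4$-freeness of $G$ is detectable.

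Assume now $q_4(G)=0$. Since each $G\setminus i$ is then also $C_4$-free, the same Sachs identity applied to $\phi^{G\setminus i}$ gives $p_2(G\setminus i) = \binom{|E(G\setminus i)|}{2} - c_4(\phi^{G\setminus i})$, and the deletion identity
\[
p_2(G\setminus i) \;=\; p_2(G) - \binom{\deg i}{2} - \sum_{u\sim i}(\deg u - 1)
\]
then recovers $\sum_{u\sim i}\deg u$ for every $i$. The condition $\mathrm{rank}(W^G)\leq 2$ is equivalent to $A^2\mathbf{1}\in\mathrm{span}(\mathbf{1},A\mathbf{1})$, i.e.\ to the existence of scalars $a,b$ with $\sum_{u\sim v}\deg u = a\deg v + b$ for every $v$. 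This is tested by checking whether the points $(\deg v, \sum_{u\sim v}\deg u)$ all lie on a common line, and if so produces $(a,b)$.

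Suppose both conditions hold and let $m(x)$ be the minimal polynomial of $A$ acting on $\mathbf{1}$, namely $x-c$ if $G$ is $c$-regular and $x^2-ax-b$ otherwise. Because $A$ is real symmetric, $m$ has distinct real roots, each an eigenvalue of $A$, so $m\mid\phi^G$. Writing $\phi^G(x)=P(x)+c_n$ with $P$ already known from the deck, the remainder of $P$ modulo $m$ is forced to be a constant equal to $-c_n$; this recovers $c_n$ (and the vanishing of the linear part of the remainder serves as a consistency check). I expect this to be the main conceptual step of the proof: the $C_4$-free hypothesis is what makes the neighbourhood degree sums accessible from the polynomial deck, and once rank $W^G\leq 2$ is in hand, the divisibility $m\mid\phi^G$ converts the missing coefficient into the unique solution of a linear congruence. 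Finally, the walk counts $w_k=\mathbf{1}^T A^k\mathbf{1}$ satisfy the linear recurrence whose characteristic polynomial is $m$, with known initial data $w_0=n$ and $w_1=2|E(G)|$, so $W(y)=\sum_{k\geq 0}(-1)^k w_k y^{-k-1}$ is an explicit rational function; the matrix-determinant-lemma identity
\[
\phi^{\overline G}(x) \;=\; (-1)^n\,\phi^G(-x-1)\bigl(1-W(x+1)\bigr)
\]
then yields $\phi^{\overline G}$ as a polynomial, after the factor of $m(-x-1)$ in $\phi^G(-x-1)$ cancels the denominator of $W(x+1)$.
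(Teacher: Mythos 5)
Your proof is correct and follows the same skeleton as the paper: detect $C_4$-freeness, compute the first three columns of $W^G$, test the rank condition, and exploit the resulting short linear recurrence. There are two genuine variations worth noting. First, to obtain $e_i^T A^2\mathbf{1}=\sum_{u\sim i}\deg u$, the paper uses a relation between length-$2$ walks from $i$ and closed length-$4$ walks from $i$ (available precisely because $G$ is $C_4$-free), while you instead compute $p_2(G\setminus i)$ from the Sachs coefficient $c_4(\phi^{G\setminus i})$ and invert the combinatorial deletion identity for $p_2$; both routes recover the same vector. Second, to determine the missing constant term of $\phi^G$, the paper applies the Hankel relation of Proposition~\ref{prop:gen_basics_2} to the now fully-known sequence $(w_k)$, extracting $b_n$ from $w_n+b_1w_{n-1}+\cdots+b_n w_0=0$ with $w_0=n\neq 0$, whereas you phrase the same constraint as the divisibility $m\mid\phi^G$ by the degree-at-most-$2$ minimal polynomial of $A$ on $\mathbf{1}$ and read off $c_n$ as minus the remainder of $P\bmod m$. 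These are equivalent --- the Hankel relation for $(w_k)$ is exactly what encodes $m\mid\phi^G$ once the recurrence is known --- but your formulation makes the mechanism transparent and yields a free consistency check from the vanishing linear part of the remainder. Your closing identity $\phi^{\overline G}(x)=(-1)^n\phi^G(-x-1)\bigl(1-W(x+1)\bigr)$ is a rearrangement of Equation~\eqref{eq:all_walks} and matches the paper's final step.
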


Note that regular graphs are precisely those for which the walk matrix has rank $1$, and it is straightforward to show that Question~\ref{question:polynomial_reconstruction} has a positive solution in this case. Graphs whose walk matrix has rank $2$ are referred to in the literature as $2$-main graphs (see~\cite{hayat2016note}).

Another approach to Question~\ref{question:polynomial_reconstruction} focuses on the properties of a graph that can be deduced from its polynomial deck. It is known that the degree sequence, the length of the shortest odd cycle, and the number of triangles, quadrangles, and pentagons can be determined from the polynomial deck. A summary of the known results in this direction is provided in Lemmas 4.2 and 4.3 of~\cite{sciriha2023polynomial}. 

Our next results provide additional information that can be inferred from the polynomial deck. 

\begin{theorem}\label{thm:main_theorem_4} Let $G$ be a graph with at least $3$ vertices. Then, the constant coefficients of $\phi^G\pmod{2}$ and $\phi^{\overline{G}}\pmod{2}$, as well as $W^G\pmod{2}$ and the top $n$ coefficients of $\phi^{\overline{G}}\pmod{4}$, can be reconstructed from the polynomial deck of $G$.
\end{theorem}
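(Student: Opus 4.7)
My plan relies on three polynomial identities combined with careful mod-$2$ and mod-$4$ extraction. Let $P_G(x):=\mathbf{1}^{\mathsf T}\operatorname{adj}(xI-A)\mathbf{1}$ and $\theta_{ij}(x):=(\operatorname{adj}(xI-A))_{ij}$, so that $\theta_{ii}=\phi^{G\setminus i}$. First, the matrix determinant lemma applied to $\overline A = J - I - A$ produces
\[
 \phi^{\overline G}(y) = (-1)^n\bigl[\phi^G(-y-1) + P_G(-y-1)\bigr].
\]
Second, the symmetric decomposition of the adjugate gives
\[
 P_G(x) = \partial_x\phi^G(x) + 2\sum_{i<j}\theta_{ij}(x).
\]
Third, Jacobi's identity for the symmetric matrix $xI-A$ gives
\[
 \theta_{ij}(x)^2 = \phi^{G\setminus i}(x)\phi^{G\setminus j}(x) - \phi^G(x)\phi^{G\setminus\{i,j\}}(x).
\]
By Theorem~\ref{thm:charac_derivative}, the deck already reconstructs every non-constant coefficient of $\phi^G$, so the constant $c_0(\phi^G)$ is the only integer unknown.

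Fix a vertex $i$ and sum Jacobi's identity over $j\neq i$. Applying Theorem~\ref{thm:charac_derivative} to $G\setminus i$ replaces $\sum_{j\neq i}\phi^{G\setminus\{i,j\}}$ by $\partial_x\phi^{G\setminus i}$, yielding modulo $2$
\[
 \Bigl(\sum_{j\neq i}\theta_{ij}\Bigr)^{\!2} \equiv \partial_x\bigl(\phi^G\,\phi^{G\setminus i}\bigr) + \bigl(\phi^{G\setminus i}\bigr)^{2} \pmod{2}.
\]
Since in characteristic $2$ the derivative of any polynomial lies in $\mathbb{F}_2[x^2]$, the right-hand side lies in $\mathbb{F}_2[x^2]$, and applying the inverse Frobenius determines $S_i := \sum_{j\neq i}\theta_{ij}\pmod{2}$ as an $\mathbb{F}_2$-affine function of the single bit $c_0(\phi^G)\pmod{2}$. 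Setting $r_i(x) := \phi^{G\setminus i}(x)+S_i(x)$, the identity $r_i(x)=\phi^G(x)\cdot\bigl((xI-A)^{-1}\mathbf{1}\bigr)_i$ implies that the power series expansion $r_i/\phi^G = \sum_k (A^k\mathbf{1})_i\,x^{-k-1}$ reconstructs the $i$-th row of $W^G$ modulo $2$ once $c_0(\phi^G)\pmod{2}$ is known.

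Pinning down $c_0(\phi^G)\pmod{2}$ is the main obstacle. My approach is to apply Cayley-Hamilton to $\mathbf{1}$: the relation $\phi^G(A)\mathbf{1} \equiv 0 \pmod{2}$ is a vector equation in $\mathbb{F}_2^n$ linking the reconstructed columns $(A^k\mathbf{1})\pmod{2}$ of $W^G$ to the coefficients of $\phi^G$, and since both sides depend $\mathbb{F}_2$-affinely on $c_0\pmod{2}$, generic configurations pin down its parity uniquely. In rank-deficient cases over $\mathbb{F}_2$, the $\mathbb{F}_2$-minimal polynomial of $\mathbf{1}$ under $A$—read off from the column dependencies of the tentative $W^G$—must divide $\phi^G\pmod{2}$, providing the needed additional constraint. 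Verifying that this uniqueness argument succeeds across all degeneracies of the walk matrix is where I expect the main technical difficulty to lie.

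Once $c_0(\phi^G)\pmod{2}$ is known, $\phi^G\pmod{2}$ and the full $W^G\pmod{2}$ are determined. An entirely analogous Frobenius extraction, now summing Jacobi's identity over all $i<j$ and using $\sum_{i<j}\phi^{G\setminus\{i,j\}}=\partial_x^2\phi^G/2$ (from differentiating Theorem~\ref{thm:charac_derivative} twice), recovers $R := \sum_{i<j}\theta_{ij}\pmod{2}$. The mod-$4$ reduction of the second identity then yields $P_G\pmod{4}$, and substituting into the first identity recovers the top $n$ coefficients of $\phi^{\overline G}\pmod{4}$. Finally, reducing the first identity modulo $2$ and extracting the constant term in $y$ recovers $c_0(\phi^{\overline G})\pmod{2}$ from the now-known $\phi^G\pmod{2}$ and $P_G\pmod{2}=\partial_x\phi^G\pmod{2}$.
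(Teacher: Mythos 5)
Your algebraic framework is a genuinely different and attractive route from the paper's. Where the paper converts the polynomial deck into a mod-$4$ generalized polynomial deck via Ji--Tang--Wang--Zhang's Theorem~\ref{thm:wang_series} and then works with walk generating functions and the Godsil--McKay identity, you instead work directly with the adjugate: the matrix determinant lemma, the decomposition $P_G=\partial_x\phi^G+2\sum_{i<j}\theta_{ij}$, and Jacobi's determinantal identity. Your mod-$2$ Frobenius extraction of $\sum_{j\neq i}\theta_{ij}$ and $\sum_{i<j}\theta_{ij}$ plays the role of the paper's Lemma~\ref{lem:main_mechanism_mod4}, and the passage from $\sum_{i<j}\theta_{ij}\pmod 2$ to $P_G\pmod 4$ and then to the top $n$ coefficients of $\phi^{\overline G}\pmod 4$ is correct. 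This has the appeal of avoiding Theorem~\ref{thm:wang_series} entirely.

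However, the step you flag as ``the main obstacle'' --- pinning down $c_0(\phi^G)\pmod 2$ --- is a genuine gap, and the mechanism you propose cannot work as stated. The constraint $\phi^G(A)\mathbf{1}\equiv 0\pmod 2$ is vacuous here: for \emph{either} value of $b\in\{0,1\}$ the candidate sequence obtained by expanding $r_{i,b}/\phi_b$ automatically satisfies the linear recurrence with characteristic polynomial $\phi_b$ (this is precisely the content of the second identity in Proposition~\ref{prop:gen_basics_2}, valid for any $p/q$ with $\deg p<\deg q$). Likewise, the minimal polynomial of the candidate sequence is, by construction, the denominator of $r_{i,b}/\phi_b$ in lowest terms and therefore always divides $\phi_b$; ``the minimal polynomial must divide the characteristic polynomial'' holds tautologically for both candidates and distinguishes nothing. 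So Cayley--Hamilton, applied to a walk sequence that was itself reconstructed from the unknown bit, yields no information.

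What breaks this circularity in the paper is twofold. First, Proposition~\ref{prop:first_coeficients_mod_4} shows that the first $\lceil n/2\rceil$ coefficients of each $w^G_i(x)\pmod 2$ are determined by the deck \emph{independently} of $c_0$, and a bootstrap (Lemma~\ref{lem:main_mechanism_mod4}) pushes this to $\lceil n/2\rceil+1$. Second, instead of Cayley--Hamilton the paper invokes Wang's Lemma~\ref{lem:wang_walk_matrix2}, a much sharper mod-$2$ relation involving only the columns $A^k\mathbf{1}$ with $k\le\lceil n/2\rceil$ and only the \emph{even}-indexed coefficients $b_2,b_4,\dots,b_n$; since those columns are already known from the deck alone, this relation is a genuine constraint on $b_n\pmod 2$. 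Even so, this suffices only for $n\equiv 0\pmod 4$; the case $n\equiv 2\pmod 4$ (Propositions~\ref{prop:2mod4_coefficients} and~\ref{prop:obtain_cte_mod2_2mod4}) additionally requires the $2$-adic trace/coefficient machinery of Anni--Ghitza--Medvedovsky (Theorem~\ref{thm:deep_power_sum_identity}), a substantial piece of input with no counterpart in your outline. Until you can produce an analogue of these steps, the reconstruction of $c_0(\phi^G)\pmod 2$ --- on which everything else in your last paragraph depends --- remains unestablished.
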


We remark that if the number of vertices $n$ is odd, the constant coefficient of $\phi^G \pmod{2}$ is always even (Lemma~\ref{lem:cte_mod2_even}). Therefore, in the previous result, the ability to determine the constant coefficient of $\phi^G \pmod{2}$ is non-trivial only when $n$ is even. However, the information we obtain about $W^G \pmod{2}$ and $\phi^{\overline{G}} \pmod{4}$ in the previous result is non-trivial and was previously unknown for all values of $n$.

If the number of vertices $n$ is even, or if an additional condition on the rank of $W^G \pmod{2}$ is satisfied, we can, in fact, obtain a stronger result.

\begin{theorem}\label{thm:main_theorem_5} Let $G$ be a graph with $n\geq 3$ vertices. If $n$ is even or if the rank of $W^G$ over $\Fds_2$ is less than $\lceil \frac{n}{2}\rceil$, then, the constant coefficient of $\phi^G\pmod{4}$, as well as $\phi^{\overline{G}}\pmod{4}$, can be  reconstructed from the polynomial deck of $G$.
\end{theorem}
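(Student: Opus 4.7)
The plan is to reduce to reconstructing the constant coefficient $c_0$ of $\phi^G\pmod 4$, since Theorem~\ref{thm:charac_derivative} recovers all higher coefficients of $\phi^G$ exactly from the polynomial deck, and the paper's mod-$4$ complement formula (strengthening Ji--Tang--Wang--Zhang) then propagates $\phi^G\pmod 4$ to $\phi^{\overline G}\pmod 4$. The main tool is the polynomial identity
\[
\phi^G(y)+\psi^G(y)=(-1)^n\,\phi^{\overline{G}}(-y-1),
\]
where $\psi^G(y)\defeq \mathbf{1}^\top\operatorname{adj}(yI-A)\mathbf{1}$, coming from applying the matrix determinant lemma to $yI-A+J=(y+1)I+\overline{A}$. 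Writing $\psi^G(y)=\sum_j d_j y^j$ with $d_j=\sum_{i=j+1}^n c_i W_{i-j-1}$ and $W_k=\mathbf{1}^\top A^k\mathbf{1}$, Theorem~\ref{thm:main_theorem_4} supplies the top $n$ coefficients of $\phi^{\overline{G}}\pmod 4$, so the $y^j$-coefficient of the right-hand side is known modulo $4$ for every $j\geq 1$. Descending from $y^{n-1}$, each such equation introduces only one new walk count $W_{n-1-j}$ with leading coefficient $c_n=1$, so $W_0=n, W_1,\ldots,W_{n-2}\pmod 4$ are recovered iteratively, and the remaining coefficient of $y^0$ yields a single equation
\[
c_0+W_{n-1}-(-1)^n a_0\equiv K\pmod 4
\]
for a known $K$, with three unknowns $c_0, W_{n-1}, a_0$ each known only modulo $2$.

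The hypothesis provides the extra information needed to close the system. When $n$ is even, Sachs' spanning-subgraph expansion gives
\[
c_0\equiv(-1)^{n/2}m(G)-2\sum_{C}(-1)^{(n-|C|)/2}m(G\setminus V(C))\pmod 4,
\]
summed over cycles $C\subset G$ of even length at least $4$, so it suffices to recover the bit above $m(G)\pmod 2$ together with the mod-$2$ cycle-matching sum from the multiset $\{\phi^{G\setminus i}\}$; the plan is to extract these by combining $\phi^{G\setminus i}(0)$-evaluations with the mod-$2$ walk data of Theorem~\ref{thm:main_theorem_4}. When $\operatorname{rank}_{\Fds_2}W^G<\lceil n/2\rceil$, the minimal polynomial $q(x)\in\Fds_2[x]$ of $A$ restricted to the walk span has degree $r<\lceil n/2\rceil$, and any integer lift yields $q(A)\mathbf{1}=2u$ with $u\in\mathbb{Z}^n$. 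The relation $\sum_{j=0}^r q_j W_{n-1-r+j}=2\,\mathbf{1}^\top A^{n-1-r}u$ then determines $W_{n-1}\pmod 4$ once the parity of $\mathbf{1}^\top A^{n-1-r}u$ is extracted from $W^G\pmod 2$. Either way the $y^0$-equation pins down $c_0\pmod 4$.

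The main obstacle will be the concrete deck-level extraction inside each case: showing deck-computability of the bit above $m(G)\pmod 2$ together with the cycle-matching parity for the $n$ even case, and of the appropriate parity of $\mathbf{1}^\top A^{n-1-r}u$ for the rank case. Both push beyond what is needed for Theorem~\ref{thm:main_theorem_4} and are where the bulk of the technical work must lie.
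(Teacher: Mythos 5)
Your reduction to finding the constant coefficient $c_0$ of $\phi^G\pmod 4$ is correct, but both of the two ways you propose to close the system have a genuine missing idea, and the paper resolves them with a much simpler device that you do not use.

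First, a bookkeeping point: by the argument in the proof of Theorem~\ref{thm:main_theorem_4}, the first $n$ coefficients of $w^G(x)\pmod 4$, i.e.\ $\mathbf{1}^TA^k\mathbf{1}\pmod 4$ for $k\in\{0,\dots,n-1\}$, are already deck-computable; so the walk count you label $W_{n-1}$ is available modulo $4$, not just modulo $2$. The one additional quantity that is genuinely missing is $\mathbf{1}^TA^{n}\mathbf{1}\pmod 4$, and this is exactly what the paper targets. Once this is in hand, the paper does not use the $y^0$-coefficient equation at all: it feeds the first $n+1$ coefficients of $w^G\pmod 4$ together with $w^G_i\pmod 2$ into the Godsil--McKay identity to recover the first $n+1$ coefficients of $w^G_{i,i}\pmod 4$, and then the leading-coefficient-$1$ linear relation from Proposition~\ref{prop:gen_basics_2} produces $b_n\pmod 4$ directly. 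Your closing step, even granting $W_{n-1}$ mod $4$, leaves the $y^0$-equation with $c_0$ and $a_0$ as interrelated unknowns; invoking Theorem~\ref{thm:main_theorem_6} to express $a_0$ in terms of $c_0$ does not obviously give a uniquely solvable equation, because the $y^0$-equation and the mod-$4$ complement relation are essentially the same identity restated.

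Second, your two proposed ``extra bit'' extractions both have gaps. For $n$ even, you try to read the Sachs/matching contribution to $c_0\pmod 4$ off the deck, which you admit you cannot currently do; the paper instead observes that for $n$ even one already knows $e_i^TA^{n/2}\mathbf{1}\pmod 2$ for every $i$, and since $z^2\pmod 4$ depends only on $z\pmod 2$, the identity $\mathbf{1}^TA^n\mathbf{1}=\sum_i\bigl(e_i^TA^{n/2}\mathbf{1}\bigr)^2$ gives $\mathbf{1}^TA^n\mathbf{1}\pmod 4$ for free (this is Lemma~\ref{lem:main_mechanism_mod4}). For the low-rank case, your relation $q(A)\mathbf{1}=2u$ requires the parity of $\mathbf{1}^TA^{n-1-r}u$, but $u$ is the ``next binary digit'' of $q(A)\mathbf{1}$ and is not determined by $W^G\pmod 2$: different integer lifts of $q$ yield different $u$ with different parities. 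The paper sidesteps this entirely by taking the integer vector $v=A^{(n-1)/2}\mathbf{1}+d_1A^{(n-3)/2}\mathbf{1}+\cdots+d_{(n-1)/2}\mathbf{1}$ (with $d_j\in\{0,1\}$ lifting the $\Fds_2$-relation), noting $v\equiv 0\pmod 2$ forces $v^TAv\equiv 0\pmod 4$, and expanding: the off-diagonal cross terms $2d_ld_{l'}\mathbf{1}^TA^{n-l-l'}\mathbf{1}$ vanish mod $4$ because $\mathbf{1}^TA^k\mathbf{1}$ is even for $k\geq 1$ (Lemma~\ref{lem:walks_even}), leaving $\mathbf{1}^TA^n\mathbf{1}+\sum_{l\geq 1}d_l^2\mathbf{1}^TA^{n-2l}\mathbf{1}\equiv 0\pmod 4$ with all but the first term already known mod $4$. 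That squaring-kills-the-unknown-lift mechanism is the key idea your proposal is missing in both cases.
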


Wang~\cite{wang2017simple} (see Lemma~\ref{lem:wang_walk_matrix2}) proved that the rank of $W^G$ over $\mathbb{F}_2$ is at most $\lceil \frac{n}{2} \rceil$. Therefore, the conclusion of Theorem~\ref{thm:main_theorem_5} remains unknown only in the case where $n$ is odd and the rank of $W^G$ over $\mathbb{F}_2$ is exactly $\lceil \frac{n}{2} \rceil$. Determining whether Theorem~\ref{thm:main_theorem_5} holds in this special case remains an interesting open problem.

As a consequence of Theorems~\ref{thm:main_theorem_4} and~\ref{thm:main_theorem_5}, we obtain the following result.

\begin{corollary}\label{cor:main_corollary2} If $G$ and $H$ are graphs on $n$ vertices that form a counterexample pair to the polynomial reconstruction problem, then $\phi^G = \phi^H + 2k$ for some $k$ in $\Zds\setminus \{0\}$. Moreover, if $n$ is even, then $k$ is also even.
\end{corollary}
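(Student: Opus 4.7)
The plan is to reduce the statement to an analysis of the constant coefficients of $\phi^G$ and $\phi^H$, and then invoke Theorems~\ref{thm:main_theorem_4} and~\ref{thm:main_theorem_5}. First, I would observe that by Theorem~\ref{thm:charac_derivative}, $\partial_x \phi^G = \sum_{i \in [n]} \phi^{G \setminus i}$, and analogously for $H$. Since $G$ and $H$ share the same polynomial deck, these two derivatives agree, so $\phi^G - \phi^H$ is a constant polynomial. Moreover, both $\phi^G$ and $\phi^H$ are monic of degree $n$, so this constant equals the difference of their constant coefficients; denote it by $c$. Because $(G, H)$ is a counterexample pair, $\phi^G \neq \phi^H$, forcing $c \neq 0$. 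It then remains to analyze the divisibility of $c$ by $2$, and when $n$ is even also by $4$.

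Next, I would apply Theorem~\ref{thm:main_theorem_4}: since the constant coefficient of $\phi^G \pmod 2$ is reconstructible from the polynomial deck, it must coincide with that of $\phi^H \pmod 2$. Therefore $c \equiv 0 \pmod 2$, so one can write $c = 2k$ with $k \in \Zds \setminus \{0\}$, which gives the first assertion. If in addition $n$ is even, Theorem~\ref{thm:main_theorem_5} asserts that the constant coefficient of $\phi^G \pmod 4$ is reconstructible from the polynomial deck, so in this case $c \equiv 0 \pmod 4$, which is equivalent to $k$ being even.

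Since the corollary is a formal consequence of the two theorems already proved, there is no substantive obstacle to address here; all the real difficulty has been absorbed into the proofs of Theorems~\ref{thm:main_theorem_4} and~\ref{thm:main_theorem_5}, and the present argument is purely bookkeeping once those are in hand.
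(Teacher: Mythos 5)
Your argument is correct and is exactly the reduction the paper has in mind; the corollary is stated in the paper as an immediate consequence of Theorems~\ref{thm:main_theorem_4} and~\ref{thm:main_theorem_5} with no written proof, and your bookkeeping (derivative identity forces $\phi^G - \phi^H$ to be a nonzero constant, then the reconstructibility of the constant term mod $2$, and mod $4$ when $n$ is even, forces that constant to be $\equiv 0$ modulo $2$ and $4$ respectively) is precisely the intended deduction.
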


To the best of the author's knowledge, Corollary~\ref{cor:main_corollary2} is the only result that provides non-trivial information about the constant coefficients of $\phi^G$ and $\phi^H$ for any potential counterexample pair $(G, H)$ to the polynomial reconstruction problem (Question~\ref{question:polynomial_reconstruction}).

Although our proofs of Theorems~\ref{thm:main_theorem_4} and~\ref{thm:main_theorem_5} do not address signed graphs (though we believe the results still apply), signed cycles (balanced and unbalanced) share the same polynomial deck but have characteristic polynomials that differ by a multiple of four (see~\cite[Section 12.4]{sciriha2023polynomial}). As a consequence, at least for signed graphs, there exist counterexamples to the analogue of Question~\ref{question:polynomial_reconstruction} that satisfy the conclusion of Corollary~\ref{cor:main_corollary2}.

A key component of our approach to Theorem~\ref{thm:main_theorem_4} is the following recent result by Ji, Thang, Wang, and Zhang~\cite{ji2024new}, obtained in their study of graphs determined by the generalized spectrum.

\begin{theorem}[Theorem 1.6 in~\cite{ji2024new}]\label{thm:wang_et_al} Let $G$ and $H$ be graphs with $\phi^G = \phi^H$. Then, $\phi^{\overline{G}} \equiv \phi^{\overline{H}}\pmod{4}$.
\end{theorem}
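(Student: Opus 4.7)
The plan is to reduce $\phi^{\overline G} \equiv \phi^{\overline H} \pmod 4$ to a congruence on the walk-generating polynomial $\psi^G(y) := \mathbf{1}^T \operatorname{adj}(yI - A_G)\mathbf{1}$, and then to recognize that congruence as one determined by the spectrum.

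Starting from the identity $xI - \overline{A} = (x+1)I + A - \mathbf{1}\mathbf{1}^T$, the matrix determinant lemma, combined with $\det((x+1)I + A) = (-1)^n \phi^G(-x-1)$ and $\operatorname{adj}((x+1)I + A) = (-1)^{n-1}\operatorname{adj}((-x-1)I - A)$, gives
\[
\phi^{\overline{G}}(x) = (-1)^n\bigl[\phi^G(-x-1) + \psi^G(-x-1)\bigr].
\]
Since $\phi^G = \phi^H$ forces $\phi^G(-x-1) = \phi^H(-x-1)$, the statement reduces to $\psi^G \equiv \psi^H \pmod 4$. Splitting the adjugate into diagonal and off-diagonal parts, using $\sum_i (\operatorname{adj}(yI - A_G))_{ii} = \partial_y \phi^G(y)$ from Theorem~\ref{thm:charac_derivative} together with the fact that the adjugate of a symmetric matrix is symmetric, yields
\[
\psi^G(y) = \partial_y\phi^G(y) + 2\rho^G(y), \qquad \rho^G(y) := \sum_{i < j}(\operatorname{adj}(yI - A_G))_{ij},
\]
so the task becomes showing $\rho^G \equiv \rho^H \pmod 2$.

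To control $\rho^G$, I would expand $\operatorname{adj}(yI - A) = \phi^G(y)(yI - A)^{-1}$ as a formal power series in $1/y$ to obtain
\[
2\rho^G(y) = \phi^G(y)\sum_{k \ge 0}\frac{m_k - p_k}{y^{k+1}},
\]
where $m_k := \mathbf{1}^T A_G^k \mathbf{1}$ is the total number of $k$-walks in $G$ and $p_k := \operatorname{tr}(A_G^k)$. Because $\rho^G$ has degree at most $n-1$, only $m_0,\dots,m_{n-1}$ enter its coefficients. Symmetry of $A^k$ makes each $m_k - p_k$ even, so $\rho^G$ has integer coefficients that are $\mathbb{Z}$-linear combinations (using coefficients of $\phi^G$) of the integers $(m_k - p_k)/2$. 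Since each $p_k$ is already a spectral invariant, $\rho^G \equiv \rho^H \pmod 2$ reduces to the finite family of congruences
\[
m_k^G \equiv m_k^H \pmod 4, \qquad 0 \le k \le n-1,
\]
with the cases $k = 0, 1$ immediate from $m_0 = n$ and $m_1 = 2|E|$.

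The main obstacle is establishing these congruences for $2 \le k \le n-1$. The case $k = 2$ is instructive and doable: enumerating closed $4$-walks gives $\operatorname{tr}(A^4) = 8\,c_4(G) + 4\,P_2(G) + 2|E|$, which combined with $m_2 = 2|E| + 2P_2$ yields $m_2 \equiv |E| + \tfrac{1}{2}\operatorname{tr}(A^4) \pmod 4$, manifestly a spectral invariant. The crux of the proof is producing analogous mod-$4$ identities for each $m_k$, expressing $m_k$ in terms of power sums $\operatorname{tr}(A^j)$ and coefficients of $\phi^G$ with all non-spectral contributions (counts of short cycles, specific walk patterns) appearing as multiples of $4$. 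Constructing such identities uniformly in $k$, either by a systematic classification of walk structures or by an inductive argument that absorbs the new ambiguity at each step, is the technical heart of the argument.
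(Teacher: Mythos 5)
Your reduction is sound and correctly isolates the crux. The identity $\phi^{\overline{G}}(x) = (-1)^n\bigl[\phi^G(-x-1) + \psi^G(-x-1)\bigr]$ from the matrix determinant lemma, the adjugate split $\psi^G = \partial_y\phi^G + 2\rho^G$ using symmetry of the adjugate, the generating-function reformulation $2\rho^G(y) = \phi^G(y)\sum_{k\geq 0}(m_k - p_k)/y^{k+1}$ (with only $m_0,\dots,m_{n-1}$ entering the polynomial coefficients), and your explicit $k = 2$ verification are all correct. But the argument halts precisely where the theorem begins. Declaring that ``producing analogous mod-$4$ identities for each $m_k$ \dots\ is the technical heart of the argument'' names the difficulty without resolving it: you offer no mechanism --- inductive, generating-function, or walk-combinatorial --- by which the non-spectral contributions to $m_k = \mathbf{1}^T A^k \mathbf{1}$, which become increasingly intricate cycle and walk patterns as $k$ grows, can be shown to always appear as multiples of $4$.

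The missing lemma is exactly Proposition 3.4 of the cited reference, reproduced in this paper as Theorem~\ref{thm:wang_series}: for $m = 2^t(2s+1)$ with $t,s \geq 0$,
\[
\mathbf{1}^T A^{m}\mathbf{1} \;\equiv\; \frac{\operatorname{tr}(A^{2m})}{2^{t+1}} + \sum_{l=0}^{t+1} \frac{\operatorname{tr}(A^{2^l(2s+1)})}{2^l} \pmod{4}.
\]
This is precisely the explicit spectral formula for $m_k \pmod{4}$, uniform in $k$, that you identify as the crux, and its proof (a careful walk-enumeration argument, related in Section~\ref{sec:charac_Gc} to power-sum versus elementary-symmetric identities modulo prime powers) is the genuine content of the theorem. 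Your route --- pass through the walk generating function of Equation~\eqref{eq:all_walks} and reduce $\phi^{\overline{G}}\pmod{4}$ to $\mathbf{1}^T A^k \mathbf{1}\pmod{4}$ --- is the same as the one the paper sketches when explaining how Theorem~\ref{thm:wang_series} implies Theorem~\ref{thm:wang_et_al}; the difference is that the paper has the identity and you do not, so the proposal is an accurate reduction, not a proof.
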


If we are given the polynomial deck $\{\phi^{G\setminus i}\}_{i \in [n]}$, Theorem~\ref{thm:wang_et_al} allows us to immediately obtain $\{(\phi^{G\setminus i}, \phi^{\overline{G}\setminus i} \pmod{4})\}_{i \in [n]}$, which is a version of the generalized polynomial deck modulo $4$. This suggests that the techniques used in the proof of Theorem~\ref{thm:main_theorem} can also be directly applied to Theorems~\ref{thm:main_theorem_4} and~\ref{thm:main_theorem_5}. Unfortunately, this is true only to a certain extent, as several significant technical issues arise from the use of arithmetic modulo $4$.

In our investigations leading to Theorems~\ref{thm:main_theorem_4} and~\ref{thm:main_theorem_5}, we also found that Theorem~\ref{thm:wang_et_al} can be strengthened to the following result.

\begin{theorem}\label{thm:main_theorem_6} For any graph $G$, given $\phi^G\pmod{4}$ we can compute $\phi^{\overline{G}}\pmod{4}$.
\end{theorem}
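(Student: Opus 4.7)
The plan is the following. Applying the matrix-determinant lemma to the rank-one perturbation $J = \mathbf{1}\mathbf{1}^T$ in $\overline{A} = J - I - A$ gives
\[
\phi^{\overline{G}}(x) = \det\bigl((x+1)I + A - \mathbf{1}\mathbf{1}^T\bigr) = (-1)^n \phi^G(-x-1) - Q(x),
\]
where $Q(x) := \mathbf{1}^T \operatorname{adj}((x+1)I + A) \mathbf{1}$. Since $(-1)^n \phi^G(-x-1) \pmod{4}$ depends only on $\phi^G \pmod{4}$, the theorem reduces to showing the same for $Q(x) \pmod{4}$.

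First I would split off the diagonal contribution. Since $\operatorname{adj}((x+1)I+A)_{ii} = (-1)^{n-1} \phi^{G\setminus i}(-x-1)$, Theorem~\ref{thm:charac_derivative} yields
\[
\operatorname{tr}(\operatorname{adj}((x+1)I + A)) = (-1)^{n-1} (\phi^G)'(-x-1),
\]
which is a polynomial in $\phi^G$. Using the symmetry of the adjugate of a symmetric matrix,
\[
Q(x) \equiv (-1)^{n-1}(\phi^G)'(-x-1) + 2\,S(x) \pmod{4}, \qquad S(x) := \sum_{i<j} \operatorname{adj}((x+1)I + A)_{ij},
\]
so it remains to show that $S(x) \pmod{2}$ is determined by $\phi^G \pmod{4}$.

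To handle $S(x) \pmod{2}$, I would expand $\operatorname{adj}((x+1)I + A)$ using the Faddeev-LeVerrier algorithm to write $Q(x) = \sum_{l=0}^{n-1} W_l\, R_l(x)$, where $W_l = \mathbf{1}^T A^l \mathbf{1}$ is the number of walks of length $l$ in $G$ and each $R_l(x) \in \mathbb{Z}[x]$ depends only on the coefficients of $\phi^G$. Each $R_l \pmod{4}$ is then determined by $\phi^G \pmod{4}$, but the individual walk numbers $W_l$ are not. The bridge is provided by three ingredients: (i) a palindrome-reversal argument on walks of length $k$, showing $W_k \equiv 0 \pmod{2}$ for $k \geq 1$ and $W_{2m} \equiv W_m \pmod{2}$; (ii) Harary-Sachs-type identities relating combinations of $W_l \pmod{4}$ to low-index coefficients of $\phi^G$ (for instance, $W_1 = -2 a_{n-2}$ and $W_2 - 2 W_1$ is governed by $a_{n-4}$ via the count of $2$-matchings); and (iii) Theorem~\ref{thm:wang_et_al} as a cospectral guide, which already forces $\sum_l (W_l^G - W_l^H) R_l(x) \equiv 0 \pmod{4}$ whenever $\phi^G = \phi^H$, and whose rigidifying identities can be strengthened from equality of $\phi^G$ to congruence $\pmod{4}$. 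The main obstacle is this final bookkeeping: one must verify that every ambiguity in $W_l$ beyond what is pinned down by $\phi^G \pmod{4}$ is absorbed into $R_l(x) \pmod{4}$, so that the combination $\sum_l W_l R_l(x) \pmod{4}$ is well defined as a function of $\phi^G \pmod{4}$.
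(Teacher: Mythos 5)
Your setup is correct and is, in substance, the same reduction the paper makes: the matrix--determinant identity $\phi^{\overline{G}}(x) = (-1)^n\phi^G(-x-1) - \mathbf{1}^T\operatorname{adj}((x+1)I+A)\mathbf{1}$ is just a reshuffling of Equation~\eqref{eq:all_walks}, and splitting off the diagonal $\operatorname{tr}(\operatorname{adj})=(-1)^{n-1}(\phi^G)'(-x-1)$ correctly isolates the problem to showing that the walk numbers $W_l=\mathbf{1}^TA^l\mathbf{1}$, reduced modulo $4$, are determined by $\phi^G\pmod 4$. The difficulty is that you do not actually prove this reduction; you name it as "the main obstacle" and gesture at three ingredients, none of which closes it.

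Concretely, ingredient (i) (all $W_k$ are even for $k\ge1$, and $W_{2m}\equiv W_m\pmod 2$) is true but far too weak: it pins down $W_l$ only modulo $2$. Ingredient (ii) gives a couple of isolated identities ($W_1=-2a_{n-2}$, etc.), which do not extend to all $l$. Ingredient (iii) is where the argument becomes circular: Theorem~\ref{thm:wang_et_al} assumes $\phi^G=\phi^H$ \emph{exactly}, and the strengthening you request --- "from equality of $\phi^G$ to congruence $\pmod 4$" --- is literally Theorem~\ref{thm:main_theorem_6}, the statement you are trying to prove. What you are missing is the content of Theorem~\ref{thm:wang_series} (Proposition 3.4 of Ji--Tang--Wang--Zhang), which expresses $\mathbf{1}^TA^m\mathbf{1}\pmod 4$ as an explicit combination of $\operatorname{tr}(A^{2^l(2s+1)})$ with $2$-power denominators, together with Theorem~\ref{thm:deep_power_sum} (Anni--Ghitza--Medvedovsky), which shows that $\operatorname{tr}(A^m)\pmod{2^{v_2(m)+2}}$ --- note the extra $2$-adic precision beyond what $\operatorname{tr}(A^m)\pmod 4$ would give --- is itself a function of $\phi^G\pmod 4$. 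The second of these is nontrivial and relies on an analysis of the Newton-to-elementary transition using $p$-adic valuations of multinomial coefficients (Kummer's theorem). Without both pieces, the "final bookkeeping" you defer is in fact the entire proof.
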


Computational experiments suggest that, in general, $\phi^{\overline{G}}\pmod{p^k}$ is not determined by $\phi^G\pmod{p^k}$ or $\phi^G$ for any choice of prime power $p^k$ other than $2$ or $2^2$.

As explained in Section~\ref{sec:charac_Gc}, Theorem~\ref{thm:main_theorem_6} follows directly from Proposition 3.4 in~\cite{ji2024new} and a recent result by Anni, Ghitza, and Medvedovsky~\cite{anni2024elementary}. In Section~\ref{sec:charac_Gc}, we also present a more concise proof of this recent result of the last article in our specific context.

In contrast to the other results we have presented, although our proofs of Theorems~\ref{thm:main_theorem_4},~\ref{thm:main_theorem_5} and~\ref{thm:main_theorem_6} are constructive, we do not claim that they lead to efficient algorithms. Finally, we note that while Theorem~\ref{thm:main_theorem_6} holds, the conclusion of Theorem~\ref{thm:main_theorem_5} does not hold if we only have access to the polynomial deck modulo $4$, i.e., $\{\phi^{G \setminus i} \pmod{4}\}_{i \in [n]}$, rather than the full polynomial deck, as demonstrated by the following example.

\begin{example} Let $n \equiv 2 \pmod{4}$, with $n \geq 6$. Consider the star graph $S_n$ with $n$ vertices, where one vertex has degree $n-1$ and the remaining $n-1$ vertices have degree $1$. Then, $\phi^{S_n}(x) = x^n - (n-1)x^{n-2} \equiv x^n - x^{n-2} \pmod{4}$, and the polynomial deck modulo $4$ of $S_n$ is $\{x^{n-1} \pmod{4}\}_{i\in [n]}$. On the other hand, if $D_n$ is the graph formed by $n$ vertices with no edges, then $\phi^{D_n}(x) = x^n$, and the polynomial deck modulo $4$ is also $\{x^{n-1} \pmod{4}\}_{i\in [n]}$.
\end{example}

In Section~\ref{sec:proof_main_theorem}, we prove Theorems~\ref{thm:main_theorem} and~\ref{thm:main_theorem_2} as corollaries of a more general statement and also prove Theorem~\ref{thm:main_theorem_3}. In Section~\ref{sec:charac_Gc}, we prove Theorem~\ref{thm:main_theorem_6}, and in Section~\ref{sec:reconstruction_mod4}, we prove Theorems~\ref{thm:main_theorem_4} and~\ref{thm:main_theorem_5}. Finally, in Section~\ref{sec:questions}, we provide motivation for this work, originating from homomorphism counts, and present some related questions.

%%%%%%%%%%%%%%%%%%%%%%%%%%%%%%%%%%%%%%%%%%%%%%%%%%%%%%%%%%%%%%%%%%%%%%%%%%%%%%%%

\section{Efficient reconstruction}\label{sec:proof_main_theorem}\

In this section, we prove Theorems~\ref{thm:main_theorem} and~\ref{thm:main_theorem_2} as corollaries of a more general result. Our proof of this result follows some ideas introduced by Hagos~\cite{hagos2000characteristic} and uses walk generating function identities due to Godsil and McKay~\cite{GodsilMcKay}. We consider the following walk generating functions:

\begin{equation}\label{eq:closed_walk}
w^G_{i, i}(x)\defeq\displaystyle\sum_{k\geq 0}\dfrac{e_i^TA^ke_i}{x^{k+1}} = \dfrac{\phi^{G\setminus i}}{\phi^G}(x),
\end{equation}
\begin{equation}\label{eq:all_closed_walks}
\displaystyle\sum_{i\in [n]}w^G_{i,i}(x)=\displaystyle\sum_{k\geq 0}\dfrac{\tr{A^k}}{x^{k+1}}=\dfrac{\partial_x\phi^G}{\phi^G}(x),
\end{equation}
\begin{equation}\label{eq:walk}
w^G_i(x)\defeq\displaystyle\sum_{k\geq 0}\dfrac{e_i^TA^k\mathbf{1}}{x^{k+1}},
\end{equation}
\begin{equation}\label{eq:all_walks}
w^G(x)\defeq\displaystyle\sum_{i\in [n]} w^G_i(x)=\displaystyle\sum_{k\geq 0}\dfrac{\mathbf{1}^TA^k\mathbf{1}}{x^{k+1}}=\dfrac{(-1)^n\phi^{\overline{G}}(-x-1)-\phi^G(x)}{\phi^G(x)}.
\end{equation}

The generating functions in Equations~\eqref{eq:closed_walk} and~\eqref{eq:walk} count the total number of closed walks and walks starting at a given vertex $i$ in the graph $G$, respectively, while the generating functions in Equations~\eqref{eq:all_closed_walks} and~\eqref{eq:all_walks} count the total number of closed walks and walks in the graph $G$, respectively. Note that the coefficients of $w^G_i(x)$ correspond to the $i$-th row of the walk matrix $W^G$. Godsil and McKay~\cite{GodsilMcKay} (or see~\cite{hagos2000characteristic}) obtained a formula that connects some of these walk generating functions.

\begin{theorem}[Equation 2 in~\cite{hagos2000characteristic}]\label{thm:godsil_mckay} Let $G$ be a graph and $i$ one of its vertices. Then,
\[
w^G(x) = w^{G \setminus i}(x) + \frac{w^G_i(x)^2}{w^G_{i,i}(x)}.
\]
\end{theorem}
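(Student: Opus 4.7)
The plan is to reduce this identity to a direct application of the Schur complement formula for block matrix inverses. First, I would sum the geometric series in Equations~\eqref{eq:closed_walk}--\eqref{eq:all_walks} to obtain the closed forms $w^G_{i,i}(x) = e_i^T(xI-A)^{-1}e_i$, $w^G_i(x) = e_i^T(xI-A)^{-1}\mathbf{1}$, $w^G(x) = \mathbf{1}^T(xI-A)^{-1}\mathbf{1}$, and analogously $w^{G\setminus i}(x) = \mathbf{1}_{-i}^T(xI - A_{G\setminus i})^{-1}\mathbf{1}_{-i}$, where $\mathbf{1}_{-i}$ denotes the all-ones vector indexed by $V(G)\setminus\{i\}$. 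In this way the theorem becomes a purely linear-algebraic identity for the resolvent of $M := xI - A$, working in the field of rational functions in $x$.

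Next, I would partition $M$ by separating the row and column indexed by $i$, writing
\[M = \begin{pmatrix} x & c^T \\ c & M' \end{pmatrix}\]
with $M' = xI - A_{G\setminus i}$ and $c$ the appropriate restriction of $-Ae_i$. Setting $s := x - c^T(M')^{-1}c$ for the Schur complement of $M'$, the standard block-inversion formula expresses every entry of $M^{-1}$ in terms of $(M')^{-1}$, $c$, and $s$. Reading off the three relevant bilinear forms yields $w^G_{i,i}(x) = 1/s$, so $s = 1/w^G_{i,i}(x)$; $w^G_i(x) = (1-\alpha)/s$ with $\alpha := c^T(M')^{-1}\mathbf{1}_{-i}$; and, using the symmetry $\mathbf{1}_{-i}^T(M')^{-1}c = \alpha$, $w^G(x) = w^{G\setminus i}(x) + (1-\alpha)^2/s$. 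Combining these three identifications gives $(1-\alpha)^2/s = (w^G_i(x))^2 \cdot s = (w^G_i(x))^2/w^G_{i,i}(x)$, which is exactly the claimed identity.

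The main source of difficulty is bookkeeping rather than conceptual insight: one must keep the sign conventions straight (the off-diagonal block of $xI-A$ carries a minus from $A$, but this sign squares away in the Schur complement), and verify that deleting row and column $i$ of $xI - A$ really produces $xI - A_{G\setminus i}$. A purely combinatorial proof is also possible via the first-return decomposition of walks in $G$ according to their first and last visits to $i$, but the algebraic route through the Schur complement is the cleanest.
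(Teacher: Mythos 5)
Your proof is correct, and the argument is clean. The paper itself does not prove this theorem---it cites it as Equation~2 of Hagos, which in turn goes back to Godsil and McKay---so there is no in-paper proof to compare against. The Schur-complement route you take (block-partition $xI-A$ around vertex $i$, identify $w^G_{i,i}$, $w^G_i$, $w^G$ with the relevant bilinear forms in the resolvent, and read everything off the block-inverse formula) is essentially the standard way these walk-generating-function identities are derived in the literature, and your bookkeeping checks out: with $s$ the Schur complement of $M' = xI - A_{G\setminus i}$ and $\alpha = c^T(M')^{-1}\mathbf{1}_{-i}$, one indeed gets $w^G_{i,i}=1/s$, $w^G_i=(1-\alpha)/s$, and $w^G = w^{G\setminus i} + (1-\alpha)^2/s$, and the three combine to the claimed identity over the field of rational functions (where both $\det M = \phi^G$ and $\det M' = \phi^{G\setminus i}$ are nonzero).

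One small wording nit: you say the sign on the off-diagonal block ``squares away in the Schur complement.'' That is true for $s = x - c^T(M')^{-1}c$, where $c$ appears twice, but $\alpha$ is linear in $c$, so its sign does \emph{not} square away there---it is simply absorbed into the definition of $\alpha$ and the block-inverse entries consistently. The final identity happens to depend only on $(1-\alpha)^2/s$, which is what makes the conclusion sign-robust; that is worth stating precisely rather than waving at. Your alternative remark about the combinatorial first-return decomposition is also a valid route and would avoid linear algebra entirely, at the cost of a more delicate inclusion--exclusion over visits to $i$.
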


We now present some preliminary results that will be needed for the proof of our main result. The following results provide elementary properties of generating functions that will be useful.

\begin{proposition}\label{prop:gen_basics} Consider $S(x) = \sum_{k \geq 0} \frac{s_k}{x^{k+1}}$ and $T(x) = \sum_{k \geq 0} \frac{t_k}{x^{k+1}}$ with $s_k, t_k \in \Cds$ for all $k \geq 0$. If we know the first $m$ coefficients of $S$ and $T$, then we can  compute the first $m$ and $m+1$ coefficients of $S \pm T$ and $S \cdot T$, respectively. Furthermore, if we know $s_0\neq 0$ and the first $m$ coefficients of $S^2$, then we can  compute the first $m-1$ coefficients of $S$.
\end{proposition}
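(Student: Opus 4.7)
The proposition concerns elementary manipulations of formal Laurent series in $1/x$, so my plan is to treat each of the three claims separately, since each reduces to bookkeeping around the Cauchy product. The sum and difference are immediate: the coefficient of $x^{-(k+1)}$ in $S\pm T$ is simply $s_k\pm t_k$, so the first $m$ coefficients of $S$ and $T$ yield the first $m$ coefficients of $S\pm T$.

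For the product $S\cdot T$, my plan is to expand via the convolution
\[
S(x)T(x)=\sum_{i,j\ge 0}\frac{s_i t_j}{x^{i+j+2}}=\sum_{k\ge 1}\frac{1}{x^{k+1}}\sum_{i=0}^{k-1}s_i t_{k-1-i},
\]
and then observe that the $k$-th coefficient of $S\cdot T$ depends only on $s_0,\dots,s_{k-1}$ and $t_0,\dots,t_{k-1}$. The index shift from $x^{i+j+1}$ to $x^{i+j+2}$ introduced by multiplication is precisely what supplies the extra coefficient: the first $m+1$ coefficients of $S\cdot T$ are computable from the first $m$ coefficients of each factor.

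For the square-root claim, writing $S^2=\sum_{k\ge 0}u_k/x^{k+1}$, the same Cauchy product yields $u_0=0$, $u_1=s_0^2$, and for $k\ge 2$
\[
u_k=2s_0\,s_{k-1}+\sum_{i=1}^{k-2}s_i s_{k-1-i}.
\]
I would then induct on $k$: once $s_0,\dots,s_{k-2}$ have been recovered, the displayed identity is a linear equation for $s_{k-1}$ with nonzero coefficient $2s_0$, so the value of $u_k$ determines $s_{k-1}$. Running this for $k=2,\dots,m-1$ produces $s_0,\dots,s_{m-2}$, i.e., the first $m-1$ coefficients of $S$. The hypothesis $s_0\ne 0$ is exactly the condition that makes the recurrence invertible; the residual sign ambiguity produced by $u_1=s_0^2$ is just a global sign on $S$, which in the intended applications (where $s_0$ is a known walk count) is already fixed.

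There is no real obstacle beyond careful index tracking: the asymmetry between the extra coefficient gained under multiplication and the one coefficient lost under square-root extraction is entirely an artifact of the shift $x^{-(k+1)}\mapsto x^{-(k+2)}$ that the convolution introduces.
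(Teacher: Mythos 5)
Your proof is correct and follows the same route as the paper's: componentwise addition, the Cauchy-product index shift for the extra coefficient, and the recursive extraction $2s_0 s_{k-1}=u_k-\sum_{i=1}^{k-2}s_i s_{k-1-i}$ for the square-root claim. Your remark about the global sign ambiguity from $u_1=s_0^2$ is a fair point of additional care the paper glosses over; in the applications $s_0$ is known exactly (it equals $1$), so the issue does not arise.
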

\begin{proof} If we know the first $m$ coefficients of $S$ and $T$, then clearly, we can obtain the first $m$ coefficients of $S \pm T$. Note that the first coefficient of $S \cdot T$ is 0, and for $k \geq 2$, the $k$-th coefficient is given by $\sum_{i=0}^{k-2} s_i t_{k-2-i}$. Therefore, the first $m+1$ coefficients of $S \cdot T$ are also determined.

Let $S(x)^2 = \sum_{k \geq 0} \frac{u_k}{x^{k+1}}$ with $u_k \in \Cds$ for every $k \geq 0$, and assume we know its first $m$ coefficients. Observe that $u_0 = 0$ and for $k \geq 1$, we have $u_k = \sum_{i=0}^{k-1} s_i s_{k-1-i}$, which can be rewritten as $2s_0 s_{k-1} = u_k - \sum_{i=1}^{k-2} s_i s_{k-1-i}$. Thus, if we know $u_k$ and $s_0, \dots, s_{k-2}$, we can compute $2s_0 s_{k-1}$. Since we know $s_0 \neq 0$, we can then determine $s_{k-1}$. By iterating this procedure, we can recursively compute the first $m$ coefficients of $S$.
\end{proof}

\begin{proposition}\label{prop:gen_basics_2} Let $p(x)=a_0 x^{n-1}+a_1 x^{n-2}+\cdots +a_{n-1}$ and $q(x)=x^n+b_1 x^{n-1}+\cdots +b_n$ be polynomials in $\Cds[x]$. Then, $\frac{p}{q}(x)=\sum_{k\geq 0}\frac{s_k}{x^{k+1}}$ with
\[\label{eq:gen_1}
\left[ {\begin{array}{ccccc}
	s_0  & 0 & \cdots & \cdots  & 0 \\
    s_1 & \ddots & \ddots & \ddots  & \vdots \\
    \vdots & \ddots & \ddots & \ddots  & 0 \\ 
    s_{n-1} & \cdots & s_1 & s_0 & 0 \\
	\end{array} } \right]\cdot \left[ {\begin{array}{c}
	1 \\
    b_1 \\
    b_2 \\ 
    \vdots \\ 
    b_n \\
	\end{array} } \right] = \left[ {\begin{array}{c}
	a_0 \\
    a_1 \\
    \vdots \\ 
    a_{n-1} \\
	\end{array} } \right],
\]
\noindent and,
\[
\left[ {\begin{array}{ccccc}
	s_n & \cdots & \cdots & s_1 & s_0 \\
    \vdots & \ddots & \ddots & \ddots  & s_1 \\
    \vdots & \ddots & \ddots & \ddots  & \vdots \\
    s_{2n-1} & \cdots & \cdots & s_n & s_{n-1} \\
	\end{array} } \right]\cdot \left[ {\begin{array}{c}
	1 \\
    b_1 \\
    b_2 \\ 
    \vdots \\ 
    b_n \\
	\end{array} } \right] = \left[ {\begin{array}{c}
	0 \\
    \vdots \\
    \vdots \\
    0 \\
	\end{array} } \right].
\]
In particular, if we know the top $m$ coefficients of $p$ and $q$, then we can  compute $s_0,\dots, s_{m-1}$.
\end{proposition}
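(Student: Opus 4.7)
The plan is to start from the defining identity $p(x) = q(x)\cdot S(x)$, where $S(x) = \sum_{k \geq 0} s_k / x^{k+1}$, and to match coefficients of each power of $x$ on both sides. Setting $b_0 = 1$ and expanding, every term of the product has the form $b_j s_k\, x^{\,n-j-k-1}$ with $0 \leq j \leq n$ and $k \geq 0$. Grouping terms by the exponent $n-j-k-1$ splits naturally into two regimes: the nonnegative powers, which must reproduce $p(x)$, and the strictly negative powers, which must vanish because $p$ is a polynomial.

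For the nonnegative regime, matching the coefficient of $x^{\,n-1-i}$ for $0 \leq i \leq n-1$ forces $j+k = i$, giving $a_i = \sum_{k=0}^{i} b_{i-k}\, s_k$. These are precisely the rows of the first displayed matrix identity; the last column is identically zero because $b_n$ only ever contributes when $i \geq n$, which is outside the range. For the negative regime, the coefficient of $x^{\,n-1-m}$ for $m \geq n$ must vanish, yielding $\sum_{k=0}^{n} b_k\, s_{m-k} = 0$; restricting to $m = n, n+1, \ldots, 2n-1$ produces exactly the rows of the second matrix identity.

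For the final claim, since $b_0 = 1$, the $i$-th row of the first system may be solved for $s_i$ as $s_i = a_i - \sum_{k=0}^{i-1} b_{i-k}\, s_k$, so each $s_i$ is determined by $a_i$, the scalars $b_1, \ldots, b_i$, and the previously computed $s_0, \ldots, s_{i-1}$. Iterating this recursion, the top $m$ coefficients of $p$ and $q$ suffice to recover $s_0, \ldots, s_{m-1}$ whenever $m \leq n$; if $m > n$, then all coefficients of $q$ are already in hand, and the recursion extends via the second matrix identity, which expresses $s_j$ purely in terms of $s_{j-1}, \ldots, s_{j-n}$. No serious obstacle arises here, as the argument is essentially a bookkeeping exercise once the product $q \cdot S$ is organized according to the sign of the exponent of $x$; the only mild subtlety worth flagging is that the first matrix has $n+1$ columns rather than $n$, with a trailing zero column reflecting the absence of $b_n$ from the polynomial-part equations.
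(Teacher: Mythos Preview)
Your proof is correct and follows exactly the same approach as the paper: write $p(x)=q(x)\sum_{k\geq 0}\frac{s_k}{x^{k+1}}$ and compare coefficients on both sides. The paper's proof is a one-line sketch, and your write-up simply carries out that coefficient comparison explicitly, including the recursion for the final claim.
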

\begin{proof} This immediately follows by writing $p(x)=q(x)\sum_{k\geq 0}\frac{s_k}{x^{k+1}}$ and comparing coefficients on both sides.
\end{proof}

\begin{proposition}\label{prop:gen_basics_3} Let $S(x)=\sum_{k\geq 0} \frac{s_k}{x^{k+1}}$ with $s_k$ in $\Cds$ for every $k\geq 0$. If $S(x)$ is rational, then it can be written as $(p/q)(x)$ with $p,q\in\Cds[x]$, $q$ monic, $\gcd(p, q)=1$ and $m\defeq\deg(q)\geq\deg(p)+1$. In this case,
\[
\left[ {\begin{array}{ccccc}
	s_{l-1} & \cdots & s_1 & s_0 \\
    \vdots & \ddots & \ddots  & s_1 \\
    \vdots & \ddots & \ddots  & \vdots \\
    s_{2l-2} & \cdots & \cdots & s_{l-1} \\
	\end{array} } \right]
\]
\noindent is invertible if, and only if, $l\leq m$.
\end{proposition}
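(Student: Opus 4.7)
The plan is to translate the invertibility of the matrix into a question about formal Laurent series, and then exploit $\gcd(p,q)=1$ to establish both directions.

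First, I would identify a column vector $(d_1,\dots,d_l)^T$ of length $l$ with the polynomial $g(x)=d_1 x^{l-1}+d_2 x^{l-2}+\cdots+d_l$ of degree less than $l$. A direct computation shows that the entries of the matrix-vector product are exactly the coefficients of $1/x,1/x^2,\dots,1/x^l$ in the formal Laurent series $g(x)S(x)$. Hence the matrix is singular if and only if there exists a nonzero polynomial $g$ of degree less than $l$ such that the first $l$ principal-part coefficients of $gS$ vanish. Performing polynomial division $g(x)p(x)=h(x)q(x)+r(x)$ with $\deg r<m$ yields $gS=h+r/q$, and since $h$ contributes only nonnegative powers of $x$, this vanishing is equivalent to the first $l$ coefficients of $r/q$ being zero. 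Applying Proposition~\ref{prop:gen_basics_2} to $r/q$ reduces this further to $\deg r<m-l$.

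For the direction $l>m\Rightarrow$ singular, I would exhibit an explicit witness: setting $g(x)=x^{l-1-m}q(x)$, which has degree $l-1<l$, one computes $gp=x^{l-1-m}qp$, which is divisible by $q$, so the remainder $r$ is zero and trivially satisfies $\deg r<m-l$. This is equivalent to the classical linear recurrence $\sum_{i=0}^{m} b_i s_{k-i}=0$ for $k\geq m$, with $q(x)=x^m+\sum_{i=1}^{m} b_i x^{m-i}$, which follows from $p=qS$ by the second identity in Proposition~\ref{prop:gen_basics_2}.

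For the direction $l\leq m\Rightarrow$ invertible, suppose for contradiction that a nonzero $g$ with $\deg g<l$ produces $r$ with $\deg r<m-l$. Since $\gcd(p,q)=1$, multiplication by $p$ modulo $q$ is an automorphism of $\Cds[x]/(q)$, and its restriction to the $l$-dimensional subspace of polynomials of degree less than $l$ is injective, with image an $l$-dimensional subspace of the $m$-dimensional space $\Cds[x]/(q)$. The constraint forces this image to meet the $(m-l)$-dimensional subspace of polynomials of degree less than $m-l$ nontrivially. The crux of this direction is to combine $\gcd(p,q)=1$ with the degree identity $\deg g+\deg p=\deg(hq+r)$ coming from $gp=hq+r$ to conclude that $g$ must be zero.

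The main obstacle is this last step in the $(\Leftarrow)$ direction: the two subspaces of $\Cds[x]/(q)$ have complementary dimensions ($l$ and $m-l$ in an ambient $m$-dimensional space), so one might expect trivial intersection on dimensional grounds alone, but making this precise requires genuinely using coprimality and the specific form of the subspaces. A natural route I would try is to work out the image of the multiplication-by-$p$ map through a partial fraction decomposition of $1/q$ (or via the Chinese remainder theorem applied to the factorization of $q$), which makes the interaction between $p$ and the low-degree subspace of $\Cds[x]/(q)$ explicit enough to force $g=0$.
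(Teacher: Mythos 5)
Your polynomial-division framework is correct and in fact more transparent than the paper's argument: identifying a kernel vector with a polynomial $g$ of degree less than $l$, observing that the matrix--vector product records the coefficients of $x^{-1},\dots,x^{-l}$ in $gS$, and reducing singularity to the existence of a nonzero $g$ with $\deg g<l$ and $\deg(gp\bmod q)<m-l$ is all right, as is the easy direction via $g=x^{l-1-m}q$. (The paper's proof asserts that a linear dependence among the columns ``corresponds to a linear recurrence of length at most $l-1$,'' silently promoting $l$ equations to a recurrence valid for all indices; this is the same gap you flag.) However, the obstacle you identify in the direction $l\leq m\Rightarrow$ invertible is not a technicality to be filled by partial fractions or CRT: that implication is false as stated. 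Take $p(x)=x^2+x+1$ and $q(x)=x^3$, so $\gcd(p,q)=1$, $m=3$, and $s_0=s_1=s_2=1$, $s_k=0$ for $k\geq 3$. For $l=2\leq m$ the matrix $\left(\begin{smallmatrix}s_1 & s_0\\ s_2 & s_1\end{smallmatrix}\right)=\left(\begin{smallmatrix}1&1\\1&1\end{smallmatrix}\right)$ is singular; in your language $g(x)=x-1$ is a witness, since $gp=x^3-1\equiv -1\pmod{x^3}$ has degree $0<m-l=1$. Two complementary-dimension subspaces of $\Cds[x]/(q)$ need not meet trivially; coprimality does not prevent it, nor does assuming $q$ squarefree (e.g.\ $p=\tfrac92 x^2+\tfrac32 x-4$, $q=x^3-x$, $l=2$ gives $s_0=\tfrac92$, $s_1=\tfrac32$, $s_2=\tfrac12$ and $s_1^2-s_0s_2=0$).

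What both the proposition and its stated proof are missing is a positivity hypothesis that holds in the only place the result is used. In Proposition~\ref{prop:gen_basics_4} and the proof of Theorem~\ref{thm:main_general} one has $S=w^G(x)=\sum_{k}\mathbf{1}^TA^k\mathbf{1}\,x^{-k-1}$ with $A$ symmetric, so the $l\times l$ matrix (after reversing columns) is the Gram matrix of $\mathbf{1},A\mathbf{1},\dots,A^{l-1}\mathbf{1}$. A Gram matrix is positive semidefinite and singular exactly when its defining vectors are linearly dependent, which by the Krylov structure of $W^G$ happens exactly when $l>\mathrm{rank}(W^G)=m$. That positive semidefiniteness is the extra ingredient needed to make the dimension count tight, and it is precisely the step you (correctly) were unable to complete in the general setting.
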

\begin{proof} This is a consequence of Proposition~\ref{prop:gen_basics_2}. Observe that $m$ is the length of the smallest linear recurrence satisfied by $(s_k)_{k\geq 0}$. Consider the matrix in the statement for some $l$. This matrix is not invertible if and only if the columns of the matrix are linearly dependent, which corresponds to a linear recurrence of length at most $l-1$ for $(s_k)_{k\geq 0}$. 
\end{proof}

\begin{proposition}\label{prop:gen_basics_4} Let $w^G(x)=\frac{p}{q}(x)$, where $p, q\in\Cds[x]$, $q$ is monic, and $\gcd(p,q)=1$. Then, the degree of $q$ is equal to the rank of the walk matrix $W^G$.
\end{proposition}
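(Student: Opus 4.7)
The plan is to combine Proposition~\ref{prop:gen_basics_3} with a Hankel--Gram identification. From Equation~\eqref{eq:all_walks} we have $w^G(x)=\sum_{k\geq 0}\frac{s_k}{x^{k+1}}$ with $s_k=\mathbf{1}^T A^k \mathbf{1}$. Since the adjacency matrix $A$ is symmetric, $s_{i+j}=(A^i\mathbf{1})^T(A^j\mathbf{1})$, so the $l\times l$ Hankel matrix appearing in Proposition~\ref{prop:gen_basics_3} coincides---up to reversing the order of columns, which preserves invertibility---with the Gram matrix of the vectors $\mathbf{1}, A\mathbf{1}, \dots, A^{l-1}\mathbf{1}$.

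Next, I would use the standard fact that the rank of a Gram matrix equals the rank of the family of vectors it is built from. Let $r\defeq\operatorname{rank}(W^G)$. By the Cayley--Hamilton theorem, the cyclic subspace $\operatorname{span}\{A^k\mathbf{1}:k\geq 0\}$ is already spanned by the columns of $W^G$ and so has dimension $r$. Consequently $\mathbf{1}, A\mathbf{1},\dots, A^{l-1}\mathbf{1}$ are linearly independent if and only if $l\leq r$, and therefore the Hankel matrix above is invertible precisely for $l\leq r$.

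On the other hand, Proposition~\ref{prop:gen_basics_3} says that the same Hankel matrix is invertible if and only if $l\leq \deg q$. Comparing the two criteria yields $\deg q = r = \operatorname{rank}(W^G)$, as desired. The argument is essentially a one-line observation once one spots the Hankel--Gram identification, so I do not anticipate any real obstacle; the only subtlety worth flagging is that the symmetry of $A$ is what converts the scalars $\mathbf{1}^TA^{i+j}\mathbf{1}$ into inner products of powers of $A$ applied to $\mathbf{1}$, without which the rank comparison would not go through.
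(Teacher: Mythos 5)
Your proof is correct. The paper itself does not give an argument here: it simply cites~\cite[p.~3]{godsil2012controllable} together with Equation~\eqref{eq:all_walks}, so the statement is treated as a known fact. You instead supply a self-contained proof, and it is a nice one. The key observation that $s_{i+j}=\mathbf{1}^TA^{i+j}\mathbf{1}=(A^i\mathbf{1})^T(A^j\mathbf{1})$ (using the symmetry of $A$) turns the Hankel matrix from Proposition~\ref{prop:gen_basics_3}, after a column reversal, into the Gram matrix of $\mathbf{1},A\mathbf{1},\dots,A^{l-1}\mathbf{1}$, so its invertibility is equivalent to linear independence of those vectors. Combined with the Krylov-sequence fact that the first dependency among the powers $A^k\mathbf{1}$ occurs exactly at $k=\operatorname{rank}(W^G)$ (Cayley--Hamilton ensures the span of the infinite Krylov sequence equals the column span of $W^G$), this pins down the invertibility threshold $l$ to be $\operatorname{rank}(W^G)$, and matching against the threshold $\deg q$ from Proposition~\ref{prop:gen_basics_3} finishes the argument. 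Compared with the paper's bare citation, your route makes the result verifiable from first principles inside the paper and makes explicit exactly where the symmetry of $A$ is used; the trade-off is a slightly longer exposition for what the paper treats as background.
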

\begin{proof} This is true by~\cite[p. 3]{godsil2012controllable} and Equation~\eqref{eq:all_walks}.
\end{proof}

Now we are ready for the proof of the main result of this section.

\begin{theorem}\label{thm:main_general} Let $s \leq t \leq n$. Assume that we know the top $s$ and $t$ coefficients of $\phi^{\overline{G} \setminus i}$ and $\phi^{G \setminus i}$, respectively, for every $i$ in $[n]$. If $s + \frac{t}{2} \geq n + 2$, then it is possible to  determine whether the rank of the walk matrix $W^G$ is at least $n + 1 - t$, and if so, to  compute $(\phi^G, \phi^{\overline{G}})$.
\end{theorem}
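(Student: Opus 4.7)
The plan is to convert the partial deck information into partial walk generating functions, assemble enough of the walk matrix $W^G$ to expose the factorization $\phi^G=q\,\tilde q$ where $q$ is the minimal polynomial of $A$ acting on the cyclic subspace $K=\langle\mathbf{1},A\mathbf{1},A^2\mathbf{1},\ldots\rangle$ (of degree $r=\operatorname{rank}(W^G)$ by Proposition~\ref{prop:gen_basics_4}) and $\tilde q=\phi^G/q$ has degree $n-r$, and then to recover $\phi^{\overline G}$ from~\eqref{eq:all_walks}.

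First I would apply Theorem~\ref{thm:charac_derivative} to $G$ and to $\overline G$ to obtain the top $t$ coefficients of $\phi^G$ and the top $s$ coefficients of $\phi^{\overline G}$. Proposition~\ref{prop:gen_basics_2} then yields the first $t$ coefficients of each $w^G_{i,i}$ from~\eqref{eq:closed_walk}, and the first $s-1$ coefficients of $w^G$ and of each $w^{G\setminus i}$ from~\eqref{eq:all_walks}; the loss of one coefficient on the $w^G$ side comes from the cancellation of leading terms in the numerator $(-1)^n\phi^{\overline G}(-x-1)-\phi^G$, whose top $m$ coefficients depend on the top $m+1$ of both $\phi^G$ and $\phi^{\overline G}$. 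Theorem~\ref{thm:godsil_mckay} rewritten as $w^G_i(x)^2=w^G_{i,i}(x)\bigl(w^G(x)-w^{G\setminus i}(x)\bigr)$ together with the first part of Proposition~\ref{prop:gen_basics} produces the first $s$ coefficients of $w^G_i(x)^2$, from which the second part of Proposition~\ref{prop:gen_basics} extracts the first $s-1$ coefficients of $w^G_i$ itself, the sign being fixed by the leading coefficient $e_i^T\mathbf{1}=1$. Running over $i$ assembles the first $s-1$ columns $\mathbf{1},A\mathbf{1},\ldots,A^{s-2}\mathbf{1}$ of $W^G$ as explicit integer vectors.

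The partial walk matrix thus built has rank $\min(r,s-1)$, and I would split into two cases. If this rank is strictly less than $s-1$, then $r\leq s-2$ and the first linear dependence among the known columns exposes both $r$ and the coefficients of $q$ directly; I then check whether $r\geq n+1-t$, and if so iteratively solve the relations $c_k=\sum_{i+j=k}q_i\tilde q_j$ for $k=0,1,\ldots,n-r$ using the top $n-r+1\leq t$ coefficients of $\phi^G$ to recover the coefficients of $\tilde q$, which gives $\phi^G=q\,\tilde q$. Otherwise the first $s-1$ columns are linearly independent, so $r\geq s-1$; because the hypothesis $s+t/2\geq n+2$ together with $s\leq n$ implies $s+t\geq n+2$, we get $s-1\geq n+1-t$ and the rank condition automatically holds, but now $q$ is no longer visible from the columns alone.

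The main obstacle is reconstructing $\phi^G$ in this linearly-independent-columns regime, since neither $q$ nor $r$ is immediately available. My plan is to set up the joint system in the unknowns $q_1,\ldots,q_r,\ \tilde q_1,\ldots,\tilde q_{n-r}$ together with the missing values $s_{s-1},\ldots,s_{r-1}$ of $w^G$, using the bilinear relations $c_k=\sum_{i+j=k}q_i\tilde q_j$ for $k=1,\ldots,t-1$ with the known $s_0,\ldots,s_{s-2}$ supplied as parameters, and to identify the true $r\in[n+1-t,\,s+t-n-2]$ as the unique value for which this system is consistent (the interval is non-empty precisely because $s+t/2\geq n+2$ together with $s\leq t$ forces $s+2t\geq 2n+4$). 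Rigorously proving that the resulting coefficient matrix is invertible for the correct $r$, presumably via a Hankel-determinant argument in the spirit of Proposition~\ref{prop:gen_basics_3}, is the delicate technical point. Once $\phi^G$ is in hand, the rational function $w^G=p/q$ is fully determined and $\phi^{\overline G}$ follows immediately from $(-1)^n\phi^{\overline G}(-x-1)=\phi^G(x)(w^G(x)+1)$.
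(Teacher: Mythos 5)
Your proposal reproduces the paper's setup correctly through the recovery of the first $s-1$ coefficients of each $w^G_i$ (and hence the first $s-1$ columns of $W^G$). After that point you diverge, and the divergence opens a genuine gap that you yourself flag.

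The key step you are missing is the quadratic identity
\[
\sum_{i\in[n]} \bigl(e_i^TA^k\mathbf{1}\bigr)\bigl(e_i^TA^l\mathbf{1}\bigr)
= \mathbf{1}^TA^k\Bigl(\sum_{i\in[n]}e_ie_i^T\Bigr)A^l\mathbf{1}
= \mathbf{1}^TA^{k+l}\mathbf{1},
\]
which is Equation~\eqref{eq:main_mechanism} in the paper. Since you already know $e_i^TA^k\mathbf{1}$ for $k\le s-2$ and all $i$, this identity hands you $\mathbf{1}^TA^m\mathbf{1}$ for all $m\le 2s-4$, i.e.\ the first $2s-3$ coefficients of $w^G$, not just the first $s-1$. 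This ``doubling'' is precisely what makes the hypothesis $s+\frac{t}{2}\ge n+2$ (equivalently $2s-4\ge 2n-t$) pay off: you then know $w_0,\dots,w_{2n-t}$ together with $b_1,\dots,b_{t-1}$, and the second displayed system of Proposition~\ref{prop:gen_basics_2} becomes a single \emph{linear} Hankel system in the unknowns $b_t,\dots,b_n$. The rank test is exactly Proposition~\ref{prop:gen_basics_3} applied with $l=n+1-t$: the $(n+1-t)\times(n+1-t)$ Hankel block formed from $w_{n-t},\dots,w_{2n-2t}$ is invertible if and only if $\operatorname{rank}(W^G)\ge n+1-t$, and in that case the linear system determines $b_t,\dots,b_n$ uniquely.

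By instead stopping at the first $s-1$ columns and trying to split off $q\,\tilde q$ directly, you end up in the regime where the known columns are independent but $q$ is invisible, and you are forced into the bilinear system with the extra unknowns $s_{s-1},\dots,s_{r-1}$. You acknowledge that you cannot prove the coefficient matrix of that system is invertible; that is not a cosmetic worry but the crux of the theorem. A signal that something is lost is that your case analysis only uses $s+t\ge n+2$, which is strictly weaker than the actual hypothesis $s+\frac{t}{2}\ge n+2$ --- the extra strength is exactly what the doubling identity converts into known coefficients of $w^G$. Once you replace your bilinear step by Equation~\eqref{eq:main_mechanism} and the resulting linear Hankel system, the rest of your outline (recovering $\phi^{\overline G}$ via Equation~\eqref{eq:all_walks} and the first system of Proposition~\ref{prop:gen_basics_2}) matches the paper and closes the argument.
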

\begin{proof} First, note that by Theorem~\ref{thm:charac_derivative}, we know the top $s$ and $t$ coefficients of $\phi^{\overline{G}}$ and $\phi^G$, respectively. As a consequence, we can determine the top $s-1$ coefficients of $(-1)^n \phi^{\overline{G}}(-x-1) - \phi^G(x)$ and $(-1)^{n-1} \phi^{\overline{G} \setminus i}(-x-1) - \phi^{G \setminus i}(x)$ for every $i$ in $[n]$. By Equation~\eqref{eq:all_walks} and Proposition~\ref{prop:gen_basics_2}, we then know the first $s-1$ coefficients of $w^G(x)$ and $w^{G \setminus i}(x)$ for every $i$ in $[n]$. Finally, by Equation~\eqref{eq:closed_walk} and Proposition~\ref{prop:gen_basics_2}, we can determine the first $s$ coefficients of $w^G_{i,i}(x)$ for every $i$ in $[n]$.

By Theorem~\ref{thm:godsil_mckay}, we have the equation
\[
w^G_i(x)^2 =(w^G(x)-w^{G \setminus i}(x))w^G_{i,i}(x),
\]
from which, by Proposition~\ref{prop:gen_basics}, it follows that we know the first $s$ coefficients of $w^G_i(x)^2$ for every $i$ in $[n]$. Since the first coefficient of $w^G_i(x)$ is 1, by Proposition~\ref{prop:gen_basics}, we can  compute the first $s-1$ coefficients of $w^G_i(x)$ for every $i$ in $[n]$.

But then, by Equation~\eqref{eq:walk}, and since
\begin{align}\label{eq:main_mechanism}
\displaystyle\sum_{i\in [n]} e_i^TA^k\mathbf{1}\cdot e_i^TA^l\mathbf{1} &= \displaystyle\sum_{i\in [n]} \mathbf{1}^TA^k e_ie_i^TA^l\mathbf{1} \nonumber\\ &=\mathbf{1}^TA^k\left(\displaystyle\sum_{i\in[n]} e_ie_i^T\right)A^l\mathbf{1} \nonumber\\ &= \mathbf{1}^TA^{k+l}\mathbf{1},
\end{align}
\noindent for every $k$ and $l$ in $\Zds_{\geq 0}$, we obtain the first $2s-3$ coefficients of $w^G(x)$.

Write $w^G(x)=\sum_{k\geq 0}\frac{w_k}{x^{k+1}}$ and $\phi^G(x)=x^n+b_1 x^{n-1}+\cdots +b_n$. Then, we know $w_k$ for $k\in \{0,1\dots, 2s-4\}$ and $b_j$ for $j\in\{1,\dots, t-1\}$. Using the second equation in Proposition~\ref{prop:gen_basics_2}, and observing that $s+\frac{t}{2}\geq n+2$ is equivalent to $2s-4\geq 2n-t$, we obtain
\begin{equation}\label{eq:determine_coeff}
\left[ {\begin{array}{ccccc}
	w_n & \cdots & \cdots & w_1 & w_0 \\
    \vdots & \ddots & \ddots & \ddots  & w_1 \\
    \vdots & \ddots & \ddots & \ddots  & \vdots \\
    w_{2n-t} & \cdots & \cdots & \cdots & w_{n-t} \\
	\end{array} } \right]\cdot \left[ {\begin{array}{c}
	1 \\
    b_1 \\
    b_2 \\ 
    \vdots \\ 
    b_n \\
	\end{array} } \right] = \left[ {\begin{array}{c}
	0 \\
    \vdots \\
    \vdots \\
    0 \\
	\end{array} } \right],
\end{equation}
\noindent where $w_0,w_1,\dots,w_{2n-t}$ are known. By Propositions~\ref{prop:gen_basics_3} and~\ref{prop:gen_basics_4} the rank of $W^G$ is at least $n+1-t$, if and only if, the following matrix is invertible
\[
\left[ {\begin{array}{ccccc}
	w_{n-t} & \cdots & w_1 & w_0 \\
    \vdots & \ddots & \ddots  & w_1 \\
    \vdots & \ddots & \ddots  & \vdots \\
    w_{2n-2t} & \cdots & \cdots & w_{n-t} \\
	\end{array} } \right].
\]

If the rank of $W^G$ is at least $n + 1 - t$, that is, if the matrix above is invertible, then, by Equation~\eqref{eq:determine_coeff}, we can determine the coefficients $b_j$ for $j \in \{t, \dots, n\}$. Therefore, we can completely recover the characteristic polynomial $\phi^G$. Using Equation~\eqref{eq:all_walks} and the first equation in Proposition~\ref{prop:gen_basics_2}, we can then recover $\phi^{\overline{G}}$. Thus, we conclude that we can  compute the pair of characteristic polynomials $(\phi^G, \phi^{\overline{G}})$.
\end{proof}

Theorems~\ref{thm:main_theorem} and~\ref{thm:main_theorem_2} immediately follow as corollaries of the previous result. We now proceed with the proof of Theorem~\ref{thm:main_theorem_3}.

\begin{proof}[Proof of Theorem~\ref{thm:main_theorem_3}] First, note that by Theorem~\ref{thm:charac_derivative}, we know the top $n$ coefficients of $\phi^G$. From Equations~\eqref{eq:closed_walk} and Proposition~\ref{prop:gen_basics_2}, we can compute $e_i^T A^k e_i$ for every $i$ in $[n]$ and $k$ in $\{0, 1, \dots, n-1\}$. By Lemma 4.2 $(vii)$ of~\cite{sciriha2023polynomial}, we can also determine whether $G$ has cycles of length $4$.

Assume that $G$ has no cycles of length $4$. Then, for every vertex $i$, there is a bijection between walks of length $2$ starting at $i$ and closed walks of length $4$ starting at $i$. Therefore, we have $e_i^T A^2 \mathbf{1} = e_i^T A^4 e_i$, and we can compute $e_i^T A^k \mathbf{1}$ for every $i$ in $[n]$ and $k$ in $\{0, 1, 2\}$. In other words, we can compute the first three columns of the walk matrix $W^G$.

With the first three columns of $W^G$, it is possible to determine whether the rank of $W^G$ is at most $2$. Assume that the rank of $W^G$ is $2$ (the rank $1$ case is analogous and corresponds to $G$ being regular). In this case, we can obtain a non-trivial linear relation
\[
b_0\mathbf{1}+b_1 A\mathbf{1}+A^2\mathbf{1}=\mathbf{0},
\]
where $b_0,b_1\in \Rds$. As a consequence, we have the recurrence
\[
A^k\mathbf{1}=-b_1 A^{k-1}\mathbf{1} -b_0 A^{k-2}\mathbf{1},
\]
for every integer $k\geq 2$. Therefore, we can determine $A^k \mathbf{1}$ for all $k \geq 0$, and thus obtain all the coefficients of $w^G_i(x)$ for every $i$ in $[n]$. Since $w^G(x) = \sum_{i \in [n]} w^G_i(x)$, we can then obtain all the coefficients of $w^G(x)$.

By Equation~\eqref{eq:all_walks} and the second equation in Proposition~\ref{prop:gen_basics_2}, we can then determine $\phi^G$. Finally, by Equation~\eqref{eq:all_walks} and the first equation in Proposition~\ref{prop:gen_basics_2}, we compute $\phi^{\overline{G}}$.
\end{proof}

Finally, we note that the construction in Theorem 4.1 of~\cite{hayat2016note} shows that, given a connected $2$-main graph that is neither a tree, unicyclic, nor regular, and contains at most one $4$-cycle, it is possible to construct another connected $2$-main graph with twice the number of vertices, no $4$-cycles, and that does not belong to any of these graph classes. By repeatedly applying this construction, starting from a known graph with the desired properties (for example, one of the graphs in~\cite{hu2009bicyclic}), we obtain that Theorem~\ref{thm:main_theorem_3} holds for a new infinite family of graphs.

%%%%%%%%%%%%%%%%%%%%%%%%%%%%%%%%%%%%%%%%%%%%%%%%%%%%%%%%%%%%%%%%%%%%%%%%%%%%%%%%

\section{Characteristic polynomial modulo $4$}\label{sec:charac_Gc}

In this section, we prove Theorem~\ref{thm:main_theorem_6} and present some results that will be required in Section~\ref{sec:reconstruction_mod4}.

The following result established by Ji, Thang, Wang, and Zhang~\cite{ji2024new} is stronger than Theorem~\ref{thm:wang_et_al} in that the top coefficients of $\phi^G$ explicitly determine the top coefficients of $\phi^{\overline{G}}\pmod{4}$.

\begin{theorem}[Proposition 3.4 in~\cite{ji2024new}]\label{thm:wang_series} Let $A$ be the adjacency matrix of a graph $G$ and $m\defeq 2^t(2s+1)$ be an integer with $t$ and $s$ in $\Zds_{\geq 0}$. Then,
\[
\mathbf{1}^T A^{m}\mathbf{1} \equiv \dfrac{\tr{A^{2m}}}{2^{t+1}} + \displaystyle\sum_{l=0}^{t+1} \dfrac{\tr{A^{2^l(2s+1)}}}{2^l}\pmod{4}.
\]
\end{theorem}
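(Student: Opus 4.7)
My plan is induction on the $2$-adic valuation $t = v_2(m)$, supported by a single symmetry-based identity modulo $4$. The starting point is the congruence
\[
\mathbf{1}^T A^m \mathbf{1} \equiv \tr{A^m} + \tr{A^{2m}} - \sum_{i=1}^n (A^m)_{ii}^2 \pmod{4},
\]
valid for any symmetric integer matrix $A$ and every $m \geq 1$. I would derive it by expanding $\mathbf{1}^T A^m \mathbf{1} = \tr{A^m} + 2\sum_{i<j}(A^m)_{ij}$ and $\tr{A^{2m}} = \sum_i(A^m)_{ii}^2 + 2\sum_{i<j}(A^m)_{ij}^2$ (using symmetry of $A^m$ together with $(A^{2m})_{ii} = \sum_j (A^m)_{ij}^2$), then substituting $x^2 \equiv x \pmod 2$ on the off-diagonal entries.

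The base case $t = 0$ is then immediate: when $m = 2s+1$ is odd and $G$ is loopless, reversal acts fixed-point-freely on the set of closed walks of length $m$ at a given vertex (a palindromic odd closed walk would force a loop at its midpoint), so $(A^m)_{ii}$ is even and hence $(A^m)_{ii}^2 \equiv 0 \pmod 4$. The fundamental identity collapses to $\mathbf{1}^T A^m \mathbf{1} \equiv \tr{A^m} + \tr{A^{2m}} \pmod 4$, which matches the asserted formula after combining the two copies of $\tr{A^{2r}}/2$.

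For the inductive step, I would subtract the target formulas at $m = 2^t r$ and $2m = 2^{t+1} r$; the telescoping in $l$ collapses, and the induction reduces to the single congruence
\[
\mathbf{1}^T A^{2m} \mathbf{1} - \mathbf{1}^T A^m \mathbf{1} \equiv \frac{\tr{A^{4m}} - \tr{A^{2m}}}{2^{t+1}} \pmod 4.
\]
The right-hand side is integer-valued by Schur's congruence $\tr{B^{2^k}} \equiv \tr{B^{2^{k-1}}} \pmod{2^k}$, applied to $B = A^r$ with $k = t+2$, which gives $2^{t+2} \mid \tr{A^{4m}} - \tr{A^{2m}}$; the left-hand side equals $\sum_j r_j(m)(r_j(m)-1) = 2\sum_j\binom{r_j(m)}{2}$ with $r_j(m) = (A^m\mathbf{1})_j$, and is likewise even. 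The main obstacle will be matching the two expressions modulo $4$, that is, establishing $2^{t+1}\bigl(\mathbf{1}^T A^{2m}\mathbf{1} - \mathbf{1}^T A^m\mathbf{1}\bigr) \equiv \tr{A^{4m}} - \tr{A^{2m}} \pmod{2^{t+3}}$. I would attack this by writing $\tr{A^{4m}} - \tr{A^{2m}} = \sum_{ij}\bigl((A^{2m})_{ij}^2 - (A^m)_{ij}^2\bigr)$, expanding $A^{2m} = A^m A^m$, and bookkeeping the resulting sum by the $2$-adic valuation of each monomial in the entries of $A^m$. The crucial ingredient, I expect, is a second-order refinement of Schur's congruence --- along the lines of $\tr{B^{2^{k+1}}} + \tr{B^{2^k}} \equiv 2\tr{B^{2^{k-1}}} \pmod{2^{k+2}}$ with $B = A^r$ --- which should follow either by iterating the standard Witt-vector-style proof of Schur's theorem or by a direct pairing argument on closed walks of length $2^{k+1}r$ and $2^k r$.
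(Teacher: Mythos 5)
The paper does not itself prove this statement; it cites it as Proposition~3.4 of Ji, Tang, Wang, and Zhang. So I can only assess your proposal on its own terms.

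Your opening identity and base case are correct. The congruence
\[
\mathbf{1}^T A^m \mathbf{1} \equiv \operatorname{tr}(A^m) + \operatorname{tr}(A^{2m}) - \sum_{i=1}^n (A^m)_{ii}^2 \pmod 4
\]
does follow from $\mathbf{1}^T A^m \mathbf{1} = \operatorname{tr}(A^m) + 2\sum_{i<j}(A^m)_{ij}$, $\operatorname{tr}(A^{2m}) = \sum_i(A^m)_{ii}^2 + 2\sum_{i<j}(A^m)_{ij}^2$, and $2x^2 \equiv 2x \pmod 4$; and the reversal argument does show $(A^m)_{ii}$ is even for odd $m$, which settles $t=0$. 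The telescoping that reduces the inductive step to
\[
\mathbf{1}^T A^{2m}\mathbf{1} - \mathbf{1}^T A^m\mathbf{1} \equiv \frac{\operatorname{tr}(A^{4m}) - \operatorname{tr}(A^{2m})}{2^{t+1}} \pmod 4
\]
is also correct, as is the observation that both sides are even integers.

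The genuine gap is that the inductive step is never actually proved --- and the tool you name as the ``crucial ingredient'' is false. The proposed second-order Schur refinement $\operatorname{tr}(B^{2^{k+1}}) + \operatorname{tr}(B^{2^k}) \equiv 2\operatorname{tr}(B^{2^{k-1}}) \pmod{2^{k+2}}$ already fails for adjacency matrices. Take $B = A$ the adjacency matrix of the path $P_3$; then $A$ has eigenvalues $\sqrt{2}, 0, -\sqrt{2}$, so $\operatorname{tr}(A^{2^l}) = 2\cdot 2^{2^{l-1}}$ for $l\geq 1$. With $k=3$ this gives $\operatorname{tr}(A^{16}) + \operatorname{tr}(A^8) - 2\operatorname{tr}(A^4) = 512 + 32 - 16 = 528 = 2^4\cdot 33$, which is not divisible by $2^{k+2}=32$. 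Since $t$ (and hence the needed $k$) can be arbitrarily large, you cannot lean on this congruence. The remainder of your plan for the inductive step (``expanding $A^{2m}=A^m A^m$ and bookkeeping \ldots by the $2$-adic valuation of each monomial'') is only a sketch and does not circumvent the missing lemma. To repair the proof you would need to establish the displayed inductive-step congruence directly --- perhaps by applying your fundamental identity at both $m$ and $2m$ and controlling $\sum_i\bigl((A^{2m})_{ii}^2 - (A^m)_{ii}^2\bigr)$ together with the correct Schur-type estimate on $\operatorname{tr}(A^{4m}) - \operatorname{tr}(A^{2m})$ --- rather than via the false refinement.
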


To understand how Theorem~\ref{thm:wang_series} implies Theorem~\ref{thm:wang_et_al}, consider that, given $\phi^G$, we can use Equation~\eqref{eq:all_closed_walks} to compute $\tr{A^k}$ for all $k\geq 0$. Then, by Theorem~\ref{thm:wang_series}, we can determine $\mathbf{1}^T A^k\mathbf{1}\pmod{4}$ for every $k\geq0$. From Equation~\eqref{eq:all_walks} and the first equation in Proposition~\ref{prop:gen_basics_2}, we can then obtain $(-1)^n \phi^{\overline{G}}(-x-1)-\phi^G(x)\pmod{4}$, which allows us to compute $\phi^{\overline{G}}\pmod{4}$.

Note that if we could compute $\tr{A^{2^t(2s+1)}} \pmod{2^{t+2}}$ for every $t$ and $s$ in $\Zds_{\geq 0}$, then this method would be sufficient to determine $\phi^{\overline{G}} \pmod{4}$. It turns out that this information is already determined by $\phi^G \pmod{4}$, as the next result shows. This result was obtained independently by the author but proved more generally by Anni, Ghitza, and Medvedovsky~\cite{anni2024elementary}.

For a prime number $p$ and integer $m$, let $v_p(m)$ denote the largest natural $k$ such that $p^k$ divides $m$. Let $v_p(0) \defeq \infty$.

\begin{theorem}[\cite{anni2024elementary}, Prop. 12 and 13]\label{thm:deep_power_sum} Let $p^l$ be a prime power and $k\geq 0$. Then, the top $k+1$ coefficients of $\phi^G \pmod{p^l}$ can be computed from $\tr{A^m} \pmod{p^{v_p(m)+l}}$ for all $m$ in $\{0, 1, \dots, k\}$, and conversely, the values of $\tr{A^m} \pmod{p^{v_p(m)+l}}$ for these $m$ can be computed from the top $k+1$ coefficients of $\phi^G \pmod{p^l}$.  
\end{theorem}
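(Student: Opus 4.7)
Write $\phi^G(x) = \sum_{i=0}^{n} (-1)^i e_i\, x^{n-i}$, so the top $k+1$ coefficients of $\phi^G$ are encoded by $(e_0, e_1, \ldots, e_k)$ with $e_0 = 1$, the elementary symmetric functions in the eigenvalues of $A$, and set $p_m \defeq \tr{A^m}$ for the power sums. The plan is to convert between the two sides via Newton's identity
\[
k\, e_k = \sum_{j=1}^{k} (-1)^{j-1} p_j\, e_{k-j}, \qquad e_0 = 1,
\]
which can be solved either for $e_k$ or for $p_k$ and thus drives a simultaneous strong induction on $k$ establishing both directions.

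The base case $k=0$ is trivial since $e_0 = 1$ and $p_0 = n$. For the inductive step of the direction ``$p_m$'s $\to e_m$'s'', assume $e_0, \ldots, e_{k-1}$ are known modulo $p^l$ and the $p_m$ are given at the claimed precision. Rewriting Newton's identity as
\[
k\, e_k = (-1)^{k-1} p_k + \sum_{j=1}^{k-1} (-1)^{k-j-1} e_j\, p_{k-j},
\]
it suffices to determine the right-hand side modulo $p^{v_p(k)+l}$, for then dividing out $p^{v_p(k)}$ and the coprime-to-$p$ factor $k/p^{v_p(k)}$ recovers $e_k$ modulo $p^l$. The converse direction rearranges the same identity for $p_k$ and proceeds analogously, noting that $k e_k$ is automatically known modulo $p^{v_p(k)+l}$ once $e_k$ is known modulo $p^l$.

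\textbf{Main obstacle.} The sharp difficulty is that while $p_k$ is directly known modulo $p^{v_p(k)+l}$, the cross-terms $e_j\, p_{k-j}$ in the Newton sum only appear with naive precision $p^l$ (since $e_j$ is only known modulo $p^l$), which falls short of $p^{v_p(k)+l}$ when $v_p(k) > 0$. To close this gap, I would invoke the classical Dwork-type congruence
\[
\tr{A^{p^r m}} \equiv \tr{A^{p^{r-1} m}} \pmod{p^r}
\]
valid for any integer matrix $A$ and all $r, m \ge 1$, which follows from the Frobenius identity $\tr{B^p} \equiv \tr{B} \pmod{p}$ over $\Fds_p$ together with a standard Hensel-type lifting. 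This encodes that the unknown perturbations of the $p_m$ are not arbitrary but satisfy rigid $p$-adic constraints, and a careful accounting then shows that, after summing across $j$, the combined error in the Newton formula is divisible by $p^{v_p(k)+l}$ as required. An equivalent packaging is via the formal identity
\[
\sum_{i \ge 0} e_i\, t^i = \exp\!\left(\sum_{k \ge 1} (-1)^{k-1} \frac{p_k}{k}\, t^k\right),
\]
whose coefficient-wise integrality over $\Zds_p$ is equivalent, by Dwork's lemma, to the above congruences. I expect the main technical effort to be precisely this precision-bookkeeping, which I would organize by a nested induction on $v_p(k)$ inside the primary induction on $k$, with the rest of the argument being mechanical substitution into Newton's identity.
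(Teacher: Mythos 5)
Your plan correctly locates the heart of the matter — Newton's identity gives $k e_k$ as a combination of $p_j e_{k-j}$, and the naive precision of the cross-terms ($p^l$) falls short of the target $p^{v_p(k)+l}$ whenever $p \mid k$. However, the paper closes this gap by a genuinely different mechanism than the one you propose, and I don't think your route is complete as sketched.

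The paper sidesteps the recursive Newton formula entirely. It uses the \emph{closed-form} expansion of the power sum in terms of elementary symmetric polynomials (Theorem~\ref{thm:deep_power_sum_identity}, a sum over partitions $\lambda \vdash m$ with multinomial coefficients $\tfrac{m}{k(\lambda)}\binom{k(\lambda)}{r_1(\lambda),\dots}$), together with two elementary $p$-adic facts: a Kummer/Legendre-type lower bound $v_p\bigl(\binom{k(\lambda)}{r_1,\dots}\bigr) \geq v_p(k(\lambda)) - v_p(r_j(\lambda))$ (Proposition~\ref{prop:kummer}), and the observation that $x^{p^a} \pmod{p^{a+l}}$ and $p^a x \pmod{p^{a+l}}$ depend only on $x \pmod{p^l}$ (Lemma~\ref{lem:prime_power}). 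Combining these (Corollary~\ref{cor:prime_power_we_know}) gives the required precision \emph{term by term} in the partition sum: either the multinomial coefficient and the $\tfrac{m}{k(\lambda)}$ factor contribute the missing powers of $p$, or the repeated factor $b_j^{r_j(\lambda)}$ is a high $p$-power and hence determined to the higher modulus. The reverse direction then isolates $(t+1)b_{t+1}$ as the unique new term and divides by $t+1$ at the right precision. Note that this whole argument is mechanical once the partition formula and the Kummer bound are in hand; there is no Dwork congruence anywhere.

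Your proposal instead reaches for the congruences $\tr{A^{p^r m}} \equiv \tr{A^{p^{r-1} m}} \pmod{p^r}$ and asserts that ``a careful accounting then shows that, after summing across $j$, the combined error in the Newton formula is divisible by $p^{v_p(k)+l}$.'' This is precisely the step where the proof is, and it is not supplied. The Dwork congruences constrain the power sums $p_m$ relative to one another, but they do not by themselves control the precision of the products $e_j\, p_{k-j}$ appearing in Newton's identity — indeed in the smallest nontrivial case ($p=2$, $l=1$, $k=2$) the identity $p_2 = e_1^2 - 2e_2$ resolves because $e_1^2 \pmod 4$ is a function of $e_1 \pmod 2$ and because of the explicit factor $2$, not because of any Dwork congruence. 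The exponential formula $\sum e_i t^i = \exp\!\bigl(\sum (-1)^{j-1}p_j t^j / j\bigr)$ that you mention at the end is indeed the generating-function avatar of the paper's partition formula, so you are circling the right object; but the ingredient that makes the precision bookkeeping work is the multinomial-coefficient valuation estimate, which is absent from your sketch. Without it, the ``nested induction on $v_p(k)$'' has nothing to run on.
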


As indicated above, Theorem~\ref{thm:main_theorem_6} follows as a direct consequence of Theorems~\ref{thm:wang_series} and~\ref{thm:deep_power_sum}, applied with the prime power $2^2$. In what follows, we present a proof of Theorem~\ref{thm:deep_power_sum} as a consequence of some other results of~\cite{anni2024elementary}, which will also be needed in Section~\ref{sec:reconstruction_mod4}.

In what follows in this section, we write $\phi^G(x)=x^n-b_1x^{n-1}+b_2x^{n-2}-b_3x^{n-3}+\cdots +(-1)^n b_n$. Let $\theta_1, \dots, \theta_n$ be the $n$ zeros of $\phi^G(x)$, or equivalently, the eigenvalues of the adjacency matrix $A$. Note that if we define $b_k \defeq 0$ for $k > n$, then for every $m \geq 1$, $b_m$ corresponds to the $m$-th elementary symmetric polynomial in $\theta_1, \dots, \theta_n$. On the other hand, $\tr{A^m}$ is the $m$-th power sum symmetric polynomial in $\theta_1, \dots, \theta_n$ for every $m \geq 1$. This correspondence enables us to translate the results from~\cite{anni2024elementary} into our context.

To state the next result, let $\lambda$ be a partition of an integer $m > 0$, denoted $\lambda \vdash m$. A partition $\lambda$ of $m$ is a finite ordered tuple $(\lambda_1, \lambda_2, \dots, \lambda_{k(\lambda)})$ such that $\lambda_1 \geq \lambda_2 \geq \cdots \geq \lambda_{k(\lambda)} \geq 1$ and $\sum_{j=1}^{k(\lambda)} \lambda_j = m$. Here, $k(\lambda)$ denotes the number of parts of the partition $\lambda$. We define $r_j(\lambda)$ as the number of parts of size $j$ in the partition $\lambda$, that is, $r_j(\lambda) = |\{l \in [k(\lambda)] \mid \lambda_l = j\}|$. Note that $\sum_{j \geq 1} r_j(\lambda) = k(\lambda)$, and this sum is finite.

\begin{theorem}[\cite{anni2024elementary}, Cor. 22]\label{thm:deep_power_sum_identity}  
For every $m \geq 1$,  
\[
\tr{A^m} = (-1)^m m \sum_{\lambda \vdash m} \frac{(-1)^{k(\lambda)}}{k(\lambda)} \binom{k(\lambda)}{r_1(\lambda), r_2(\lambda), \dots} \prod_{j \geq 1} b_j^{r_j(\lambda)}.
\]
\end{theorem}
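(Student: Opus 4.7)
The plan is to derive the identity from the classical generating-function interpretation of the Newton--Girard relations between power sums and elementary symmetric functions. Since $b_j$ is the $j$-th elementary symmetric polynomial in the eigenvalues $\theta_1,\dots,\theta_n$ of $A$, the first step is to introduce the formal power series
\[
E(t) \defeq \prod_{i=1}^n (1+\theta_i t) = 1 + b_1 t + b_2 t^2 + \cdots + b_n t^n,
\]
which reduces the task to extracting the coefficient of $t^m$ from $\log E(t)$ in two different ways.

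On one hand, applying $\log(1+\theta_i t) = \sum_{k\geq 1}(-1)^{k-1}\theta_i^k t^k/k$ termwise and summing over $i$ would give
\[
\log E(t) = \sum_{k\geq 1}\frac{(-1)^{k-1}}{k}\tr{A^k}\, t^k,
\]
so the coefficient of $t^m$ equals $(-1)^{m-1}\tr{A^m}/m$. On the other hand, setting $F(t)\defeq E(t)-1=\sum_{j\geq 1} b_j t^j$ and using the formal identity $\log(1+F)=\sum_{k\geq 1}(-1)^{k-1}F^k/k$, I would expand each $F(t)^k$ via the multinomial theorem:
\[
F(t)^k = \sum_{\substack{r_1,r_2,\dots\geq 0\\ r_1+r_2+\cdots=k}}\binom{k}{r_1,r_2,\dots}\prod_{j\geq 1} b_j^{r_j}\, t^{\sum_j j r_j}.
\]
The tuples $(r_1,r_2,\dots)$ that contribute to $t^m$ are precisely those with $\sum_j r_j=k$ and $\sum_j j r_j=m$, and these are in bijection with the partitions $\lambda\vdash m$ having $k(\lambda)=k$ parts via $r_j=r_j(\lambda)$.

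Summing over $k\geq 1$ and equating the two expressions for the coefficient of $t^m$ would yield
\[
\frac{(-1)^{m-1}}{m}\tr{A^m} = \sum_{\lambda\vdash m}\frac{(-1)^{k(\lambda)-1}}{k(\lambda)}\binom{k(\lambda)}{r_1(\lambda),r_2(\lambda),\dots}\prod_{j\geq 1} b_j^{r_j(\lambda)}.
\]
Multiplying through by $(-1)^{m-1}m$ and simplifying the signs using the identity $(-1)^{m-1}(-1)^{k(\lambda)-1}=(-1)^m(-1)^{k(\lambda)}$ then recovers the claimed formula. I do not anticipate any serious obstacle: the entire derivation lives inside a formal power series ring over the rationals, and the only bookkeeping step of substance is the translation between multinomial indexing and the partition language used in the statement.
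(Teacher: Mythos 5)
Your argument is correct, and it is the standard derivation of the Girard--Waring formula expressing power sums in terms of elementary symmetric polynomials: compare the coefficient of $t^m$ in $\log E(t)$ obtained once by expanding $\sum_i \log(1+\theta_i t)$ and once by expanding $\log(1+F(t))$ with the multinomial theorem, where $F(t)=E(t)-1=\sum_{j\geq 1}b_jt^j$. The bookkeeping checks out: a tuple $(r_1,r_2,\dots)$ with $\sum_j r_j=k$ and $\sum_j jr_j=m$ is the same data as a partition $\lambda\vdash m$ with $k(\lambda)=k$ and multiplicities $r_j(\lambda)=r_j$, and the signs combine as $(-1)^{m-1}(-1)^{k-1}=(-1)^{m+k}=(-1)^m(-1)^k$, matching the statement. (Sanity checks: $m=1$ gives $\tr{A}=b_1$; $m=2$ gives $\tr{A^2}=b_1^2-2b_2$.) Note, however, that the paper does not prove this result itself; it is cited as Corollary 22 of Anni, Ghitza, and Medvedovsky~\cite{anni2024elementary}. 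Your derivation is a self-contained proof of that cited identity and would be a fine replacement for the citation, though it does not mirror any argument actually written in the paper under review.
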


We also need the following results.

\begin{proposition}[\cite{anni2024elementary}, Cor. 24]\label{prop:kummer} Let $p$ be a prime number, and let $\lambda \vdash m$. Then, for every $j \geq 1$ with $r_j(\lambda)\geq 1$,  
\[
v_p\left(\binom{k(\lambda)}{r_1(\lambda), r_2(\lambda), \dots}\right) \geq v_p(k(\lambda)) - v_p(r_j(\lambda)).
\]
\end{proposition}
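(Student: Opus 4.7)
The plan is to reduce the estimate for the multinomial coefficient to the corresponding estimate for a single binomial coefficient, and then invoke the elementary identity $r\binom{k}{r}=k\binom{k-1}{r-1}$.

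I would begin by writing the multinomial coefficient as a product in which $\binom{k(\lambda)}{r_j(\lambda)}$ is factored off:
\[
\binom{k(\lambda)}{r_1(\lambda), r_2(\lambda), \dots} = \binom{k(\lambda)}{r_j(\lambda)} \cdot \binom{k(\lambda) - r_j(\lambda)}{r_1(\lambda), \dots, \widehat{r_j(\lambda)}, \dots},
\]
where the hat denotes omission of the $j$-th entry; this equality is immediate from the factorial expression of the multinomial coefficient. The right-hand factor is a nonnegative integer, hence has nonnegative $p$-adic valuation, so it suffices to establish
\[
v_p\!\left(\binom{k(\lambda)}{r_j(\lambda)}\right) \geq v_p(k(\lambda)) - v_p(r_j(\lambda)).
\]

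Writing $k = k(\lambda)$ and $r = r_j(\lambda)$ for brevity, I would then invoke the classical identity $r\binom{k}{r} = k\binom{k-1}{r-1}$, which makes sense since $r \geq 1$ and $k = \sum_i r_i(\lambda) \geq r \geq 1$. Taking $p$-adic valuations on both sides yields
\[
v_p(r) + v_p\!\left(\binom{k}{r}\right) = v_p(k) + v_p\!\left(\binom{k-1}{r-1}\right) \geq v_p(k),
\]
since $\binom{k-1}{r-1}$ is a nonnegative integer. Rearranging gives the required bound, and combined with the previous display the proposition follows.

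There is no real obstacle here: once one notices that the stated bound involves only one of the parts $r_j(\lambda)$, factoring off the corresponding binomial coefficient is the natural move, and the rest is a one-line manipulation. An alternative route would invoke Kummer's theorem on the number of carries when adding $r$ and $k-r$ in base $p$, which gives the same estimate more combinatorially, but the identity-based argument is shorter and self-contained.
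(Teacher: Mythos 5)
The paper does not prove this proposition; it is imported verbatim from \cite{anni2024elementary} (Cor.\ 24) without an argument, so there is no in-text proof to compare against. Your proof is correct and self-contained: the factorization
\[
\binom{k(\lambda)}{r_1(\lambda), r_2(\lambda), \dots} = \binom{k(\lambda)}{r_j(\lambda)} \cdot \binom{k(\lambda) - r_j(\lambda)}{r_1(\lambda), \dots, \widehat{r_j(\lambda)}, \dots}
\]
is valid (direct from factorials), the second factor is a nonnegative integer so has nonnegative valuation, and the absorption identity $r\binom{k}{r}=k\binom{k-1}{r-1}$ — legitimate since the hypothesis $r_j(\lambda)\geq 1$ guarantees $1 \leq r \leq k$ — yields the bound on the binomial factor after taking $v_p$. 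Nothing is missing, and the argument is both shorter and more transparent than appealing to Kummer's carry-counting theorem directly, which you correctly note as an alternative.
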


The next result is well known, and part of its statement can be found in Lemma 25 of~\cite{anni2024elementary}.

\begin{lemma}\label{lem:prime_power} Let $p$ be a prime number. Then, both $x^{p^m} \pmod{p^{m+l}}$ and $p^m x \pmod{p^{m+l}}$ depend only on $x \pmod{p^l}$.
\end{lemma}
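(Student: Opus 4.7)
The plan is to prove the two assertions separately, with the second (and harder) one handled by induction on $m$.

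For the statement about $p^m x$, I would argue directly: if $x \equiv y \pmod{p^l}$, then $x - y = p^l r$ for some integer $r$, so $p^m x - p^m y = p^{m+l} r$, which is divisible by $p^{m+l}$. Hence $p^m x \equiv p^m y \pmod{p^{m+l}}$.

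For the statement about $x^{p^m}$, I would prove by induction on $m$ the claim that $x \equiv y \pmod{p^l}$ implies $x^{p^m} \equiv y^{p^m} \pmod{p^{m+l}}$. The base case $m = 0$ is the hypothesis itself. For the inductive step, assume the claim holds for $m$ and write $x^{p^m} = y^{p^m} + p^{m+l} k$ for some integer $k$. Then
\[
x^{p^{m+1}} = \bigl(y^{p^m} + p^{m+l} k\bigr)^p = y^{p^{m+1}} + \sum_{i=1}^{p} \binom{p}{i} y^{p^m(p-i)} (p^{m+l} k)^i.
\]
The main obstacle, and the only thing to check, is that each term with $1 \le i \le p$ is divisible by $p^{m+l+1}$. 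For $1 \le i \le p-1$, the binomial coefficient $\binom{p}{i}$ is divisible by $p$, giving $p$-adic valuation at least $1 + i(m+l)$, and since $i \ge 1$ and $m + l \ge 1$ (the case $l = 0$ being trivial as noted above), we have $1 + i(m+l) \ge m + l + 1$. For $i = p$, the contribution has $p$-adic valuation at least $p(m+l)$, and $(p-1)(m+l) \ge 1$ again because $p \ge 2$ and $m + l \ge 1$. Summing, every term in the sum is divisible by $p^{m+l+1}$, completing the induction.

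Having shown $x \equiv y \pmod{p^l}$ implies $x^{p^m} \equiv y^{p^m} \pmod{p^{m+l}}$, the assertion that $x^{p^m} \pmod{p^{m+l}}$ depends only on $x \pmod{p^l}$ follows at once. The key technical point is the standard fact $v_p(\binom{p}{i}) \ge 1$ for $1 \le i \le p - 1$ combined with the inequality $(p-1)(m+l) \ge 1$, and the argument is essentially a structured bookkeeping of $p$-adic valuations in the binomial expansion.
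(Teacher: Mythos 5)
The paper gives no proof of this lemma; it just records it as ``well known'' and cites Lemma~25 of Anni--Ghitza--Medvedovsky. Your proof is the standard one (induction on $m$, binomial expansion, $p$-adic valuation of binomial coefficients) and is essentially correct as a supply of the omitted argument.

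Two small points worth tightening. First, for $1 \le i \le p-1$ the inequality you actually need is $1 + i(m+l) \ge m+l+1$, i.e.\ $(i-1)(m+l) \ge 0$; this holds whenever $i \ge 1$ and $m+l \ge 0$, so invoking ``$m+l \ge 1$'' there is unnecessary. Second, the parenthetical ``the case $l=0$ being trivial as noted above'' is both unreferenced (nothing above notes it) and slightly inaccurate: for $l=0$ and $m\ge 1$ the claim about $x^{p^m}$ is actually false (e.g.\ $0 \not\equiv 1 \pmod{p^m}$ while $0 \equiv 1 \pmod 1$), so the right observation is that the only case where $(p-1)(m+l)\ge 1$ fails is $m=l=0$, where the modulus is $1$ and the statement is vacuous, and that in the paper's usage $p^l$ is always a genuine prime power with $l\ge 1$. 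Neither point affects the substance; the argument goes through.
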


\begin{corollary}\label{cor:prime_power_we_know} Let $p$ be a prime number, and let $\lambda \vdash m$. If we know $b_j\pmod{p^l}$ for every $j\leq m$, then we can determine
\[
\frac{m}{k(\lambda)} \binom{k(\lambda)}{r_1(\lambda), r_2(\lambda), \dots} \prod_{j \geq 1} b_j^{r_j(\lambda)}\pmod{p^{v_p(m)+l}}.
\]
\end{corollary}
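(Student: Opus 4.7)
The plan is to use Proposition~\ref{prop:kummer} to bound the $p$-adic valuation of the rational scalar $C \defeq \frac{m}{k(\lambda)} \binom{k(\lambda)}{r_1(\lambda), r_2(\lambda), \dots}$, and Lemma~\ref{lem:prime_power} to control how well the product $\prod_j b_j^{r_j(\lambda)}$ is determined by the residues $b_j \pmod{p^l}$. Both bounds turn out to be governed by the same quantity $v \defeq \min\{v_p(r_j(\lambda)) : r_j(\lambda) \geq 1\}$, and the two contributions to the overall precision cancel exactly, leaving precisely the advertised $v_p(m) + l$.

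I would start by choosing an index $j^*$ with $r_{j^*}(\lambda) \geq 1$ realizing this minimum, so that $v = v_p(r_{j^*}(\lambda))$. Applying Proposition~\ref{prop:kummer} at $j = j^*$ then yields $v_p(C) \geq v_p(m) - v$. On the other side, for each $j$ with $r_j(\lambda) \geq 1$, I would write $r_j(\lambda) = p^{v_p(r_j(\lambda))} u_j$ with $\gcd(u_j, p) = 1$ and rewrite $b_j^{r_j(\lambda)} = (b_j^{u_j})^{p^{v_p(r_j(\lambda))}}$. Lemma~\ref{lem:prime_power}, applied to the outer $p^{v_p(r_j(\lambda))}$-th power, shows that $b_j^{r_j(\lambda)} \pmod{p^{l + v_p(r_j(\lambda))}}$, and hence a fortiori $\pmod{p^{l+v}}$, is determined by $b_j \pmod{p^l}$. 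A standard telescoping identity for products of $p$-adic integers upgrades this factorwise statement to the product itself: $\prod_j b_j^{r_j(\lambda)} \pmod{p^{l+v}}$ is determined by the residues $\{b_j \pmod{p^l}\}_{j \leq m}$.

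Combining the two estimates, multiplication of a scalar of $p$-adic valuation at least $v_p(m) - v$ by a quantity well-defined modulo $p^{l+v}$ produces a quantity well-defined modulo $p^{(v_p(m) - v) + (l + v)} = p^{v_p(m) + l}$, which is the desired conclusion. The only substantive point is the careful choice of $j^*$ so that the same quantity $v$ appears on both sides and the losses cancel; everything else reduces to elementary manipulation of $p$-adic valuations, so no real obstacle is expected.
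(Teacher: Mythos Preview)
Your proposal is correct and follows essentially the same approach as the paper's proof: both define the minimum $v = \min\{v_p(r_j(\lambda)) : r_j(\lambda) \geq 1\}$ (the paper calls it $v_p(\lambda)$), use Proposition~\ref{prop:kummer} together with $v_p(m/k(\lambda))$ to get $v_p(C) \geq v_p(m) - v$, use Lemma~\ref{lem:prime_power} to show the product $\prod_j b_j^{r_j(\lambda)}$ is determined modulo $p^{l+v}$, and then combine. Your write-up is in fact slightly more explicit than the paper's about why Lemma~\ref{lem:prime_power} controls each factor $b_j^{r_j(\lambda)}$ (via the decomposition $r_j = p^{v_p(r_j)} u_j$), whereas the paper states this step in one line.
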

\begin{proof} Let $v_p(\lambda)$ be defined as the minimum of $\{v_p(r_j(\lambda)) \mid j\geq 1, r_j(\lambda)\geq 1\}$. By Lemma~\ref{lem:prime_power}, as we know $b_j\pmod{p^l}$ for every $j\leq m$, we can determine $\prod_{j \geq 1} b_j^{r_j(\lambda)}\pmod{p^{v_p(\lambda)+l}}$. By Proposition~\ref{prop:kummer}, we have
\[
v_p\left(\frac{m}{k(\lambda)}\binom{k(\lambda)}{r_1(\lambda), r_2(\lambda), \dots}\right) \geq v_p(m) - v_p(r_j(\lambda)),
\]
for every $j \geq 1$ with $r_j(\lambda) \geq 1$. Therefore, the same inequality holds with the right-hand side as $v_p(m) - v_p(\lambda)$. The result then follows from Lemma~\ref{lem:prime_power}.
\end{proof}

Using Theorem~\ref{thm:deep_power_sum_identity} and Corollary~\ref{cor:prime_power_we_know}, we can provide a more concise proof of Theorem~\ref{thm:deep_power_sum} in our context, compared to the approach in~\cite{anni2024elementary}.

\begin{proof}[Proof of Theorem~\ref{thm:deep_power_sum}]
Assume that we know the top $k + 1$ coefficients $1$, $-b_1$, $\dots$, $(-1)^k b_k$ of $\phi^G\pmod{p^l}$. By a direct application of Theorem~\ref{thm:deep_power_sum_identity} and Corollary~\ref{cor:prime_power_we_know}, we can compute $\tr{A^m} \pmod{p^{v_p(m) + l}}$ for every $m$ in $\{0,1,\dots,k\}$. This proves one direction of the statement.

Now, assume that we know $\tr{A^m} \pmod{p^{v_p(m)+l}}$ for all $m$ in $\{0,1,\dots,k\}$. By Theorem~\ref{thm:deep_power_sum_identity}, we have $b_1 = \tr{A}$, so $b_1 \pmod{p^l}$ is known. Now, assume by induction that we have determined $b_1, \dots, b_t \pmod{p^l}$ for some $1 \leq t < k$. By Theorem~\ref{thm:deep_power_sum_identity} with $m = t + 1$, we get an expression for $\tr{A^{t+1}}$ involving $b_1, \dots, b_{t+1}$. Note that the only term in this expression involving $b_{t+1}$ is $(-1)^{t+1}(t+1)b_{t+1}$. As we know $b_1, \dots, b_t \pmod{p^l}$, we can, by Corollary~\ref{cor:prime_power_we_know}, determine all the other terms in this expression modulo $p^{v_p(t+1)+l}$. As $\tr{A^{t+1}} \pmod{p^{v_p(t+1)+l}}$ is known by assumption, we can therefore compute $(t+1)b_{t+1} \pmod{p^{v_p(t+1)+l}}$, which allows us to determine $b_{t+1} \pmod{p^l}$. This completes the proof of the statement.
\end{proof}

It would be interesting to explore whether a more direct proof of Theorem~\ref{thm:main_theorem_6} exists, possibly using Sachs' Coefficients Theorem~\cite[Theorem 3.7]{wang2017simple}.

%%%%%%%%%%%%%%%%%%%%%%%%%%%%%%%%%%%%%%%%%%%%%%%%%%%%%%%%%%%%%%%%%%%%%%%%%%%%%%%%

\section{Reconstruction modulo 4}\label{sec:reconstruction_mod4}

In this section, we prove Theorems~\ref{thm:main_theorem_4} and~\ref{thm:main_theorem_5}. Before we proceed with the proofs we gather some other preliminary results that we will need. 

It is a result by Wang~\cite{wang2017simple} that the rank of the walk matrix $W^G$ over the finite field $\Fds_2$ is at most $\lceil \frac{n}{2}\rceil$. We will need the following lemma by Wang~\cite{wang2017simple} (or see Lemma 2.10 in~\cite{qiu2023smith}) which provides a more refined version of this result, presenting an explicit linear relation between the first columns of $W^G$ in terms of the coefficients of $\phi^G$. In what follows in this section, we write $\phi^G(x) = x^n - b_1 x^{n-1} + b_2 x^{n-2} - b_3 x^{n-3} + \cdots + (-1)^n b_n$.

\begin{lemma}[Lemma 3.9 in~\cite{wang2017simple}]\label{lem:wang_walk_matrix2} For every graph $G$,
\[
A^{\frac{n}{2}}\mathbf{1} + b_2 A^{\frac{n-2}{2}}\mathbf{1} + \cdots +b_{n-2} A\mathbf{1} +b_n\mathbf{1}\equiv 0\pmod{2},
\]
\noindent if $n$ is even, and,
\[
A^{\frac{n+1}{2}}\mathbf{1} + b_2 A^{\frac{n-1}{2}}\mathbf{1} + \cdots +b_{n-3} A^2\mathbf{1} +b_{n-1}A\mathbf{1}\equiv 0\pmod{2},
\]
\noindent if $n$ is odd.
\end{lemma}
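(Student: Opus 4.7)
The plan is to identify a ``square root'' of $\phi^G$ modulo $2$ and to deduce the claimed identity from Cayley--Hamilton via the Frobenius endomorphism. First I would show that every odd-indexed coefficient $b_{2k+1}$ of $\phi^G$ is even. By Harary's coefficient theorem, $b_k$ is a signed sum over elementary subgraphs $H$ of $G$ on $k$ vertices of terms of the form $(-1)^{k+c(H)}2^{r(H)}$, where $c(H)$ counts components and $r(H)$ counts cycle components. Every component of such an $H$ is either an edge ($2$ vertices) or a cycle ($\geq 3$ vertices), so any $H$ on an odd number of vertices must contain at least one cycle; hence $r(H)\geq 1$ and every contribution is even.

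Consequently, $\phi^G(x)\equiv\sum_{k=0}^{\lfloor n/2\rfloor}b_{2k}\,x^{n-2k}\pmod{2}$. When $n$ is even, this equals $P(x^2)$ for $P(y)\defeq y^{n/2}+b_2\,y^{n/2-1}+\dots+b_{n-2}\,y+b_n$, which is exactly the polynomial in the lemma; applying the Frobenius endomorphism on $\Fds_2[x]$ yields $P(x)^{2}\equiv P(x^2)\pmod{2}$, so $\phi^G(x)\equiv P(x)^{2}\pmod{2}$. The analogous computation for $n$ odd gives $\phi^G(x)\equiv x\cdot Q(x)^{2}\pmod{2}$, with $Q$ the corresponding polynomial in the odd-case statement. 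Applying Cayley--Hamilton to the vector $\mathbf{1}$ and reducing modulo $2$ then produces $P(A)^{2}\,\mathbf{1}\equiv 0\pmod{2}$ in the even case, and $A\cdot Q(A)^{2}\,\mathbf{1}\equiv 0\pmod{2}$ in the odd case.

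The crux is to upgrade this ``squared'' annihilation to the unsquared identity $P(A)\,\mathbf{1}\equiv 0\pmod{2}$ (and the analogous $A\,Q(A)\,\mathbf{1}\equiv 0\pmod 2$). Using the symmetry of $A$, for every $j\geq 0$,
\[
\bigl(A^{j}P(A)\,\mathbf{1}\bigr)^{T}\bigl(A^{j}P(A)\,\mathbf{1}\bigr)=\mathbf{1}^{T}A^{2j}P(A)^{2}\,\mathbf{1}\equiv 0\pmod{2},
\]
and over $\Fds_2$ the self-inner-product of a vector equals the sum of its entries, so $\mathbf{1}^{T}A^{j}P(A)\,\mathbf{1}\equiv 0\pmod{2}$ for every $j\geq 0$. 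Thus $P(A)\,\mathbf{1}$ lies in the standard-form orthogonal complement of the cyclic $A$-subspace $V\defeq\operatorname{span}_{\Fds_2}\{\mathbf{1},A\mathbf{1},A^{2}\mathbf{1},\dots\}$, while trivially belonging to $V$. The main obstacle is concluding $P(A)\,\mathbf{1}=0$ from the resulting inclusion $P(A)\,\mathbf{1}\in V\cap V^{\perp}$: this radical can be nonzero in general (for instance when $A=0$ and $n$ is even). Completing this step should require input beyond the mod-$2$ identities above, such as a $2$-adic refinement of Cayley--Hamilton (tracking $\phi^G(A)\,\mathbf{1}\pmod{4}$ together with the $2$-part of $P(A)^{2}\,\mathbf{1}$), a Smith--normal-form analysis of $xI-A$ over $\Zds$, or an inductive argument on the structure of $G$.
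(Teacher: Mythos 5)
Your proposal correctly establishes the ``square'' identity: the observation that odd-indexed coefficients of $\phi^G$ are even (Lemma~\ref{lem:cte_mod2_even}), the Frobenius factorization $\phi^G(x)\equiv P(x)^2$ or $x\,Q(x)^2\pmod 2$, and the application of Cayley--Hamilton to get $P(A)^2\mathbf{1}\equiv 0\pmod 2$ (resp.\ $A\,Q(A)^2\mathbf{1}\equiv 0$) are all sound. The paper itself does not prove this lemma --- it is quoted from Wang~\cite{wang2017simple} --- so there is no in-paper proof to compare against; the proposal must stand on its own.

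It does not. The gap you flag is genuine, and the orthogonality argument cannot close it. You show $P(A)\mathbf{1}\in V\cap V^\perp$ where $V=\operatorname{span}_{\Fds_2}\{A^k\mathbf{1}\}$, but the standard bilinear form is extremely degenerate on $V$: the Gram entries $\mathbf{1}^T A^{i+j}\mathbf{1}$ are even for all $i+j\geq 1$ (this is Lemma~\ref{lem:walks_even} in the paper), so if $n$ is even every entry of the Gram matrix of $V$ vanishes mod~$2$ and $V\subseteq V^\perp$ outright; if $n$ is odd the radical is $V\cap\mathbf{1}^\perp$, of codimension one in $V$. Either way ``lies in the radical'' carries essentially no information beyond what you already had. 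Concretely, $P(A)^2\mathbf{1}=0$ over $\Fds_2$ only says $\mathbf{1}\in\ker P(A)^2$, and for a non-semisimple symmetric matrix over $\Fds_2$ the inclusion $\ker P(A)\subsetneq\ker P(A)^2$ is common, so there is no purely mod-$2$ reason for $\mathbf{1}$ to lie in the smaller kernel. Your own closing paragraph is the honest assessment: finishing requires an input beyond the mod-$2$ identities --- Wang's proof works with finer arithmetic (integral/$\Zds_4$ structure of $xI-A$) precisely because the $\Fds_2$ form on the cyclic module is degenerate. As written, the proof establishes $P(A)^2\mathbf{1}\equiv 0\pmod 2$ but not the claimed $P(A)\mathbf{1}\equiv 0\pmod 2$, and the step between them is missing, not merely unpolished.
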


Some additional results we need are the following propositions, which are the analogues of Proposition~\ref{prop:gen_basics} modulo $4$.

\begin{proposition}\label{prop:gen_basics_mod4} Consider $S(x) = \sum_{k \geq 0} \frac{s_k}{x^{k+1}}$ and $T(x) = \sum_{k \geq 0} \frac{t_k}{x^{k+1}}$, where $s_k, t_k \in \Zds_4$ for all $k \geq 0$. If we know the first $m$ coefficients of $S$ and $T$, we can  compute the first $m$ and $m+1$ coefficients of $S \pm T$ and $S \cdot T$, respectively. Furthermore, if we know the first $m+1$ and $m$ coefficients of $S \cdot T$ and $S$, respectively, and if the first coefficient of $S$ is invertible in $\Zds_4$, then we can  compute the first $m$ coefficients of $T$.
\end{proposition}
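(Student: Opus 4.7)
The plan is to mirror the proof of Proposition~\ref{prop:gen_basics} step by step, the only genuine change being that division in $\Cds$ by a nonzero element is replaced by multiplication in $\Zds_4$ by the inverse of a unit. All three claims rest on the observation that addition and multiplication of formal series whose coefficients live in a commutative ring are governed by the same explicit convolution formulas as in the complex case, so the arithmetic identities transfer verbatim once one checks that the divisions that appear are by units.

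For the $S \pm T$ part I would simply note that the $k$-th coefficient is $s_k \pm t_k$, so the first $m$ coefficients of each series yield the first $m$ of $S \pm T$. For the product, writing $S(x) T(x) = \sum_{k \geq 0} u_k/x^{k+1}$, expansion gives $u_0 = 0$ and $u_k = \sum_{i=0}^{k-1} s_i t_{k-1-i}$ for $k \geq 1$. Hence $u_k$ uses only $s_0, \dots, s_{k-1}$ and $t_0, \dots, t_{k-1}$, which are available for $k \leq m$, providing the first $m+1$ coefficients $u_0, \dots, u_m$ of $S \cdot T$.

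The inversion step is the only one with any content, and it is where I would focus. Rearranging the convolution identity gives
\[
s_0 \, t_{l-1} \;=\; u_l - \sum_{i=1}^{l-1} s_i \, t_{l-1-i}, \qquad l = 1, \dots, m,
\]
and I would recover $t_{l-1}$ by induction on $l$: at step $l$, the right-hand side is determined by the hypotheses ($u_l$ and $s_0,\dots,s_{l-1}$ are known) together with the previously recovered $t_0, \dots, t_{l-2}$, and since $s_0$ is invertible in $\Zds_4$, multiplying by $s_0^{-1}$ yields $t_{l-1}$. Iterating for $l = 1, \dots, m$ produces the first $m$ coefficients of $T$.

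The only subtle point compared with Proposition~\ref{prop:gen_basics} — and essentially the sole reason the proof must be restated rather than cited — is that $\Zds_4$ is not a field, so the hypothesis "$s_0 \neq 0$" from the complex setting has to be strengthened to "$s_0$ is invertible in $\Zds_4$" (equivalently, $s_0$ is odd). This is exactly what the proposition assumes, so the induction carries through without further difficulty and no genuine obstacle appears.
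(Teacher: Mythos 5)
Your proposal is correct and matches the paper's proof essentially step for step: the addition and multiplication parts are the same convolution observations, and the inversion step is the same rearrangement $s_0 t_{l-1} = u_l - \sum_{i=1}^{l-1} s_i t_{l-1-i}$ followed by induction and multiplication by $s_0^{-1}$. The only cosmetic difference is that the paper dispatches the first two claims by saying they are analogous to Proposition~\ref{prop:gen_basics}, while you write them out; the substantive content is identical.
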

\begin{proof} We will prove only the last implication, as the others are analogous to those in Proposition~\ref{prop:gen_basics}. Let $(S \cdot T)(x) = \sum_{k \geq 0} \frac{u_k}{x^{k+1}}$, where $u_k$ is in $\Zds_4$ for all $k \geq 0$. Notice that $u_0 \equiv 0 \pmod{4}$ and $u_k \equiv \sum_{i=0}^{k-1} s_i t_{k-1-i}$ for $k \geq 1$, which can be rewritten as $s_0 t_{k-1} \equiv u_k - \sum_{i=1}^{k-1} s_i t_{k-1-i}$. Therefore, if we know $u_k$, $s_0, \dots, s_{k-1}$, and $t_0, \dots, t_{k-2}$, then, since $s_0$ is invertible in $\Zds_4$, we can solve for $t_{k-1}$. By iterating this procedure, we can recursively compute the first $m$ coefficients of $T$.
\end{proof}

\begin{proposition}\label{prop:gen_basics_mod4_2} Consider $S(x) = \sum_{k \geq 0} \frac{s_k}{x^{k+1}}$ with $s_k$ in $\Zds$ for every $k \geq 0$. If we know the first $m$ coefficients of $S \pmod{2}$, then we can  compute the first $m+1$ coefficients of $S^2 \pmod{4}$. If we know the first $m+1$ coefficients of $S^2 \pmod{4}$ and that the first coefficient of $S \pmod{2}$ is $1$, then we can  compute the first $m$ coefficients of $S \pmod{2}$.
\end{proposition}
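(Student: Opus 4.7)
The plan is to analyze directly the Cauchy product formula for the coefficients of $S^2$ and exploit the fact that, modulo $4$, the doubled off-diagonal products are determined purely by parities. Write $S(x)^2 = \sum_{k\geq 0}\frac{u_k}{x^{k+1}}$, so that $u_0=0$ and $u_k = \sum_{i=0}^{k-1} s_i s_{k-1-i}$ for $k\geq 1$. Pair each index $i$ with $k-1-i$. The off-diagonal pairs (those with $i\neq k-1-i$) contribute $2 s_i s_{k-1-i}$, and $2 s_i s_{k-1-i}\pmod{4}$ depends only on $s_i\pmod{2}$ and $s_{k-1-i}\pmod{2}$. The diagonal term, which appears only when $k-1=2i$ is even, is $s_i^2$; since $(2a+r)^2\equiv r^2\pmod{4}$ for $r\in\{0,1\}$, this also depends only on $s_i\pmod 2$. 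Therefore $u_k\pmod 4$ is a function of $s_0,\ldots,s_{k-1}\pmod 2$.

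For the forward direction, this observation is immediate: if the first $m$ coefficients of $S\pmod 2$ are known, then $u_0,u_1,\ldots,u_m\pmod 4$ can be read off from the explicit formula above, which gives the first $m+1$ coefficients of $S^2\pmod 4$.

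For the reverse direction, I would proceed by induction on $k$. Assume $s_0\equiv 1\pmod 2$. From $u_1=s_0^2\equiv 1\pmod 4$ no new information is needed. For $k\geq 2$, isolate in $u_k$ the boundary contributions from $i=0$ and $i=k-1$, which together give $2 s_0 s_{k-1}\equiv 2 s_{k-1}\pmod 4$ since $s_0$ is odd. By the inductive hypothesis, $s_0,\ldots,s_{k-2}\pmod 2$ are already known, so by the forward argument the remaining sum $\sum_{i=1}^{k-2} s_i s_{k-1-i}\pmod 4$ is determined. Subtracting this from the known $u_k\pmod 4$ yields $2 s_{k-1}\pmod 4$, and hence $s_{k-1}\pmod 2$.

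The content of the proposition is elementary once the parity-determinism observation is in place, so there is no real obstacle; the one point to handle carefully is the separation of the $i=0$ and $i=k-1$ boundary terms from the diagonal term $s_i^2$ when $k-1$ is even, which must be treated as a single contribution $s_i^2$ rather than double-counted. The hypothesis $s_0\equiv 1\pmod 2$ is exactly what is needed in the inductive step to make $2 s_0 s_{k-1}\equiv 2 s_{k-1}\pmod 4$ and thereby recover $s_{k-1}\pmod 2$ unambiguously.
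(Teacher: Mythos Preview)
Your argument is correct and is essentially the same as the paper's proof: both expand $S^2$ via the Cauchy product, pair $i$ with $k-1-i$ so that off-diagonal contributions come with a factor of $2$ and the (possible) diagonal contribution is a square, use that $2z$ and $z^2$ modulo $4$ depend only on $z\pmod 2$, and then for the converse isolate the $2s_0 s_{k-1}$ term and recover $s_{k-1}\pmod 2$ by induction. The only cosmetic difference is that the paper splits explicitly into the cases $k$ odd and $k$ even, whereas you phrase the same split as ``diagonal versus off-diagonal'' pairing.
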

\begin{proof} Let $S(x)^2=\sum_{k\geq 0}\frac{t_k}{x^{k+1}}$ with $t_k$ in $\Zds$ for every $k\geq 0$. Observe that $t_0=0$ and $t_k = \sum_{i=0}^{k-1}s_i s_{k-1-i}$ for every $k\geq 1$. Note that, $t_k = s_{(k-1)/2}^2+2\sum_{i=0}^{(k-3)/2}s_i s_{k-1-i}$ if $k\geq 1$ is odd, and $t_k = 2\sum_{i=0}^{(k-2)/2}s_i s_{k-1-i}$ if $k\geq 2$ is even. Since $z^2\pmod{4}$ and $2z\pmod{4}$ only depend on $z\pmod{2}$, if we know the first $m$ coefficients of $S \pmod{2}$, we can use the above expressions to compute the first $m+1$ coefficients of $S^2 \pmod{4}$.

Now assume that we know the first $m+1$ coefficients of $S^2 \pmod{4}$. Observe that we can write $2s_0 s_{k-1}=t_k-s_{(k-1)/2}^2-2\sum_{i=1}^{(k-3)/2}s_i s_{k-1-i}$ for $k \geq 1$ odd, and $2s_0 s_{k-1}=t_k-2\sum_{i=1}^{(k-2)/2}s_i s_{k-1-i}$ for $k\geq 2$ even. Thus, if we know $t_k\pmod{4}$ and $s_0, \dots, s_{k-2}\pmod{2}$, we can compute $2s_0 s_{k-1}\pmod{4}$. Since $s_0\equiv 1\pmod{2}$, we can then determine $s_{k-1}\pmod{2}$. By iterating this procedure, we can recursively compute the first $m$ coefficients of $S\pmod{2}$.
\end{proof}

Finally, observe that Proposition~\ref{prop:gen_basics_2} holds without modification if we assume that the coefficients of the polynomials and series are in $\Zds_4$ (or $\Zds_2$).

We are now ready to begin the proof of Theorem~\ref{thm:main_theorem_4}. To simplify the proof, we break it down into several steps. The following result is a direct consequence of Sachs' Coefficients Theorem~\cite[Theorem 3.7]{wang2017simple} and shows that the constant coefficient of $\phi^G$ is always even when $n$ is odd.

\begin{lemma}\label{lem:cte_mod2_even} For every graph $G$, we have $b_1=0$, and $b_k$ is even for every odd $k$. In particular, if $n$ is odd, the constant coefficient of $\phi^G$ is even.
\end{lemma}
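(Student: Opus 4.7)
The plan is to apply the Sachs' Coefficients Theorem (cited in the statement as \cite[Theorem 3.7]{wang2017simple}), which expresses each coefficient $b_k$ as a signed sum over the so-called Sachs subgraphs of $G$ on $k$ vertices. Recall that a Sachs subgraph is a (vertex-) subgraph in which every connected component is either a single edge or a cycle, and the theorem gives
\[
b_k = \sum_{S} (-1)^{c(S)}\, 2^{z(S)},
\]
where the sum runs over all Sachs subgraphs $S$ of $G$ with exactly $k$ vertices, $c(S)$ is the total number of components of $S$, and $z(S)$ is the number of components that are cycles.

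For the claim $b_1 = 0$: every component of a Sachs subgraph has at least $2$ vertices (an edge has $2$, a cycle has $\geq 3$), so there is no Sachs subgraph with only $1$ vertex. The sum defining $b_1$ is therefore empty.

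For the claim that $b_k$ is even whenever $k$ is odd: decompose any Sachs subgraph $S$ with $k$ vertices into its edge components (each contributing $2$ vertices) and its cycle components. The edge components contribute an even number of vertices in total, so if $k$ is odd, the cycle components must contribute an odd (in particular, positive) number of vertices. Hence $z(S) \geq 1$, and each term $(-1)^{c(S)} 2^{z(S)}$ in the Sachs formula is divisible by $2$. This shows $b_k$ is even.

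For the final sentence, when $n$ is odd the constant coefficient of $\phi^G(x)$ is $(-1)^n b_n = -b_n$, and by the previous part $b_n$ is even, so the constant coefficient is even. There is no real obstacle here beyond correctly invoking Sachs' formula; the whole argument is a one-page parity count.
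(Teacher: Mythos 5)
Your proof is correct and follows exactly the route the paper itself indicates: the lemma is stated in the text as ``a direct consequence of Sachs' Coefficients Theorem'' with no written proof, and you have simply spelled out the implicit parity count. (The only cosmetic nit is that your displayed Sachs formula omits an overall $(-1)^k$ relative to the paper's convention $\phi^G(x)=x^n-b_1x^{n-1}+b_2x^{n-2}-\cdots+(-1)^nb_n$, but a global sign has no bearing on the parity argument.)
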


Therefore, we can focus our reconstruction of the constant coefficient of $\phi^G \pmod{2}$ on the case where $n$ is even. We analyze separately the cases $n \equiv 0 \pmod{4}$ and $n \equiv 2 \pmod{4}$. We start with the following result.

\begin{proposition}\label{prop:first_coeficients_mod_4} Let $G$ be a graph with $n\geq 3$ vertices. Then, we can determine $w^{G \setminus i}(x) \pmod{4}$, the first $n$ coefficients of $w^G_{i,i}(x)$ and the first $\lceil\frac{n}{2}\rceil$ coefficients of $w^G(x)\pmod{4}$ and $w^G_i(x)\pmod{2}$ for every $i$ in $[n]$ from the polynomial deck of $G$.
\end{proposition}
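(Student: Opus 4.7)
The plan is to assemble the four required pieces of information in sequence, each step feeding into the next. From the polynomial deck and Theorem~\ref{thm:charac_derivative}, I would first recover $\partial_x \phi^G$, and hence the top $n$ coefficients of $\phi^G$ (everything except the constant term $b_n$).

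With this in hand, Proposition~\ref{prop:gen_basics_2} applied with $p = \phi^{G\setminus i}$ (known in full from the deck) and $q = \phi^G$ (top $n$ coefficients known) yields the first $n$ coefficients of $w^G_{i,i}(x) = \phi^{G\setminus i}/\phi^G$. For $w^{G\setminus i}(x) \pmod{4}$, I would invoke Theorem~\ref{thm:main_theorem_6} on each $G\setminus i$ to obtain $\phi^{\overline{G\setminus i}} \pmod{4}$, and then substitute into the identity~\eqref{eq:all_walks} applied to the $(n-1)$-vertex graph $G\setminus i$.

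The substantive step is the reconstruction of the first $\lceil n/2\rceil$ coefficients of $w^G(x) \pmod{4}$. Note that we cannot apply Theorem~\ref{thm:main_theorem_6} directly to $G$ since $b_n$ is missing; instead, my plan is to apply Theorem~\ref{thm:deep_power_sum} with the prime power $2^2$ to the top $n$ coefficients of $\phi^G \pmod{4}$, which yields $\tr(A^m) \pmod{2^{v_2(m)+2}}$ for every $m \in \{0,1,\dots,n-1\}$. Then, for each $m \in \{1,\dots,\lceil n/2\rceil - 1\}$, I would write $m = 2^t(2s+1)$ and apply Theorem~\ref{thm:wang_series}. Since one checks that $2m \leq n-1$ for every such $m$, all exponents appearing on the right-hand side — namely $2m$ and $2^l(2s+1)$ for $l = 0,\dots,t+1$ — lie within the range where traces are available, and the $2$-adic precision of each trace matches exactly what the formula demands: the denominator $2^l$ in front of $\tr(A^{2^l(2s+1)})$ equals $v_2(2^l(2s+1))$, and similarly $v_2(2m) = t+1$ for the $\tr(A^{2m})/2^{t+1}$ term. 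This delivers $\mathbf{1}^T A^m \mathbf{1} \pmod{4}$, and combined with $w_0 = n$ gives the desired $\lceil n/2\rceil$ coefficients. The careful matching of $2$-adic precisions with the range constraint on exponents is the main bookkeeping obstacle in the argument.

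Finally, for $w^G_i(x) \pmod{2}$, I would exploit the Godsil-McKay identity (Theorem~\ref{thm:godsil_mckay}) rewritten as $w^G_i(x)^2 = (w^G(x) - w^{G\setminus i}(x))\, w^G_{i,i}(x)$: the first $\lceil n/2\rceil$ coefficients of $w^G - w^{G\setminus i} \pmod{4}$ and at least that many coefficients of $w^G_{i,i}$ are now available, so Proposition~\ref{prop:gen_basics_mod4} determines the first $\lceil n/2\rceil + 1$ coefficients of $w^G_i(x)^2 \pmod{4}$. Since the leading coefficient of $w^G_i$ is $e_i^T \mathbf{1} = 1$, which is invertible modulo $2$, Proposition~\ref{prop:gen_basics_mod4_2} then extracts the first $\lceil n/2\rceil$ coefficients of $w^G_i(x) \pmod{2}$, completing the plan.
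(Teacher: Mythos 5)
Your proposal is correct and follows essentially the same route as the paper's proof: recover the top $n$ coefficients of $\phi^G$ via Theorem~\ref{thm:charac_derivative}, get $w^G_{i,i}$ from Proposition~\ref{prop:gen_basics_2}, get the mod-$4$ complement polynomials of the vertex-deleted subgraphs, use Theorem~\ref{thm:wang_series} to produce $\mathbf{1}^T A^m\mathbf{1}\pmod 4$ for $m\le\lceil n/2\rceil-1$, and finally extract $w^G_i\pmod 2$ from the Godsil--McKay identity via Propositions~\ref{prop:gen_basics_mod4} and~\ref{prop:gen_basics_mod4_2}. One small remark: routing through Theorem~\ref{thm:deep_power_sum} to obtain $\tr(A^m)\pmod{2^{v_2(m)+2}}$ is an unnecessary detour --- at that point you already hold $\tr(A^m)$ exactly (as integers) for $m\le n-1$, since $\sum_{i\in[n]}w^G_{i,i}$ is determined by the deck and the top $n$ coefficients of $\phi^G$; the paper simply feeds these exact traces into Theorem~\ref{thm:wang_series}. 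Your $2$-adic precision bookkeeping is nonetheless correct, so the detour does no harm.
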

\begin{proof} Assume that we know the polynomial deck $\{\phi^{G \setminus i}\}_{i \in [n]}$. By Theorem~\ref{thm:wang_series}, we can  compute $\{(\phi^{G \setminus i}, \phi^{\overline{G} \setminus i} \pmod{4})\}_{i \in [n]}$. Therefore, using Equation~\eqref{eq:walk} and Proposition~\ref{prop:gen_basics_2}, we can also compute $w^{G \setminus i}(x) \pmod{4}$ for every $i$ in $[n]$. By Theorem~\ref{thm:charac_derivative}, we can obtain all the coefficients of $\phi^G$ except the constant term, i.e., the top $n$ coefficients. Next, by applying Equation~\eqref{eq:closed_walk} and Proposition~\ref{prop:gen_basics_2}, we can compute the first $n$ coefficients of $w^G_{i,i}(x)$ for every $i$ in $[n]$. As a consequence, we also obtain the first $n$ coefficients of $\sum_{i \in [n]} w^G_{i,i}$, which gives us $\tr{A^k}$ for $k$ in $\{0, \dots, n-1\}$. Thus by using Equation~\eqref{eq:all_walks} and Theorem~\ref{thm:wang_series}, we can compute the first $\lceil \frac{n}{2} \rceil$ coefficients of $w^G(x) \pmod{4}$.

By Theorem~\ref{thm:godsil_mckay}, we have the equation
\begin{equation}\label{eq:godsil_mckay_mod4}
w_i^G(x)^2=(w^G(x)-w^{G\setminus i}(x))w_{i,i}^G(x)\pmod{4},
\end{equation}
\noindent and by Proposition~\ref{prop:gen_basics_mod4}, since we know the first $\lceil\frac{n}{2}\rceil$ coefficients of $w^G(x)\pmod{4}$, all coefficients of $w^{G \setminus i}(x)\pmod{4}$, and the first $n$ coefficients of $w_{i,i}^G(x)$, it follows that we can compute the first $\lceil\frac{n}{2} \rceil + 1$ coefficients of $w_i^G(x)^2 \pmod{4}$. By Proposition~\ref{prop:gen_basics_mod4_2}, as the first coefficient of $w^G_i$ is $1$, we can compute the first $\lceil\frac{n}{2}\rceil$ coefficients of $w_i^G(x)\pmod{2}$.
\end{proof}

The next result is one of the main mechanisms that makes the proof of Theorems~\ref{thm:main_theorem_4} and~\ref{thm:main_theorem_5} work and should be compared to Equation~\eqref{eq:main_mechanism} in the proof of Theorem~\ref{thm:main_theorem_2}.

\begin{lemma}\label{lem:main_mechanism_mod4} If we know $e_i^T A^k \mathbf{1}\pmod{2}$ for every $i$ in $[n]$, then we can compute $\mathbf{1}^T A^{2k}\mathbf{1}\pmod{4}$. 
\end{lemma}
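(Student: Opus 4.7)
The plan is to use the specialization of Equation~\eqref{eq:main_mechanism} with $k = l$, which gives the identity
\[
\mathbf{1}^T A^{2k} \mathbf{1} \;=\; \sum_{i\in[n]} \bigl(e_i^T A^k \mathbf{1}\bigr)^2,
\]
derived by inserting $\sum_i e_i e_i^T = I$ between $A^k$ and $A^k$. This rewrites the quantity we want to know modulo $4$ as a sum of squares of the integers $e_i^T A^k \mathbf{1}$.

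Next I would invoke the elementary fact that for any integer $a$, the value $a^2 \pmod 4$ depends only on $a \pmod 2$. Indeed, writing $a = b + 2m$ yields $a^2 = b^2 + 4bm + 4m^2 \equiv b^2 \pmod{4}$. Hence each summand $(e_i^T A^k \mathbf{1})^2 \pmod{4}$ is determined by $e_i^T A^k \mathbf{1} \pmod{2}$, and summing these known residues modulo $4$ yields $\mathbf{1}^T A^{2k} \mathbf{1} \pmod{4}$, as desired.

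There is no real obstacle here; the statement is a one-line consequence of the bilinear identity in Equation~\eqref{eq:main_mechanism} combined with the fact that squaring is well-defined as a map $\mathbb{Z}/2\mathbb{Z} \to \mathbb{Z}/4\mathbb{Z}$. The conceptual point worth emphasizing is that this is the $\pmod 4$ analogue of the mechanism used in the proof of Theorem~\ref{thm:main_general}: there, knowing the first $s-1$ coefficients of $w^G_i(x)$ for each $i$ yielded the first $2s-3$ coefficients of $w^G(x)$, and here, knowing the $\pmod 2$ entries of the $(k+1)$-st column of $W^G$ suffices to recover the $(2k+1)$-st coefficient of $w^G(x) \pmod 4$, thereby doubling our reach at the cost of working modulo $4$ instead of over $\mathbb{Z}$.
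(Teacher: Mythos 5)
Your proof is correct and follows exactly the paper's argument: insert $\sum_i e_i e_i^T = I$ to write $\mathbf{1}^T A^{2k}\mathbf{1}$ as a sum of squares $\sum_i (e_i^T A^k \mathbf{1})^2$, then use the fact that $a^2 \pmod 4$ depends only on $a \pmod 2$. No differences worth noting.
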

\begin{proof} Note that,
\[
\mathbf{1}^TA^{2k}\mathbf{1}=
\mathbf{1}^TA^k \left(\displaystyle\sum_{i\in [n]}e_i e_i^T\right)A^k\mathbf{1}=
\displaystyle\sum_{i\in [n]} \left(e_i^TA^k\mathbf{1}\right)^2,
\]
\noindent for every $k$ in $\Zds_{\geq 0}$. Thus, since $z^2\pmod{4}$ depends only on $z\pmod{2}$, if we know $e_i^TA^k\mathbf{1} \pmod{2}$ for every $i$ in $[n]$, then we can compute $\mathbf{1}^TA^k\mathbf{1}\pmod{4}$.
\end{proof}

Now, we are ready to obtain the constant coefficient of $\phi^G \pmod{2}$ when $n \equiv 0 \pmod{4}$.

\begin{proposition}\label{prop:obtain_cte_mod2_0mod4} Let $G$ be a graph with $n \geq 4$ vertices, where $n \equiv 0 \pmod{4}$. Then, the constant coefficient of $\phi^G \pmod{2}$ can be computed from the polynomial deck of $G$.
\end{proposition}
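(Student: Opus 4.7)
The plan is to bootstrap Proposition~\ref{prop:first_coeficients_mod_4} by exactly one additional consecutive coefficient of $w^G(x)\pmod{4}$, and then conclude via Wang's Lemma~\ref{lem:wang_walk_matrix2}. The hypothesis $n\equiv 0\pmod{4}$ enters in exactly one place, namely to ensure that $n/2$ is even.

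By Proposition~\ref{prop:first_coeficients_mod_4}, we may assume known: $w^{G\setminus i}(x)\pmod{4}$ in full, the first $n$ coefficients of $w^G_{i,i}(x)$, and the first $n/2$ coefficients of both $w^G(x)\pmod{4}$ and $w^G_i(x)\pmod{2}$, for every $i\in[n]$. The first step applies Lemma~\ref{lem:main_mechanism_mod4}: from the known values $e_i^TA^k\mathbf{1}\pmod{2}$ for $k\le n/2-1$, compute $\mathbf{1}^TA^{2k}\mathbf{1}\pmod{4}$ for each such $k$. The key observation is that, since $n\equiv 0\pmod{4}$, $n/2$ is even, so $\mathbf{1}^TA^{n/2}\mathbf{1}\pmod{4}$ appears among the newly computed coefficients. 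Combined with the $n/2$ already known consecutive coefficients, this yields the first $n/2+1$ consecutive coefficients of $w^G(x)\pmod{4}$.

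The second step uses Equation~\eqref{eq:godsil_mckay_mod4}. With $w^G-w^{G\setminus i}\pmod{4}$ known to its first $n/2+1$ coefficients and $w^G_{i,i}(x)$ to its first $n$, Proposition~\ref{prop:gen_basics_mod4} produces $(w^G_i(x))^2\pmod{4}$ to first $n/2+2$ coefficients, and then Proposition~\ref{prop:gen_basics_mod4_2} extracts the first $n/2+1$ coefficients of $w^G_i(x)\pmod{2}$. In particular, $e_i^TA^{n/2}\mathbf{1}\pmod{2}$ is determined for every $i$, so the entire vector $A^{n/2}\mathbf{1}\pmod{2}$ is known.

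Finally, Wang's Lemma~\ref{lem:wang_walk_matrix2} states that
\[
A^{n/2}\mathbf{1}+b_2\,A^{n/2-1}\mathbf{1}+\cdots+b_{n-2}\,A\mathbf{1}+b_n\mathbf{1}\equiv 0\pmod{2}.
\]
Since $b_2,\dots,b_{n-2}$ are available from the top $n$ coefficients of $\phi^G$ (via Theorem~\ref{thm:charac_derivative}), and the vectors $A^k\mathbf{1}\pmod{2}$ are known for $k\le n/2$ by the previous step, the identity solves directly for $b_n\mathbf{1}\pmod{2}$; reading off any coordinate recovers $b_n\pmod{2}$, which is the constant coefficient of $\phi^G\pmod{2}$. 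The only delicate point, and the reason the argument is restricted to $n\equiv 0\pmod{4}$, is the parity observation in the first step: when $n\equiv 2\pmod{4}$, $n/2$ is odd and Lemma~\ref{lem:main_mechanism_mod4} does not supply $\mathbf{1}^TA^{n/2}\mathbf{1}\pmod{4}$, so the single required additional consecutive coefficient of $w^G(x)\pmod{4}$ is unavailable and a different argument would be needed.
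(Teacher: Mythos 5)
Your proof is correct and is essentially the paper's argument: bootstrap Proposition~\ref{prop:first_coeficients_mod_4} by one more coefficient of $w^G\pmod{4}$ via Lemma~\ref{lem:main_mechanism_mod4} (the paper singles out $k=n/4$, you phrase it as ``$n/2$ is even so it lies in the reachable range,'' which is the same thing for $n\ge 4$), push through Equation~\eqref{eq:godsil_mckay_mod4} and Propositions~\ref{prop:gen_basics_mod4} and~\ref{prop:gen_basics_mod4_2} to get $A^{n/2}\mathbf{1}\pmod{2}$, and then read $b_n\pmod 2$ off Wang's identity in Lemma~\ref{lem:wang_walk_matrix2}.
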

\begin{proof} First note all the information we already have available due to Proposition~\ref{prop:first_coeficients_mod_4}. As we know the first $\frac{n}{2}$ coefficients of $w^G_i\pmod{2}$, in particular, by Equation~\eqref{eq:walk}, we know $e_i^T A^{\frac{n}{4}}\mathbf{1}\pmod{2}$ for every $i$ in $[n]$. Therefore, by Lemma~\ref{lem:main_mechanism_mod4}, we can determine $\mathbf{1}^T A^{\frac{n}{2}}\mathbf{1}\pmod{4}$. By Equation~\eqref{eq:all_walks}, it follows that we know the first $\frac{n}{2}+1$ coefficients of $w^G(x)\pmod{4}$. Therefore, by Equation~\eqref{eq:godsil_mckay_mod4} and Propositions~\ref{prop:gen_basics_mod4} and~\ref{prop:gen_basics_mod4_2}, since we know the first $\frac{n}{2}+1$ coefficients of $w^G(x)\pmod{4}$, all coefficients of $w^{G \setminus i}(x)\pmod{4}$, and the first $n$ coefficients of $w_{i,i}^G(x)$ and $n\geq 4$, we can compute the first $\frac{n}{2} + 1$ coefficients of $w_i^G(x) \pmod{2}$. Thus we know $e_i^T A^k \mathbf{1}\pmod{2}$ for every $i$ in $[n]$ and $k$ in $\{0,\dots, \frac{n}{2}\}$, or in other words, we know $A^k\mathbf{1}\pmod{2}$ for $k$ in this same set. By Lemma~\ref{lem:wang_walk_matrix2}, as $n$ is even and only the constant coefficient of $\phi^G$ is not known, we can then determine the constant coefficient of $\phi^G\pmod{2}$.
\end{proof}

Now, we focus on the case where $n \equiv 2 \pmod{4}$.

\begin{proposition}\label{prop:2mod4_coefficients} Let $G$ be a graph with $n \geq 6$ vertices, where $n \equiv 2 \pmod{4}$. Then, if $b_k$ is odd for some $k$ in $\{2,6,10,\dots, n-4\}$, we can determine $b_n\pmod{2}$.
\end{proposition}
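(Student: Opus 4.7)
The plan is to determine the missing coefficient $\mathbf{1}^T A^{n/2}\mathbf{1}\pmod{4}$; once it is in hand, the argument in the proof of Proposition~\ref{prop:obtain_cte_mod2_0mod4} applies verbatim. Namely, Equation~\eqref{eq:godsil_mckay_mod4} together with Propositions~\ref{prop:gen_basics_mod4} and~\ref{prop:gen_basics_mod4_2} extend every $w^G_i(x)\pmod{2}$ by one more coefficient, thereby recovering $A^{n/2}\mathbf{1}\pmod{2}$, after which Lemma~\ref{lem:wang_walk_matrix2} immediately yields $b_n\pmod{2}$.

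Combining Proposition~\ref{prop:first_coeficients_mod_4} with Lemma~\ref{lem:main_mechanism_mod4} applied at each $k\in\{0,1,\dots,n/2-1\}$ gives access to $\mathbf{1}^T A^s\mathbf{1}\pmod{4}$ for every $s\in\{0,1,\dots,n/2-1\}$ and for every even $s\leq n-2$. Because $n\equiv 2\pmod{4}$ makes $n/2$ odd, the value $\mathbf{1}^T A^{n/2}\mathbf{1}\pmod{4}$ is not furnished by Lemma~\ref{lem:main_mechanism_mod4}, and this is precisely the step at which the hypothesis on $b_k$ is to be used.

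Let $\mathbf{v}=\sum_{j=0}^{n/2} b_{2j}A^{n/2-j}\mathbf{1}$ be Wang's vector. Since $\mathbf{v}\equiv\mathbf{0}\pmod{2}$, writing $\mathbf{v}=2\mathbf{u}$ yields $\mathbf{v}^T A^l\mathbf{v}=4\,\mathbf{u}^T A^l\mathbf{u}\equiv 0\pmod{4}$ for every $l\geq 0$, and expanding gives the mod-$4$ linear relation $\sum_{k,k'}b_{2k}b_{2k'}\,\mathbf{1}^T A^{n+l-k-k'}\mathbf{1}\equiv 0\pmod{4}$ among the quantities $\mathbf{1}^T A^s\mathbf{1}$. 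The diagonal pair $(k,k')=(k_0,k_0)$ contributes $b_{2k_0}^2\equiv 1\pmod{4}$ to the coefficient of $\mathbf{1}^T A^{n+l-2k_0}\mathbf{1}$, which is the key feature of the hypothesis. The plan is to choose $l$ so that $n+l-2k_0=n/2$ (or, when this forces $l<0$ as in the boundary case $n=6,k_0=1$, to combine several such identities for small nonnegative $l$) producing a relation in which $\mathbf{1}^T A^{n/2}\mathbf{1}$ appears with odd coefficient modulo $4$. Substituting the known $\mathbf{1}^T A^s\mathbf{1}\pmod{4}$ and tracking the remaining $b_n$-dependencies via Theorem~\ref{thm:wang_series} and Newton's identities, the goal is to collapse the system to a single congruence of the form $2 b_n Q\equiv R\pmod{4}$ with $Q$ odd, from which $b_n\pmod{2}$ is read off.

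The main obstacle is the careful bookkeeping of the still-unknown mod-$4$ values $\mathbf{1}^T A^s\mathbf{1}$ for odd $s\in\{n/2+2,\dots,n-3\}$ and for $s\in\{n-1,n\}$: these enter the Wang-type relations with coefficients depending on the $b_{2j}$'s and must either be eliminated across several of the relations $\mathbf{v}^T A^l\mathbf{v}\equiv 0\pmod{4}$ or shown to combine into a quantity that is even modulo $4$. Verifying that the hypothesis $b_{2k_0}$ odd for some $k_0\in\{1,3,\dots,(n-4)/2\}$ is precisely the non-degeneracy condition guaranteeing $Q\equiv 1\pmod{2}$ is expected to require a case analysis based on the residue of $n$ modulo $8$ and the particular index $k_0$.
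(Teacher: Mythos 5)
Your proposal takes a genuinely different route from the paper, but it has gaps that are substantial enough that it does not constitute a proof.

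The paper does \emph{not} first determine $\mathbf{1}^T A^{n/2}\mathbf{1}\pmod 4$. Instead, it uses Equation~\eqref{eq:walks_reformulation} together with the known values $\mathbf{1}^T A^{k}\mathbf{1}\pmod 4$ for even $k\in\{\tfrac n2+1,\dots,n-2\}$ to obtain $\tr{A^m}\pmod{2^{v_2(m)+1}}$ for $m\in\{n+2,n+6,\dots,2n-4\}$, and then invokes the explicit Newton--Girard--type identity of Theorem~\ref{thm:deep_power_sum_identity} for $\tr{A^m}$ in terms of $b_1,\dots,b_n$. Because $m\le 2n-4$, the only partitions involving $b_n$ have $r_n(\lambda)=1$, so the $b_n$-contribution to $\tr{A^m}$ is linear in $b_n$ and can be isolated; choosing $m'$ minimal in $S$ with $b_{m'-n}$ odd makes the surviving coefficient of $b_n$ odd. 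Your route instead aims to produce $\mathbf{1}^T A^{n/2}\mathbf{1}\pmod 4$ via $\mathbf v^T A^l\mathbf v\equiv 0\pmod 4$ and then replay Proposition~\ref{prop:obtain_cte_mod2_0mod4}; logically this reverses the order of determination (the paper gets $b_n\pmod 2$ first and $\mathbf{1}^T A^{n/2}\mathbf{1}\pmod 4$ only afterwards, via Theorem~\ref{thm:main_theorem_4}).

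The gap: every relation $\mathbf v^T A^l\mathbf v\equiv 0\pmod 4$ with $l\ge 0$ has $(j,j')=(0,0)$ contributing $b_0^2\,\mathbf{1}^T A^{n+l}\mathbf{1}=\mathbf{1}^T A^{n+l}\mathbf{1}$, a fresh unknown entering with coefficient $1$. Thus each new relation you write down introduces at least one new unknown with odd coefficient, and the system does not visibly close. You defer the elimination of these terms (and of the other unknown odd-exponent values $\mathbf{1}^T A^s\mathbf{1}$ with $s\ge n/2$) to an unspecified ``case analysis based on the residue of $n$ modulo $8$''; that elimination is exactly where the hypothesis on $b_{2k_0}$ has to do its work, and it is the substance of the proposition. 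Moreover, your choice $l=2k_0-\tfrac n2$ only yields $l\ge 0$ when $k_0\ge (n+2)/4$; when $k_0<(n+2)/4$ (in particular for $n=6$, where the hypothesis only allows $k_0=1$) you propose to ``combine several such identities for small nonnegative $l$'' without an argument. Finally, note that $\mathbf{v}^T A^l\mathbf v$ itself depends on the unknown $b_n$ (both through cross terms $2b_nb_{2j'}\mathbf{1}^T A^{l+n/2-j'}\mathbf{1}$ and through $b_n^2\mathbf{1}^T A^l\mathbf{1}$); these happen to reduce mod $4$ to a dependence on $b_n\pmod 2$ only, via Lemma~\ref{lem:walks_even}, but you would need to carry this reduction explicitly and show that the resulting single congruence in $b_n\pmod 2$ is nondegenerate. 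As written, the proposal is a plausible-looking outline with the hard part missing.
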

\begin{proof} First, note that by Theorem~\ref{thm:charac_derivative}, we know all the coefficients $-b_1$, $\dots$, $(-1)^n b_{n-1}$ of $\phi^G$, except for $b_n$. We also have all the information already available from Proposition~\ref{prop:first_coeficients_mod_4}.

Since we know the first $\frac{n}{2}$ coefficients of $w^G_i \pmod{2}$, Equation~\eqref{eq:walk} allows us to determine $e_i^T A^k \mathbf{1}$ for every $i$ in $[n]$ and $k$ in $\left[\frac{n}{2} - 1\right]$. By Lemma~\ref{lem:main_mechanism_mod4}, as $n\geq 6$, we can then compute $\mathbf{1}^T A^k \mathbf{1} \pmod{4}$ for every $k$ in $\{\frac{n}{2} + 1, \frac{n}{2} + 3, \dots, n - 2\} $. By Proposition~\ref{thm:wang_series}, we have for every $k\geq 1$,  
\begin{equation}\label{eq:walks_reformulation}
\mathbf{1}^T A^k \mathbf{1} \equiv \frac{\tr{A^{2k}}}{2^{v_2(k)}} + \sum_{l=0}^{v_2(k)} \frac{\tr{A^{k/2^l}}}{2^{v_2(k)-l}} \pmod{4}.
\end{equation}

Furthermore, by Proposition~\ref{prop:first_coeficients_mod_4} we know the first $n$ coefficients of $\sum_{i \in [n]} w^G_{i,i}$, that is we know $\tr{A^k}$ for $k$ in $\{0,1,\dots, n-1\}$. Therefore, from the equation above, we can determine $\tr{A^m} \pmod{2^{v_2(m)+1}}$ for all $m$ in the set $S\defeq\{n+2,n+6,\dots, 2n-4\}$. By Theorem~\ref{thm:deep_power_sum_identity}, we have a formula for $\tr{A^m}$ for every $m$ in $S$, in terms of $b_1, b_2, \dots, b_n$. Observe that we already know the contributions to this formula involving only $b_1,b_2, \dots, b_{n-1}$. Now, we focus on identifying the contribution involving the coefficient $b_n$.

For a fixed $m$ in $S$, observe that the only terms in the formula for $\tr{A^m}$ that involve $b_n$ correspond to partitions $\lambda \vdash m$ that include a part of size $n$, that is $r_n(\lambda) \geq 1$. Since $m \leq 2n-4$ for every $m$ in $S$, it follows that $r_n(\lambda) = 1$ for these partitions. Thus, the contribution involving the coefficient $b_n$ to the formula for $\tr{A^m}$ given in Theorem~\ref{thm:deep_power_sum_identity} is  
\begin{equation}\label{eq:bn_contribution}
\sum_{\substack{\lambda\vdash m \\ r_n(\lambda)=1}} \frac{(-1)^{k(\lambda)+m} m}{k(\lambda)} \binom{k(\lambda)}{r_1(\lambda), r_2(\lambda), \dots} \prod_{j \geq 1} b_j^{r_j(\lambda)}.
\end{equation}

As we know $\tr{A^m} \pmod{2^{v_2(m)+1}}$ and the contribution to the formula of $\tr{A^m}$ not involving $b_n$, we can determine from the polynomial deck the content of Equation~\eqref{eq:bn_contribution} modulo $2^{v_2(m)+1}$. Also note that as $r_n(\lambda)=1$ and $k-n\leq n-4$ we have in this case
\[
\dfrac{m}{k(\lambda)}\binom{k(\lambda)}{r_1(\lambda), r_2(\lambda), \dots} = m\binom{k(\lambda)-1}{r_1(\lambda), \dots, r_{n-4}(\lambda)}.
\]

Observe that every partition $\lambda \vdash m$ above with $r_n(\lambda)=1$ corresponds to a partition $\lambda'\vdash m-n$ with $k(\lambda')=k(\lambda)-1$ and $r_j(\lambda')=r_j(\lambda)$ for every $j\leq n-4$. From Equation~\eqref{eq:bn_contribution} and the observations above, we conclude that we know
\[
b_n\sum_{\lambda\vdash m-n} (-1)^{k(\lambda)} m \binom{k(\lambda)}{r_1(\lambda), \dots, r_{n-4}(\lambda)} \prod_{j=1}^{n-4} b_j^{r_j(\lambda)} \pmod{2^{v_2(m)+1}},
\]
\noindent for every $m$ in $S$. By Lemma~\ref{lem:cte_mod2_even}, we know that $b_1 = 0$ and that $b_k$ is even for every odd $k$. Therefore, we can further simplify this expression, obtaining for every $m$ in $S$,
\[
b_n\sum_{\substack{\lambda\vdash m-n \\ r_k(\lambda) = 0, \text{ if } 2 \nmid k}} (-1)^{k(\lambda)} \binom{k(\lambda)}{r_2(\lambda), \dots, r_{n-4}(\lambda)} \prod_{\substack{j=2 \\ 2\mid j}}^{n-4} b_j^{r_j(\lambda)} \pmod{2}.
\]

Now, let $m'$ be the smallest element of $S$ such that $b_{m'-n}$ is odd. In this case, we can further simplify the above expression for $m'$. Observe that, since $m' - n \equiv 2 \pmod{4}$, every partition of $m' - n$ must include either an odd part or a part congruent to $2 \pmod{4}$. Since the partitions under consideration do not contain odd parts, it follows that every partition of $m' - n$ in the formula above must include at least one part congruent to $2 \pmod{4}$. However, by assumption, $b_k$ is even for every part $k < m' - n$ that is congruent to $2 \pmod{4}$, so the only nontrivial contribution in the expression above for $m'$ comes from the trivial partition $(m' - n) \vdash m' - n$. As a consequence, this expression simplifies to $-b_n b_{m'-n} \pmod{2}$. Since $b_{m'-n}$ is odd by assumption, it follows that we can determine $b_n \pmod{2}$.
\end{proof}

Before proceeding, we state the following useful observation.

\begin{lemma}[Lemma 2.5~\cite{ji2024new}]\label{lem:walks_even}
For every $k \geq 1$, $\mathbf{1}^T A^k \mathbf{1}$ is even.
\end{lemma}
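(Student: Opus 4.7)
The plan is to give a combinatorial proof by exploiting the reversal involution on walks. The quantity $\mathbf{1}^T A^k \mathbf{1}$ counts the walks $(v_0, v_1, \ldots, v_k)$ of length $k$ in $G$. Reversal $(v_0, \ldots, v_k) \mapsto (v_k, \ldots, v_{k-1}, \ldots, v_0)$ is an involution on this set, so modulo $2$ the count equals the number of fixed points, namely the palindromic walks satisfying $v_i = v_{k-i}$ for every $i \in \{0, 1, \ldots, k\}$.

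Next I would split into cases on the parity of $k$. For odd $k = 2m+1$, a palindromic walk would require $v_m = v_{m+1}$; since $v_m v_{m+1}$ must be an edge of the loopless graph $G$, no such fixed point exists, and hence $\mathbf{1}^T A^k \mathbf{1}$ is even. For even $k = 2m$, palindromic walks of length $k$ are in bijection with walks $(v_0, \ldots, v_m)$ of length $m$: the entries $v_{m+j}$ for $1 \leq j \leq m$ are forced by the palindrome condition to equal $v_{m-j}$, and each required edge $v_{m+j-1}v_{m+j}$ coincides with the edge $v_{m-j+1}v_{m-j}$ of the first half, so no extra constraints arise. Counting over the choice of $(v_0, \ldots, v_m)$ yields exactly $\mathbf{1}^T A^m \mathbf{1}$ palindromes, which gives the congruence
\[
\mathbf{1}^T A^{2m}\mathbf{1} \equiv \mathbf{1}^T A^m \mathbf{1} \pmod 2.
\]

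To conclude, write $k = 2^s k_0$ with $k_0 \geq 1$ odd, which is possible because $k \geq 1$. Iterating the even-case congruence $s$ times gives $\mathbf{1}^T A^k \mathbf{1} \equiv \mathbf{1}^T A^{k_0} \mathbf{1} \pmod 2$, and the odd case then forces this to be even. There is no real obstacle; the only ingredient beyond the reversal involution is the hypothesis that $G$ has no loops, which kills the potential fixed point in the odd-length case.
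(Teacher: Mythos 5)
Your proof is correct. The reversal involution on length-$k$ walks, the bijection between palindromic walks of length $2m$ and walks of length $m$, and the nonexistence of palindromic walks of odd length in a loopless graph are all accurate, and the descent $k = 2^s k_0 \to k_0$ cleanly closes the argument. Note, though, that the paper does not prove this lemma itself; it cites Lemma 2.5 of~\cite{ji2024new}. The proof there (and the shortest route) is algebraic rather than combinatorial: since $A^k$ is symmetric, $\mathbf{1}^T A^k \mathbf{1} = 2\sum_{i<j}(A^k)_{ij} + \operatorname{tr}(A^k)$, so one only needs $\operatorname{tr}(A^k)$ even for $k \ge 1$; and $\operatorname{tr}(A^k) = \sum_{i,j} A_{ij}(A^{k-1})_{ji} = \sum_{i\neq j} A_{ij}(A^{k-1})_{ij} = 2\sum_{i<j} A_{ij}(A^{k-1})_{ij}$, using $A_{ii}=0$ and symmetry of both $A$ and $A^{k-1}$. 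Your combinatorial version is more elaborate but gives a bijective explanation and pinpoints exactly where the loopless hypothesis enters; the algebraic version is a two-line computation. Both are fine.
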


Now, we are ready to obtain the constant coefficient of $\phi^G \pmod{2}$ when $n \equiv 2 \pmod{4}$. The proof of the next result is similar to the proof of Proposition~\ref{prop:2mod4_coefficients}.

\begin{proposition}\label{prop:obtain_cte_mod2_2mod4} Let $G$ be a graph with $n \geq 6$ vertices, where $n \equiv 2 \pmod{4}$. Then, the constant coefficient of $\phi^G \pmod{2}$ can be computed from the polynomial deck of $G$. 
\end{proposition}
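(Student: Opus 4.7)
The plan is to handle the ``remaining case'' that is not covered by Proposition~\ref{prop:2mod4_coefficients}: when $b_k$ is even for every $k \in \{2, 6, 10, \ldots, n-4\}$. Combined with Lemma~\ref{lem:cte_mod2_even}, this forces $b_k$ to be even for every $k \leq n-1$ with $k \not\equiv 0 \pmod 4$. The starting point is Proposition~\ref{prop:first_coeficients_mod_4}, which supplies the exact values of $\tr{A^k}$ for $k \leq n-1$, the parities $e_i^T A^k \mathbf{1} \pmod{2}$ for $k \leq n/2-1$ and every $i$, and $\mathbf{1}^T A^k \mathbf{1} \pmod{4}$ for $k \leq n/2-1$; applying Lemma~\ref{lem:main_mechanism_mod4} on top of this yields $\mathbf{1}^T A^j \mathbf{1} \pmod{4}$ for every even $j \in \{2, 4, \ldots, n-2\}$.

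The second step invokes the even-$n$ case of Lemma~\ref{lem:wang_walk_matrix2}, which rewrites as
\[A^{n/2}\mathbf{1} \equiv V_0 - b_n\mathbf{1} \pmod{2}\]
for a vector $V_0 \in \Fds_2^n$ that is explicitly computable from the above data (every $b_{2j}$ and every $A^{n/2-j}\mathbf{1}$ appearing in the formula is already known modulo $2$). Thus $A^{n/2}\mathbf{1} \pmod{2}$ is pinned down up to the single unknown bit $b_n \pmod{2}$.

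The third step computes $\mathbf{1}^T A^n \mathbf{1} \pmod{4}$ in two independent ways. First, the identity $\mathbf{1}^T A^n \mathbf{1} = \sum_i (e_i^T A^{n/2}\mathbf{1})^2$ combined with the fact that $x^2 \pmod{4}$ depends only on $x \pmod{2}$ gives $\mathbf{1}^T A^n \mathbf{1} \equiv t_0 \pmod{4}$ when $b_n$ is even and $\equiv 2-t_0 \pmod{4}$ when $b_n$ is odd, where $t_0 \defeq |\{i : (V_0)_i \text{ is odd}\}|$ and I have used $n \equiv 2 \pmod{4}$. Second, contracting $\phi^G(A)=0$ with $\mathbf{1}^T$ on the left and $\mathbf{1}$ on the right kills the odd-$k$ terms modulo $4$ (by Lemmas~\ref{lem:cte_mod2_even} and~\ref{lem:walks_even}); the even-$k$ terms with $k\leq n-2$ involve only quantities from the first paragraph, and the $k=n$ term is $-nb_n\equiv -2b_n\pmod{4}$, producing
\[\mathbf{1}^T A^n \mathbf{1} \equiv R - 2b_n \pmod{4}\]
for a computable $R \in \Zds_4$. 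Equating the two formulas, $b_n$ even forces $R \equiv t_0 \pmod{4}$ while $b_n$ odd forces $R \equiv -t_0 \pmod{4}$, so comparing the computable values of $R$ and $t_0$ modulo $4$ determines $b_n \pmod{2}$.

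The main obstacle is the ambiguous subcase in which \emph{both} matches hold simultaneously, namely $t_0$ even together with $R \equiv t_0 \pmod{4}$. To resolve it I would iterate the construction above: once $A^{n/2}\mathbf{1}\pmod{2}$ is known up to the single bit $b_n$, multiplying the Lemma~\ref{lem:wang_walk_matrix2} relation by $A^r$ propagates this description to $A^{n/2+r}\mathbf{1}\pmod{2}$ for every $r \geq 1$, as a computable vector minus a $b_n$-dependent correction. Feeding these back into Lemma~\ref{lem:main_mechanism_mod4} produces values of $\mathbf{1}^T A^j \mathbf{1}\pmod{4}$ for even $j > n-2$, and inserting them into the partition-identity framework of Proposition~\ref{prop:2mod4_coefficients} applied at $m \geq n+2$ (where partitions with two parts of size $n$ first contribute a $b_n^2$-term to $\tr{A^m}$) should yield an independent congruence on $b_n \pmod{2}$ that breaks the tie.
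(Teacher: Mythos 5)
The construction up to the comparison of the two formulas for $\mathbf{1}^T A^n\mathbf{1}\pmod 4$ is correct, but the comparison itself is provably vacuous, so the argument cannot close. By Lemma~\ref{lem:walks_even}, $\mathbf{1}^T A^n\mathbf{1}$ is even; since your first formula gives $\mathbf{1}^T A^n\mathbf{1}\equiv t_0\pmod 4$ (for $b_n$ even) or $\equiv 2-t_0\pmod 4$ (for $b_n$ odd), it forces $t_0$ to be even \emph{in every case}. But $t_0$ even is exactly your ``ambiguous subcase,'' so the tie you set out to break always occurs. One can also see it directly: writing $\sigma\defeq\sum_i (V_0)_i$, the first formula is $\mathbf{1}^T A^n\mathbf{1}\equiv t_0 + 2b_n(t_0+1)\pmod 4$, and equating with $R-2b_n\pmod 4$ gives $2b_n t_0\equiv R-t_0\pmod 4$; with $t_0$ even the left side vanishes and the relation degenerates to the consistency check $R\equiv t_0\pmod 4$, yielding no information about $b_n$. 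The deeper reason is that \emph{both} of your ingredients---the Lemma~\ref{lem:wang_walk_matrix2} relation and the Cayley--Hamilton contraction $\mathbf{1}^T\phi^G(A)\mathbf{1}=0$---ultimately produce a quantity known only up to the coset $\{0,2\}\pmod 4$, and these cosets never separate.

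The proposed iteration does not repair this: pushing to $\mathbf{1}^T A^{n+2r}\mathbf{1}$ via $A^r$ reproduces the same $\{0,2\}$ ambiguity, and the alternative Cayley--Hamilton evaluation of $\mathbf{1}^T A^{n+2r}\mathbf{1}$ requires walk counts $\mathbf{1}^T A^j\mathbf{1}\pmod 4$ for $j>n-2$ that you would again have to supply by the first mechanism, so the argument is circular. The paper escapes this trap by replacing the Cayley--Hamilton contraction with the much stronger identity of Theorem~\ref{thm:wang_series} (Equation~\eqref{eq:walks_reformulation}), which expresses $\mathbf{1}^T A^k\mathbf{1}\pmod 4$ in terms of $\tr A^{2k}$ and $\tr A^{k/2^l}$ scaled by $2^{-v_2(k)+l}$; this converts knowledge of $\mathbf{1}^T A^{n+2}\mathbf{1} + b_n\mathbf{1}^T A^2\mathbf{1}\pmod 4$ into knowledge of $\tr A^{2n+4}\pmod{2^{v_2(n+2)+2}}$ up to a $b_n$-correction, and then Theorem~\ref{thm:deep_power_sum_identity} is used to isolate the genuine, non-vanishing $b_n$-contribution in that trace. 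That scaling by negative powers of $2$ is precisely the extra arithmetic leverage your argument is missing: the integer identity $\mathbf{1}^T\phi^G(A)\mathbf{1}=0$ reduced mod $4$ only sees $b_n$ through the doubled term $nb_n$ and therefore loses one bit of precision exactly where you need it.
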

\begin{proof} First, note that by Theorem~\ref{thm:charac_derivative}, we know all the coefficients $-b_1$, $\dots$, $(-1)^n b_{n-1}$ of $\phi^G$, except for $b_n$. We also have all the information already available from Proposition~\ref{prop:first_coeficients_mod_4}. Moreover, by Lemma~\ref{lem:cte_mod2_even} and Proposition~\ref{prop:2mod4_coefficients}, we can assume that $b_k$ is even for every $k$ odd or in $\{2,6,10,\dots, n-4\}$.

By Lemma~\ref{lem:wang_walk_matrix2}, for every $i$ in $[n]$, we have  
\[
e_i^T A^{\frac{n}{2}+1} \mathbf{1} + b_2 e_i^T A^{\frac{n}{2}} \mathbf{1} + \cdots + b_n e_i^T A \mathbf{1} \equiv 0 \pmod{2}.
\]  
Since $b_2 \equiv 0 \pmod{2}$ and we already know $b_1, \dots, b_{n-1}$ as well as $e_i^T A^k \mathbf{1}$ for $k$ in $\{0,1, \dots, \frac{n}{2}-1\}$, it follows that we can determine  
\[
e_i^T A^{\frac{n}{2}+1} \mathbf{1} + b_n e_i^T A \mathbf{1} \pmod{2}.
\]

As a consequence, using the same procedure as in the proof of Lemma~\ref{lem:main_mechanism_mod4}, we can determine,
\begin{align*}
\sum_{i\in[n]}\left(e_i^T A^{\frac{n}{2}+1} \mathbf{1} + b_n e_i^T A \mathbf{1}\right)^2 &= \mathbf{1}^T A^{n+2}\mathbf{1}+2b_n\mathbf{1}^TA^{\frac{n}{2}+1}\mathbf{1}+b_n^2 \mathbf{1}^TA^2\mathbf{1}\\ &\equiv\mathbf{1}^T A^{n+2}\mathbf{1}+b_n^2 \mathbf{1}^TA^2\mathbf{1}\\ &\equiv\mathbf{1}^T A^{n+2}\mathbf{1}+b_n \mathbf{1}^TA^2\mathbf{1}\pmod{4},
\end{align*}
\noindent where, in the penultimate and last congruence, we use Lemma~\ref{lem:walks_even} and the fact that $b_n^2 \mathbf{1}^T A^2 \mathbf{1}\pmod{4}$ only depends on the parity of $b_n$. By Equation~\eqref{eq:walks_reformulation}, and since we already know $\tr{A^k}$ for $k$ in $\{0,1,\dots, n-1\}$, it follows that we can determine
\[
\tr{A^{2n+4}}+\tr{A^{n+2}}+b_n\left(\tr{A^4}+\tr{A^2}+2\tr{I}\right)2^{v_2(n+2)-1}\pmod{2^{v_2(n+2)+2}}.
\]

We now analyze each term in this equation. Note that, by Theorem~\ref{thm:deep_power_sum_identity},
\begin{align*}
\tr{A^4}+\tr{A^2}+2\tr{I} &=(2 b_2^2-4 b_4)+(-2 b_2)+2n\\&=2( b_2^2-2 b_4-b_2+n)\\&\equiv 2(-2 b_4-b_2+2)\pmod{8},
\end{align*}
\noindent where, in the last congruence, we used the fact that $b_2$ is even and $n\equiv 2\pmod{4}$. Note that in the formula for $\tr{A^{n+2}}$ given in Theorem~\ref{thm:deep_power_sum_identity}, as $b_1=0$ by Lemma~\ref{lem:cte_mod2_even}, the only term that contains $b_n$ corresponds to the partition $(n,2) \vdash n+2$ and is $-(n+2)b_2 b_n$. Since $b_1, \dots, b_{n-1}$ are already known, by the discussion above we know 
\[
\tr{A^{2n+4}} - (n+2) b_2 b_n + b_n(-2 b_4 - b_2 + 2) 2^{v_2(n+2)} \pmod{2^{v_2(n+2)+2}},
\]

As $b_2$ is even, $- (n+2) b_2 b_n-b_2 b_n2^{v_2(n+2)}\equiv 0\pmod{2^{v_2(n+2)+2}}$, and therefore, we know,
\begin{equation}\label{eq:last_trace_analysis}
\tr{A^{2n+4}} + b_n(-b_4+1) 2^{v_2(n+2)+1} \pmod{2^{v_2(n+2)+2}}.
\end{equation}

Now, we analyze the contribution of $b_n$ in the formula for $\tr{A^{2n+4}}$ given in Theorem~\ref{thm:deep_power_sum_identity}. This contribution arises from partitions $\lambda \vdash 2n+4$ where $r_n(\lambda)$ is either $1$ or $2$. 

First, consider the contribution of $b_n$ arising from partitions $\lambda \vdash 2n+4$ with $r_n(\lambda) = 1$. By an analysis analogous to the one presented in the proof of Proposition~\ref{prop:2mod4_coefficients}, the contribution of $b_n$ in this case is  
\[
b_n\sum_{\substack{\lambda\vdash n+4 \\ r_k(\lambda) = 0, \text{ if } 2 \nmid k\\ \text{ or } k=n}} (-1)^{k(\lambda)+1} (2n+4) \binom{k(\lambda)}{r_2(\lambda), \dots, r_{n-2}(\lambda)} \prod_{\substack{j=2 \\ 2\mid j}}^{n-2} b_j^{r_j(\lambda)}.
\]  
Since $n+4\equiv 2\pmod{4}$, every partition $\lambda \vdash n+4$ must contain either an odd part or a part congruent to $2\pmod{4}$. Given that $b_k$ is even for every $k$ in $\{2,6,10,\dots, n-4\}$, we conclude that the expression above is divisible by $2^{v_2(n+2)+2}$. Therefore, the contribution of $b_n$ to $\tr{A^{2n+4}}$ from partitions $\lambda \vdash 2n+4$ with $r_n(\lambda) = 1$ is $0\pmod{2^{v_2(n+2)+2}}$.

We now examine the contribution of $b_n$ to $\operatorname{tr}(A^{2n+4})$ from partitions $\lambda \vdash 2n+4$ where $r_n(\lambda) = 2$. Notice that every such partition $\lambda$ corresponds to a partition $\lambda' \vdash 4$ with $k(\lambda') = k(\lambda) - 2$. As a result, the contribution in this case is simply
\[
b_n^2\left((-1)^4\dfrac{2n+4}{3}\binom{3}{2,1} b_4+(-1)^5\dfrac{2n+4}{4}\binom{4}{2,2}b_2^2\right).
\]  
Since $b_2$ is even, we conclude that the contribution of $b_n$ to $\operatorname{tr}(A^{2n+4})$ from partitions $\lambda \vdash 2n+4$ with $r_n(\lambda) = 2$ is $(2n+4)b_n^2 b_4\pmod{2^{v_2(n+2)+2}}$. By Equation~\eqref{eq:last_trace_analysis}, we obtain that we know
\[
(2n+4)b_n^2 b_4 + b_n(-b_4+1) 2^{v_2(n+2)+1} \pmod{2^{v_2(n+2)+2}}.
\]
As a consequence, we can determine
\[
\dfrac{(2n+4)}{2^{v_2(2n+4)}}b_n^2 b_4 + b_n(-b_4+1) \pmod{2}.
\]
\noindent Since $b_n^2 \equiv b_n \pmod{2}$ and  
\[
\frac{(2n+4)}{2^{v_2(2n+4)}} b_n b_4 - b_n b_4 \equiv 0 \pmod{2},
\]
\noindent we conclude that we can determine $b_n \pmod{2}$.
\end{proof}

By combining Lemma~\ref{lem:cte_mod2_even} with Propositions~\ref{prop:obtain_cte_mod2_0mod4} and~\ref{prop:obtain_cte_mod2_2mod4}, we are now ready to prove Theorem~\ref{thm:main_theorem_4}.

\begin{proof}[Proof of Theorem~\ref{thm:main_theorem_4}] First, note that by Theorem~\ref{thm:charac_derivative}, we know the coefficients $-b_1$, $\dots$, $(-1)^n b_{n-1}$ of $\phi^G$, and by Lemma~\ref{lem:cte_mod2_even} and Propositions~\ref{prop:obtain_cte_mod2_0mod4} and~\ref{prop:obtain_cte_mod2_2mod4}, we also know $b_n\pmod{2}$. We also have all the information already available from Proposition~\ref{prop:first_coeficients_mod_4}.

Since we know the first $\lceil \frac{n}{2} \rceil$ coefficients of $w^G_i(x) \pmod{2}$ for every $i$ in $[n]$, we can determine the first $\lceil \frac{n}{2} \rceil$ columns of $W^G \pmod{2}$. By Lemma~\ref{lem:wang_walk_matrix2}, as we know $\phi^G \pmod{2}$ and $A^k \mathbf{1}$ for $k$ in $\{0, \dots, \lceil \frac{n}{2} \rceil-1\}$ we can compute $A^k \mathbf{1} \pmod{2}$ for every $k$ in $\Zds_{\geq 0}$. In particular, we can determine $W^G \pmod{2}$ and $w^G_i(x) \pmod{2}$ for every $i$ in $[n]$. Therefore, by Equation~\eqref{eq:godsil_mckay_mod4} and Propositions~\ref{prop:gen_basics_mod4} and~\ref{prop:gen_basics_mod4_2}, since we know $w^G_i(x)\pmod{2}$ and the first $n$ coefficients of $w_{i,i}^G(x)$, we can compute the first $n$ coefficients of $w^G(x) \pmod{4}$.

By Equation~\eqref{eq:all_walks} and Proposition~\ref{prop:gen_basics_2}, since we know the top $n$ coefficients of $\phi^G$ and the first $n$ coefficients of $w^G(x) \pmod{4}$, we can determine the top $n$ coefficients of $(-1)^n \phi^{\overline{G}}(-x-1) - \phi^G(x) \pmod{4}$. This allows us to compute the top $n$ coefficients of $\phi^{\overline{G}} \pmod{4}$.

By Lemma~\ref{lem:walks_even}, we know $w^G(x) \pmod{2}$. From Equation~\eqref{eq:all_walks} and Proposition~\ref{prop:gen_basics_2}, since we know $\phi^G \pmod{2}$, we can determine $(-1)^n \phi^{\overline{G}}(-x-1) - \phi^G(x) \pmod{2}$, and therefore, the constant coefficient of $\phi^{\overline{G}} \pmod{2}$. This finishes the proof.
\end{proof}

Let $\text{rank}_2(W^G)$ denote the rank of $W^G$ over $\mathbb{F}_2$. We observe that the proof of Proposition~\ref{prop:obtain_cte_mod2_2mod4}, and consequently Theorem~\ref{thm:main_theorem_4}, becomes simpler under the additional assumption that $\text{rank}_2(W^G) < \lceil \frac{n}{2} \rceil$. In this case, a strategy analogous to the one used in the proof of Theorem~\ref{thm:main_theorem_3} can be applied. Therefore, the main difficulty in proving Proposition~\ref{prop:obtain_cte_mod2_2mod4} lies in handling the case where $\text{rank}_2(W^G) = \lceil \frac{n}{2} \rceil$. As discussed in Section~\ref{sec:introduction}, and as we will soon see, this situation is analogous to the one in Theorem~\ref{thm:main_theorem_5}.

\begin{proof}[Proof of Theorem~\ref{thm:main_theorem_5}] First, note that all the information already available from Proposition~\ref{prop:first_coeficients_mod_4} and Theorem~\ref{thm:main_theorem_4}. In particular, note that as argued in the proof of Theorem~\ref{thm:main_theorem_4} we can compute the first $n$ coefficients of $w^G(x) \pmod{4}$. We claim that it is possible to determine the $(n+1)$--th coefficient of $w^G(x)\pmod{4}$, that is $\mathbf{1}^T A^n\mathbf{1}\pmod{4}$. 

If $n$ is even, then by Equation~\eqref{eq:walk}, we know $e_i^T A^{\frac{n}{2}} \mathbf{1} \pmod{2}$ for every $i$ in $[n]$. As a consequence, by Lemma~\ref{lem:main_mechanism_mod4}, we can determine $\mathbf{1}^T A^n \mathbf{1} \pmod{4}$, as desired. Therefore, assume that $n$ is odd and $\text{rank}_2(W^G) < \lceil \frac{n}{2} \rceil$. 

Note that, by Theorem~\ref{thm:main_theorem_4}, we have access to $W^G \pmod{2}$. Since $\text{rank}_2(W^G)$ is less than $\lceil \frac{n}{2} \rceil$, there is a linear relation between the columns $\{0, \dots, \frac{n-1}{2}\}$ of $W^G \pmod{2}$. By possibly multiplying this linear relation by $A$, we can assume that:
\[
v \coloneqq A^{\frac{n-1}{2}} \mathbf{1} + d_1 A^{\frac{n-3}{2}} \mathbf{1} + \cdots + d_{\frac{n-3}{2}} A \mathbf{1} + d_{\frac{n-1}{2}} \mathbf{1} \equiv 0 \pmod{2},
\]
for some $d_1, \dots, d_{\frac{n-1}{2}}$ in $\mathbb{F}_2$. Thus, we have $v^T A v \equiv 0 \pmod{4}$. Moreover, by Lemma~\ref{lem:walks_even}, $\mathbf{1}^T A^k \mathbf{1}$ is even for all $k \geq 1$, and we can simplify this expression to:  
\[
\mathbf{1}^T A^n \mathbf{1} + d_1^2 \mathbf{1}^T A^{n-2} \mathbf{1} + \cdots + d_{\frac{n-1}{2}}^2 \mathbf{1}^T A \mathbf{1} \equiv 0 \pmod{4}.
\]
\noindent From this final expression, we can now determine $\mathbf{1}^T A^n \mathbf{1} \pmod{4}$. This completes the proof of our claim.

By Equation~\eqref{eq:godsil_mckay_mod4} and Propositions~\ref{prop:gen_basics_mod4} and~\ref{prop:gen_basics_mod4_2}, since we know $w^G_i(x) \pmod{2}$ and the first $n+1$ coefficients of $w^G(x)$, we can compute the first $n+1$ coefficients of $w^G_{i,i}(x) \pmod{4}$. By Equation~\eqref{eq:closed_walk} and Proposition~\ref{prop:gen_basics_2}, since the first coefficient of $w^G_{i,i}(x) \pmod{4}$ is $1 \pmod{4}$, we can determine the constant coefficient of $\phi^G \pmod{4}$. Finally, by Equation~\eqref{eq:all_walks} and Proposition~\ref{prop:gen_basics_2}, as we know $\phi^G \pmod{4}$ and the first $n+1$ coefficients of $w^G(x) \pmod{4}$, we can determine $(-1)^n \phi^{\overline{G}}(-x-1) - \phi^G(x) \pmod{4}$, and therefore compute $\phi^{\overline{G}} \pmod{4}$.
\end{proof}

%%%%%%%%%%%%%%%%%%%%%%%%%%%%%%%%%%%%%%%%%%%%%%%%%%%%%%%

\section{Motivation and questions}\label{sec:questions}

We now focus on some of the motivation for this work, which comes from homomorphism counts. Let $Hom(F, G)$ denote the number of homomorphisms from the graph $F$ to the graph $G$. Define $Hom(\mathcal{F}, G)$ as the vector $(Hom(F, G))_{F \in \mathcal{F}}$, where $\mathcal{F}$ is a family of graphs. A well-known result by Lovász~\cite{lovasz1967operations} states that two graphs $G$ and $H$ are isomorphic if and only if $Hom(\mathcal{G}, G) = Hom(\mathcal{G}, H)$, where $\mathcal{G}$ is the family of all graphs. Other results in this context are also known~\cite{dell2018lovasz, grohe2022homomorphism}. Let $\mathcal{C}$, $\mathcal{P}$, and $\mathcal{T}$ represent the families of all cycles, paths, and trees, respectively. Then, we have the following:
\begin{itemize} 

\item $\phi^G = \phi^H$ if and only if $Hom(\mathcal{C}, G) = Hom(\mathcal{C}, H)$; 

\item $(\phi^G, \phi^{\overline{G}}) = (\phi^H, \phi^{\overline{H}})$ if and only if $Hom(\mathcal{C} \cup \mathcal{P}, G) = Hom(\mathcal{C} \cup \mathcal{P}, H)$; 

\item $G$ and $H$ are fractionally isomorphic if and only if $Hom(\mathcal{T}, G) = Hom(\mathcal{T}, H)$; 

\item $G$ and $H$ are isomorphic if and only if $Hom(\mathcal{G}, G) = Hom(\mathcal{G}, H)$. \end{itemize}

Given two families of graphs $\mathcal{F}$ and $\mathcal{H}$, we say that $Hom(\mathcal{F}, \cdot)$ is reconstructible from $Hom(\mathcal{H}, \cdot)$ if, for every graph $G$ with at least three vertices, $Hom(\mathcal{F}, G)$ is uniquely determined by $Hom(\mathcal{H}, G \setminus i)$ for every $i$ in $[n]$. If $Hom(\mathcal{F}, \cdot)$ is reconstructible from $Hom(\mathcal{F}, \cdot)$, we say that $Hom(\mathcal{F}, \cdot)$ is reconstructible.

In this context, the Ulam-Kelly reconstruction conjecture~\cite{kelly1942isometric} and polynomial reconstruction problem (Question~\ref{question:polynomial_reconstruction}) can be restated as:

\begin{conjecture}[Ulam-Kelly]\label{conj:reconstruction} $Hom(\mathcal{G}, \cdot)$ is reconstructible. \end{conjecture}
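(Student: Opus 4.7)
The statement is nothing other than the classical Ulam-Kelly reconstruction conjecture in disguise: by Lov\'asz's theorem, $Hom(\mathcal{G}, G)$ determines $G$ up to isomorphism, so reconstructing the full vector $Hom(\mathcal{G}, G)$ from the deck is equivalent to reconstructing $G$ itself. As this has resisted proof since 1941, any plan is necessarily a strategy rather than a complete argument.

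The natural first step is a Kelly-type averaging lemma for homomorphism counts. For a fixed graph $F$ with $|V(F)| < n$, each homomorphism $\varphi : F \to G$ restricts to a homomorphism $F \to G \setminus i$ whenever $i$ lies outside the image $\varphi(V(F))$, and hence is witnessed by exactly $n - |\varphi(V(F))| \geq n - |V(F)| \geq 1$ of the cards. Grouping homomorphisms by the size of their image and applying inclusion-exclusion over quotients of $F$, one can write $Hom(F, G)$ as a rational combination of $\sum_i Hom(F, G \setminus i)$ together with previously reconstructed values $Hom(F', G)$ for smaller $F'$. Induction on $|V(F)|$ then shows that $Hom(F, G)$ is reconstructible whenever $|V(F)| < n$. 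This alone already recovers many invariants, such as the degree sequence, counts of triangles and quadrangles, and all coefficients of $\phi^G$ other than the constant term, in agreement with Theorem~\ref{thm:charac_derivative}.

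The hard part, and the full content of the conjecture, is extending reconstructibility to graphs $F$ with $|V(F)| \geq n$. The averaging lemma collapses here because surjective homomorphisms $\varphi : F \to G$ are invisible to every card of the deck, and by Lov\'asz's theorem it is precisely these counts $\{Hom(F, G) : |V(F)| \geq n\}$ that distinguish non-isomorphic graphs sharing all smaller-image homomorphism counts. Tutte's reconstruction of the number of Hamiltonian cycles remains essentially the only nontrivial case in this regime that is currently understood. I expect any serious attempt would need either a new combinatorial identity that relates surjective homomorphism counts to deck data, or an entirely different structural route, perhaps leveraging generating-function techniques in the spirit of Section~\ref{sec:proof_main_theorem}; absent such a breakthrough, I see no realistic path to closing the gap, and this is exactly where the full difficulty of Ulam-Kelly is concentrated.
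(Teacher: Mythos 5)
You correctly identified that the statement is not a theorem but the open Ulam--Kelly reconstruction conjecture in disguise (via Lov\'asz's theorem that homomorphism counts from all graphs determine a graph up to isomorphism), and the paper likewise records it only as a \emph{conjecture} with no proof offered. So there is no paper proof to compare against, and your recognition of that fact is exactly right. Your sketch of a Kelly-type averaging lemma --- each homomorphism $\varphi\colon F\to G$ with $|V(F)|<n$ is counted by the $n-|\varphi(V(F))|\geq 1$ cards avoiding its image, so induction on $|V(F)|$ together with inclusion-exclusion over quotients recovers $Hom(F,G)$ for all $F$ with fewer than $n$ vertices --- is the standard and correct starting point, and your observation that the whole difficulty is concentrated in surjective (spanning) homomorphisms $F\to G$ with $|V(F)|\geq n$, which no card sees, is also the accurate diagnosis of why the conjecture remains open. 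To be clear, though, this is a correct situational assessment and a reasonable research strategy, not a proof; nothing in your write-up (or in the paper) establishes the conjecture, nor could it be expected to.
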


\begin{question}[Polynomial Reconstruction]\label{conj:polynomial_reconstruction} Is $Hom(\mathcal{C}, \cdot)$ reconstructible? \end{question}

Furthermore, Theorems~\ref{thm:hagos} and~\ref{thm:main_theorem} imply the following:

\begin{theorem} $Hom(\mathcal{C} \cup \mathcal{P}, \cdot)$ is reconstructible. \end{theorem}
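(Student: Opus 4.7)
The plan is to translate this statement into the language of characteristic polynomials by means of the equivalences listed in the itemized bullet points, and then invoke Theorem~\ref{thm:main_theorem} (or, equivalently, Hagos' Theorem~\ref{thm:hagos}). The claim is essentially a direct corollary of these results once the dictionary between homomorphism counts and characteristic polynomials is unpacked.

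More concretely, here is the outline. First, I would observe that for every $i \in V(G)$, the vector $Hom(\mathcal{C} \cup \mathcal{P}, G \setminus i)$ is (by the second bullet) in bijection with the pair $(\phi^{G \setminus i}, \phi^{\overline{G \setminus i}})$. Next, I would note the elementary identity $\overline{G \setminus i} = \overline{G} \setminus i$: deleting a vertex and then taking the complement yields the same graph as taking the complement and then deleting the vertex. Therefore, knowing $Hom(\mathcal{C} \cup \mathcal{P}, G \setminus i)$ for every $i \in [n]$ is equivalent to knowing the generalized polynomial deck $\{(\phi^{G \setminus i}, \phi^{\overline{G} \setminus i})\}_{i \in [n]}$.

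With the generalized polynomial deck in hand, I would apply Theorem~\ref{thm:main_theorem} (Hagos' theorem suffices, but Theorem~\ref{thm:main_theorem} gives the efficient version) to reconstruct the pair $(\phi^G, \phi^{\overline{G}})$. Translating back via the second bullet point, this pair is equivalent to $Hom(\mathcal{C} \cup \mathcal{P}, G)$, which is exactly what needs to be reconstructed.

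There is no real obstacle in this proof — the content lies entirely in Theorem~\ref{thm:main_theorem} and in the Lovász-style equivalence between homomorphism counts from cycles and paths and the pair of characteristic polynomials $(\phi^G, \phi^{\overline{G}})$. The only point to verify carefully is the small set-theoretic identity $\overline{G \setminus i} = \overline{G} \setminus i$, which is immediate from the definition of the complement. Accordingly, the proof will be only a few lines, consisting of chasing these equivalences and citing Theorem~\ref{thm:main_theorem}.
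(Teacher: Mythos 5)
Your proposal is correct and matches the paper's intended argument: the paper simply states that the theorem follows from Theorems~\ref{thm:hagos} and~\ref{thm:main_theorem} via the bullet-point dictionary, and your write-up fills in exactly the bookkeeping (the identity $\overline{G \setminus i} = \overline{G} \setminus i$ and the translation both ways between $Hom(\mathcal{C}\cup\mathcal{P},\cdot)$ and the pair $(\phi^{\cdot},\phi^{\overline{\cdot}})$) that the paper leaves implicit.
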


The main motivation for this work was to understand why $Hom(\mathcal{C} \cup \mathcal{P}, \cdot)$ is reconstructible, in contrast to $Hom(\mathcal{C}, \cdot)$ or $Hom(\mathcal{G}, \cdot)$, for which reconstructibility is not known.

These results and problems also motivate the following questions, which were also raised by Ramana, Scheinerman, and Ullman~\cite{ramana1994fractional, scheinerman2011fractional}:

\begin{question}\label{question:fractional_reconstruction_1} Is $Hom(\mathcal{T}, \cdot)$ reconstructible from $Hom(\mathcal{G}, \cdot)$? \end{question}

\begin{question}\label{question:fractional_reconstruction_2} Is $Hom(\mathcal{T}, \cdot)$ reconstructible? \end{question}

We note that a positive solution to either of these questions implies a positive solution to a long-standing question by Kocay~\cite[Conj. 5.1]{kocay2006some}, which asks whether the number of spanning trees by isomorphism type of a graph can be reconstructed from its deck. While Questions~\ref{question:fractional_reconstruction_1} and~\ref{question:fractional_reconstruction_2} might seem approachable using the techniques presented in Section~\ref{sec:proof_main_theorem}, along with recent results in~\cite{grohe2022homomorphism}, a recent work by Gonçalves and Thatte~\cite{gonccalves2023refinement} shows that when considering homomorphism counts of trees it is necessary to deal with the obstacle of pseudo-similar vertices.

%%%%%%%%%%%%%%%%%%%%%%%%%%%%%%%%%%%%%%%%%%%%%%%%%%%%%%%%%%%%%%%%%%%%%%%%%%%%%%%%

\section*{Acknowledgements}

We acknowledge fruitful conversations about the topic of this paper with Gabriel Coutinho and Chris Godsil.

%%%%%%%%%%%%%%%%%%%%%%%%%%%%%%%%%%%%%%%%%%%%%%%%%%%%%%%%%%%%%%%%%%%%%%%%%%%%%%%%
\bibliographystyle{plain}
\IfFileExists{references.bib}
{\bibliography{references.bib}}
{\bibliography{../references}}

%%%%%%%%%%%%%%%%%%%%%%%%%%%%%%%%%%%%%%%%%%%%%%%%%%%%%%%%%%%%%%%%%%%%%%%%%%%%%%%%
	
\end{document}